\newtheorem{theorem}{Theorem}[section]
\newtheorem{lemma}[theorem]{Lemma}
\newtheorem{definition}[theorem]{Definition}
\newtheorem{proposition}[theorem]{Proposition}
\newtheorem{remark}[theorem]{Remark}
\newtheorem{claim}{Claim}
\numberwithin{equation}{section}
\newcommand{\RR}{\mathbb R}
\renewcommand{\le}{\leqslant}
\renewcommand{\leq}{\leqslant}
\renewcommand{\ge}{\geqslant}
\renewcommand{\geq}{\geqslant}
\DeclareMathOperator*{\limm}{lim}
\DeclareMathOperator*{\minn}{min}
\DeclareMathOperator*{\supp}{supp}
\begin{document}
\hfill\today\bigskip

\title[A critical fractional equation with concave-convex nonlinearities]{A critical fractional equation with concave-convex power nonlinearities}

\thanks{B.B., E.C., and F.S. were partially supported by Research Project of MICINN-Spain (Ref. MTM2010-18128). E.C. was also partially supported by Research Project of MICINN-Spain (Ref. MTM2009-10878). R.S. was supported by
the MIUR National Research Project {\it Variational and Topological Methods
in the Study of Nonlinear Phenomena}, by the GNAMPA Project {\it Nonlocal Problems of Fractional Laplacian Type} and by the
ERC grant $\epsilon$ ({\it Elliptic Pde's and Symmetry of Interfaces and Layers
for Odd Nonlinearities}).}

\author[B. Barrios]{B. Barrios}
\address{Departamento de Matem\'aticas,
          Universidad Aut\'onoma de Madrid,
          28049 Madrid, Spain, And
         Instituto de Ciencias Matem\'aticas, (ICMAT-UAM-UC3M), C/Nicol\'as Cabrera, 15, 28049 Madrid, Spain}
\email{\tt bego.barrios@uam.es}

\author[E. Colorado]{E. Colorado}
\address{Departamento de Matem\'aticas,
         Universidad Carlos III de Madrid, 28911
         Legan\'es (Madrid), Spain, And
         Instituto de Ciencias Matem\'aticas, (ICMAT-UAM-UC3M), C/Nicol\'as Cabrera, 15, 28049 Madrid, Spain}
\email{\tt ecolorad@math.uc3m.es}

\author[R. Servadei]{R. Servadei}
\address{Dipartimento di Matematica e Informatica,
          Universit\`a della Calabria,
          Ponte Pietro Bucci 31 B, 87036 Arcavacata di Rende (Cosenza), Italy}
\email{\tt servadei@mat.unical.it}

\author[F. Soria]{F. Soria}
\address{Departamento de Matem\'aticas,
          Universidad Aut\'onoma de Madrid,
          28049 Madrid, Spain, And
         Instituto de Ciencias Matem\'aticas, (ICMAT-UAM-UC3M), C/Nicol\'as Cabrera, 15, 28049 Madrid, Spain}
\email{\tt fernando.soria@uam.es}

\keywords{Integrodifferential operators, fractional Laplacian, critical nonlinearities, convex-concave nonlinearities, variational techniques, Mountain Pass Theorem, Palais--Smale condition.\\
\phantom{aa} 2010 AMS Subject Classification: Primary:
49J35, 35A15, 35S15;
Secondary: 47G20, 45G05.}


\begin{abstract}
In this work we study the following fractional critical problem
$$
(P_{\lambda})=\left\{
\begin{array}{ll}
(-\Delta)^s u=\lambda u^{q} + u^{2^*_{s}-1}, \quad u{>}0 &  \mbox{in } \Omega\\
u=0 & \mbox{in } \RR^n\setminus \Omega\,,
\end{array}\right.
$$
where  $\Omega\subset \mathbb{R}^n$ is a regular bounded domain, $\lambda>0$, $0<s<1$ and $n>2s$. Here
$(-\Delta )^s$ denotes the fractional Laplace operator defined, up to a normalization factor, by
$$
-(-\Delta)^s u(x)={\rm P. V.}
\int_{\RR^n}\frac{u(x+y)+u(x-y)-2u(x)}{|y|^{n+2s}}\,dy,
\quad x\in \RR^n.
$$
Our main results show the existence and multiplicity of solutions to problem $(P_\lambda)$ for different values of $\lambda$. The dependency on this parameter changes according to whether we consider the concave power case ($0<q<1$) or the convex power case ($1<q<2^*_s-1$). These two cases will be treated separately.
\end{abstract}

\maketitle

\tableofcontents

\section{Introduction}\label{sec:introduzione}
In recent years, considerable attention has been given to nonlocal diffusion problems, in particular to the ones driven by the fractional Laplace operator. One of the reasons for this comes from the fact that this operator naturally arises in several physical phenomena like flames propagation and chemical reactions of liquids, in population dynamics and geophysical fluid dynamics, or  in mathematical  finance (American options). It also provides a simple model to describe certain jump L\'{e}vy processes in probability theory.  In all these cases, the nonlocal effect is modeled by the singularity at infinity.  For more details and  applications, see \cite{apli1, apli2, apli3, apli4, apli5, apli6} and the references therein.

\

In this paper we focus our attention on critical nonlocal fractional problems. To be more precise, we consider the following critical problem with convex-concave nonlinearities
$$
(P_{\lambda})=\left\{
\begin{array}{ll}
(-\Delta)^s u=\lambda u^q+u^{2^*_{s}-1} & {\mbox{ in }} \Omega,\\
u>0 & {\mbox{ in }} \Omega,\\
u=0 & {\mbox{ in }} \RR^n\setminus \Omega\,,
\end{array} \right.
$$
where $\Omega\subset \RR^n$ is a regular bounded domain, $\lambda>0$\,, $n>2s$, $0<q< 2^*_{s}-1$ and
\begin{equation}\label{2star}
2^*_{s}=\frac{2n}{n-2s},
\end{equation}
is the fractional critical Sobolev exponent. Here $(-\Delta)^s$ is the fractional Laplace operator defined, up to a normalization factor, by the Riesz potential as
\begin{equation} \label{2}
-(-\Delta)^s u(x):={\rm P. V.}
\int_{\RR^n}\frac{u(x+y)+u(x-y)-2u(x)}{|y|^{n+2s}}\,dy\,,
\,\,\,\,\, x\in \RR^n\,,
\end{equation}
where $s\in (0,1)$ is a fixed parameter
(see \cite[Chapter 5]{Stein} or \cite{valpal, sil} for further details).

One can also define a fractional power of the Laplacian using spectral decomposition. The same problem considered here but for this spectral fractional Laplacian has been treated in \cite{bcps}. Some related problems involving this operator have been studied in \cite{colorado1, cabre-tan, col-dep-san, tan}. As in  \cite{bcps} the purpose of this paper is to study the existence of weak solutions for $(P_{\lambda})$. Previous works related to the operator defined in \eqref{2}, or by a more general kernel, can be found in \cite{FallWeth,molicaBC, pp, quim-ros, sY, sBNRES, servadeivaldinociBN, servadeivaldinociBNLOW, servadeivaldinociCFP}.

Problems similar to $(P_\lambda)$ have been also studied in the local setting with different  elliptic operators. As far as we know, the first example in this direction was given in \cite{G-Peral} for the $p$-Laplacian operator. Other results, this time for the Laplacian (or essentially the classical Laplacian) operator can be found in \cite{AbCP,ABC,BEP,colorado}. More generally, the case of fully nonlinear operators has been studied in \cite{cha-col-per}.

It is worth noting here that the problem $(P_{\lambda})$, with $\lambda =0$, has no solution whenever $\Omega$ is a star-shaped domain. This has been proved in \cite{fsvnew,quim-ros2} using a Pohozaev identity for the operator $(-\Delta)^s$. This fact motivates the  perturbation term $\lambda u^q$, $\lambda>0$, in our work.

\

We now summarize the main results of the paper. First, in Section~\ref{sec:q<1} we look at the problem~$(P_{\lambda})$ in the concave case~$q<1$ and  prove the following.
\begin{theorem}\label{lapfra0q}
Assume $0<q<1$, $0<s<1$, and $n> 2s$.
Then, there exists $0<\Lambda <\infty$ such that problem~$(P_{\lambda})$
\begin{enumerate}
\item has no solution for $\lambda>\Lambda$;
\item has a minimal solution for any $0<\lambda <\Lambda$; moreover, the family of minimal solutions is
increasing with respect to $\lambda$;
\item if $\lambda=\Lambda$ there exists at least one solution;
\item for $0<\lambda<\Lambda$ there are at least two solutions.
\end{enumerate}
\end{theorem}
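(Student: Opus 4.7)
The plan is to adapt the Ambrosetti--Brezis--Cerami strategy to the fractional setting, in close parallel with \cite{bcps} which treats the spectral version. Let
\[
\Lambda:=\sup\{\lambda>0 : (P_\lambda)\text{ admits a weak solution}\},
\]
and prove (1)--(4) by establishing, in order, $0<\Lambda<\infty$, existence and monotonicity of minimal solutions below $\Lambda$, persistence at $\lambda=\Lambda$, and finally the existence of a second solution via a Mountain Pass argument for $\lambda<\Lambda$.

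For $\Lambda>0$ I would use a sub/super-solution scheme for small $\lambda$: the sub-solution is $\eps\varphi_1$, with $\varphi_1>0$ the first Dirichlet eigenfunction of $(-\Delta)^s$ on $\Omega$ and $\eps>0$ small; the super-solution is $Mw$ with $(-\Delta)^sw=1$ in $\Omega$, $w\equiv 0$ in $\RR^n\setminus\Omega$, and $M$ large. A monotone iteration then yields a weak solution. The bound $\Lambda<\infty$ is obtained by testing against $\varphi_1$: the identity $\lambda_1\int u\varphi_1=\int(\lambda u^q+u^{2^*_s-1})\varphi_1$, combined with the pointwise bound $\lambda t^q+t^{2^*_s-1}\ge c(\lambda)\,t$ (with $c(\lambda)\to\infty$ as $\lambda\to\infty$, by elementary minimization in $t$), rules out solutions for $\lambda$ too large. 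This proves (1). For (2), any solution $u_\mu$ at $\mu\in(0,\Lambda)$ is a super-solution of $(P_\lambda)$ for every $\lambda<\mu$; iterating from $\eps\varphi_1$ produces a minimal solution $u_\lambda\le u_\mu$, and the monotonicity of the family $\lambda\mapsto u_\lambda$ follows by comparing the iteration schemes.

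For (3), choose $\lambda_n\nearrow\Lambda$ with corresponding minimal solutions $u_n$, and test $(P_{\lambda_n})$ with $u_n$. Combined with the fractional Pohozaev identity of \cite{fsvnew,quim-ros2}, this gives a uniform bound on $\|u_n\|_{X_0(\Omega)}$ and on $\int u_n^{2^*_s}$. Passing to a weak limit, using strong convergence in subcritical Lebesgue spaces, and noting $u_n\ge u_\mu$ for any fixed $\mu<\Lambda$ (by the monotonicity established in (2)), we obtain a nontrivial weak solution at $\lambda=\Lambda$.

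The main obstacle is (4). I would search for a second solution of the form $u=u_\lambda+v$ with $v\ge 0$, $v\not\equiv 0$, by applying the Mountain Pass Theorem to the translated functional
\[
J_\lambda(v)=\frac{1}{2}\|v\|_{X_0(\Omega)}^2-\int_\Omega G(x,v^+)\,dx,
\]
where $G(x,\cdot)$ is the primitive of $g(x,t):=\lambda[(u_\lambda+t)^q-u_\lambda^q]+[(u_\lambda+t)^{2^*_s-1}-u_\lambda^{2^*_s-1}]$. The mountain pass geometry holds by standard arguments (with $v=0$ acting as the mountain pass ``origin'' since $u_\lambda$ is a local minimum of the untranslated functional), but the critical growth of $g$ makes the Palais--Smale condition fail at the fractional Sobolev threshold $c^*=\tfrac{s}{n}\,S_s^{n/(2s)}$, where $S_s$ is the sharp constant. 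The crux is therefore to produce a path whose maximum energy stays strictly below $c^*$: I would test along $t\mapsto t\,\eta\,u_\eps$, with $u_\eps$ a truncated fractional Sobolev extremal concentrated at an interior point of $\Omega$, as in the fractional Brezis--Nirenberg analysis of \cite{servadeivaldinociBN,servadeivaldinociBNLOW}. The favorable sign of the mixed terms involving $u_\lambda$ together with the extra contribution of the concave perturbation $\lambda u_\lambda^q$ should provide, uniformly for $\eps$ small, the required strict inequality. A Palais--Smale analysis below $c^*$ then yields a nonzero critical point of $J_\lambda$, and $u_\lambda+v$ is the second solution.
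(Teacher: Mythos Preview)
Your outline follows the Ambrosetti--Brezis--Cerami scheme and matches the paper's approach in spirit, but there is one substantive gap in part~(4): you assert that ``$v=0$ [acts] as the mountain pass `origin' since $u_\lambda$ is a local minimum of the untranslated functional'' without justifying this. That $u_\lambda$ is a local minimum of $\mathcal J_{s,\lambda}$ in the $X_0^s(\Omega)$--topology is precisely the delicate point, and it is \emph{not} a by-product of the sub/super-solution construction. What the monotone iteration gives you is that $u_\lambda$ is sandwiched between ordered barriers, which (together with the monotonicity in~$\lambda$) yields a local minimum only in a $C^0$--type topology. Bridging the gap to $X_0^s(\Omega)$ requires a fractional analogue of the Brezis--Nirenberg ``$H^1$ versus $C^1$ local minimizers'' theorem, and this is where most of the paper's work for~(4) actually lies. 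Concretely, the paper (i)~proves an $L^\infty$ bound for solutions via a Moser-type iteration (Proposition~\ref{prop:acotacion2}); (ii)~introduces the weighted space $\mathcal C_s(\Omega)=\{w\in C^0(\overline\Omega):\, w/\delta^s\in L^\infty\}$, with $\delta=\mathrm{dist}(\cdot,\partial\Omega)$, and uses the boundary regularity of Ros-Oton--Serra \cite{quim-ros} to show a separation lemma placing $u_\lambda$ in the $\mathcal C_s$--interior of an order interval (Lemma~\ref{separacion}); (iii)~deduces that $u_\lambda$ is a $\mathcal C_s$--local minimizer (Lemma~\ref{minimum}); and (iv)~proves the fractional ``$\mathcal C_s$ versus $X_0^s$'' local minimizer result (Proposition~\ref{alpha_vs_c1}), again relying on the regularity theory of \cite{quim-ros}. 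None of this is visible in your outline.

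A second, smaller point: the paper does not prove the Palais--Smale condition for the translated functional outright. Instead it argues by contradiction, assuming $0$ is the \emph{only} critical point of $\widetilde{\mathcal J}_{s,\lambda}$, and under that hypothesis establishes (PS)$_c$ for $c<c^*$ via the fractional concentration--compactness principle of \cite{pp} (Lemmas~\ref{cero}--\ref{strongly_convergent}); the contradiction then comes from the Mountain Pass Theorem (or its Ghoussoub--Preiss refinement when the minimax level equals $\widetilde{\mathcal J}_{s,\lambda}(0)$). Your sketch of the energy estimate along $t\mapsto t\eta_\varepsilon$ is correct in outline and matches Lemma~\ref{underlevel}; the ``favorable mixed terms'' you invoke are made precise there via the elementary inequality $(a+b)^p\ge a^p+b^p+\mu a^{p-1}b$ together with the estimates of \cite{servadeivaldinociBN}.
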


The convex case is treated in Section~\ref{sec:q>1}. The existence result for problem~$(P_{\lambda})$ is given by:
\begin{theorem}\label{lapfra0}
Assume $1<q<2^*_{s}-1$, $0<s<1$, and  $n>2s$.
Then, problem~$(P_{\lambda})$ admits at least one  solution provided that either
\begin{itemize}
\item $n>\frac{2s(q+3)}{q+1}$ and $\lambda>0$, or
\item $n\leq \frac{2s(q+3)}{q+1}$ and $\lambda$ is sufficiently large.
\end{itemize}
\end{theorem}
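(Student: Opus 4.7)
The plan is to apply the Mountain Pass Theorem to the energy functional
\[
J_\lambda(u)=\frac12\iint_{\RR^{2n}}\frac{|u(x)-u(y)|^2}{|x-y|^{n+2s}}\,dx\,dy-\frac{\lambda}{q+1}\int_\Omega (u^+)^{q+1}\,dx-\frac{1}{2^*_s}\int_\Omega (u^+)^{2^*_s}\,dx
\]
on the fractional Sobolev space $X_0^s(\Omega)$ of $H^s(\RR^n)$-functions vanishing outside $\Omega$. Since $q+1>2$ and $2^*_s>2$, the quadratic term dominates near the origin, so $\inf_{\|u\|_X=\rho}J_\lambda\ge\alpha>0$ for some small $\rho>0$; along any ray generated by a nontrivial nonnegative function, the critical term forces $J_\lambda\to-\infty$. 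This produces a well-defined mountain pass level
\[
c_\lambda=\inf_{\gamma\in\Gamma}\,\max_{t\in[0,1]}J_\lambda(\gamma(t))>0.
\]
Compactness is spoiled by the critical exponent, but a Brezis--Nirenberg-type concentration-compactness analysis, adapted to the integral fractional Laplacian, shows that $J_\lambda$ does satisfy the Palais--Smale condition below the threshold
\[
c^*:=\frac{s}{n}\,S^{n/(2s)},
\]
where $S$ is the best constant in the embedding $X_0^s(\Omega)\hookrightarrow L^{2^*_s}(\RR^n)$.

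The heart of the proof is to verify $c_\lambda<c^*$. I would test the mountain pass inequality along $t\mapsto t\,u_\eps$, where $u_\eps$ is a truncation of the fractional Aubin--Talenti extremal $U_\eps(x)=\eps^{-(n-2s)/2}U(x/\eps)$ centered at an interior point of $\Omega$. The known asymptotic estimates
\[
\|u_\eps\|_X^2=S^{n/(2s)}+O(\eps^{n-2s}),\qquad \|u_\eps\|_{L^{2^*_s}}^{2^*_s}=S^{n/(2s)}+O(\eps^n),
\]
together with the lower bound
\[
\int_\Omega u_\eps^{q+1}\,dx\ge C\,\eps^{n-\frac{(n-2s)(q+1)}{2}}
\]
(valid since $q+1<2^*_s$, which makes the exponent positive), yield after maximizing in $t$ the expansion
\[
\max_{t\ge0}J_\lambda(t u_\eps)\le c^*+O(\eps^{n-2s})-C\lambda\,\eps^{n-\frac{(n-2s)(q+1)}{2}}.
\]
A direct comparison of exponents shows $n-\tfrac{(n-2s)(q+1)}{2}<n-2s$ precisely when $n>\tfrac{2s(q+3)}{q+1}$; in that regime the $\lambda$-term beats the remainder for every $\lambda>0$ and small $\eps$. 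When the dimensional condition fails, the two terms are of the same order or the $\lambda$-term is weaker, so one has to take $\lambda$ large enough to still achieve the strict inequality.

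With $c_\lambda<c^*$ in hand, the Mountain Pass Theorem, combined with the $(PS)_{c_\lambda}$ condition, provides a nontrivial critical point $u\in X_0^s(\Omega)$ with $J_\lambda(u)=c_\lambda$. Pairing the weak formulation with $u^-\in X_0^s(\Omega)$ forces $\|u^-\|_X=0$, hence $u\ge 0$, and the strong maximum principle for $(-\Delta)^s$ then upgrades this to $u>0$ in $\Omega$. The most delicate point is clearly the asymptotic estimate in the middle paragraph: the fractional Aubin--Talenti profile has only polynomial decay at infinity, so the interplay between the truncation cut-off and the Gagliardo seminorm is markedly more subtle than in the local case, and one must track the two competing remainder terms carefully in order to extract the sharp threshold $n>\tfrac{2s(q+3)}{q+1}$.
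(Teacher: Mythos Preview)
Your proposal is correct and follows the same Mountain Pass strategy as the paper: verify the geometry of $J_\lambda$, establish the Palais--Smale condition below the threshold $c^*=\tfrac{s}{n}S^{n/(2s)}$ (the paper does this via Brezis--Lieb rather than concentration-compactness, but remarks that either works), and then test along the truncated fractional Aubin--Talenti profiles to force the mountain pass level under $c^*$, with the exponent comparison $n-\tfrac{(n-2s)(q+1)}{2}$ versus $n-2s$ yielding the dimensional threshold $n>\tfrac{2s(q+3)}{q+1}$.

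One point deserves care in the low-dimensional case $n\le \tfrac{2s(q+3)}{q+1}$: in your displayed expansion
\[
\max_{t\ge0}J_\lambda(t u_\eps)\le c^*+O(\eps^{n-2s})-C\lambda\,\eps^{\,n-\frac{(n-2s)(q+1)}{2}},
\]
the constant $C$ implicitly depends on a lower bound for the maximizer $t_{\eps,\lambda}$, and $t_{\eps,\lambda}\to 0$ as $\lambda\to\infty$, so ``take $\lambda$ large'' does not follow directly from this inequality. The paper sidesteps this by fixing $\eps>0$ and arguing from the critical-point equation that $t_{\eps,\lambda}\to 0$ as $\lambda\to\infty$, whence $\sup_{t\ge 0}J_\lambda(t\eta_\eps)\le \tfrac{t_{\eps,\lambda}^2}{2}\bigl(S+C\eps^{n-2s}\bigr)\to 0<c^*$; this gives the existence of a suitable $\lambda_s$ cleanly.
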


Theorem~\ref{lapfra0q} corresponds to the nonlocal version of the main result of \cite{ABC}, while Theorem~\ref{lapfra0} may be seen as the nonlocal counterpart of the results obtained for the standard Laplace operator in \cite[Subsection 2.3, 2.4 and 2.5]{bn}, (see also \cite[Theorem 3.2 and 3.3]{G-Peral} for the case of the $p$-Laplacian operator). Note, in particular, that when $s=1$ one has $2s(q+3)/(q+1)=2(q+3)/(q+1)<4$, due to the choice of $q>1$.

We will denote by $H^s(\RR^n)$ the usual fractional
Sobolev space endowed with the so-called \emph{Gagliardo norm}
\begin{equation}\label{gagliardonorm}
\|g\|_{H^s(\RR^n)}=\|g\|_{L^2(\RR^n)}+
\left(\int_{\RR^n\times \RR^n}\frac{\,\,\,|g(x)-g(y)|^2}{|x-y|^{n+2s}}\,dx\,dy\right)^{1/2}\,,
\end{equation}
while $X_0^s(\Omega)$ is the function space defined as
\begin{equation}\label{X0}
X_0^s(\Omega)=\big\{u\in H^s(\RR^n) : u=0\,\, \mbox{a.e. in}\,\, \RR^n\setminus \Omega\}\,.
\end{equation}
We refer to \cite{svmountain, servadeivaldinociBN} for a general definition of $X_0^s(\Omega)$ and its properties and to \cite{adams, valpal, mazya} for an account of the properties of $H^s(\RR^n)$.

In $X_0^s(\Omega)$ we can consider the following norm
$$
\|v\|_{X_0^s(\Omega)}=\left(\int_{\RR^n\times \RR^n}\frac{\,\,\,|v(x)-v(y)|^2}{|x-y|^{n+2s}}\,dx\,dy\right)^{1/2}.
$$
We also recall that $\left(X_0^s(\Omega), \|\cdot\|_{X_0^s(\Omega)}\right)$ is a Hilbert space, with scalar product
\begin{equation}\label{scalarproduct}
\langle u,v\rangle_{X_0^s(\Omega)}=\int_{\RR^n\times \RR^n}
\frac{\big( u(x)-u(y)\big) \big( v(x)-v(y)\big)}{|x-y|^{n+2s}}\,dx\,dy\,.
\end{equation}
See for instance \cite[Lemma~7]{svmountain}.

Observe that by \cite[Proposition 3.6]{valpal} we have the following identity
\begin{equation}\label{motorista}
\| u\|_{X_0^s(\Omega)}=\|(-\Delta)^{s/2}u\|_{L^2(\mathbb{R}^n)}.
\end{equation}
This leads us to establish as a definition that the solutions to our problem in this variational framework are those functions satisfying the relationship \eqref{def} below.

In our context, the  Sobolev constant is given by
\begin{equation}\label{Ss}
{\displaystyle S(n,s):=\inf_{v\in H^s(\RR^n)\setminus\{0\}}Q_{n,s}(v)>0,}
\end{equation}
where
$$
Q_{n,s}(v):=\frac{{\displaystyle\int_{\RR^n\times \RR^n}\frac{|v(x)-v(y)|^2}{|x-y|^{n+2s}}\,dx\,dy}}{{\displaystyle\left(\int_{\mathbb{R}^{n}} |v(x)|^{2^*_{s}}dx\right)^{2/2^*_{s}}}}\,, \quad v\in H^s(\mathbb{R}^n),
$$
is the associated Rayleigh quotient.
The constant $S(n,s)$ is well defined, as can be seen in \cite[Theorem 7.58]{adams}.

\

\subsection{Variational formulation of the problem}

\

Let us start describing the notion of solution in this context. In order to present the weak formulation of $(P_{\lambda})$ and taking into account that  we are looking for positive solutions, we will consider the following Dirichlet problem
$$
(P_{\lambda}^{+})=\left\{
\begin{array}{ll}
(-\Delta)^s u=\lambda (u_+)^q+(u_+)^{2^*_{s}-1} & {\mbox{ in }} \Omega,\\
u=0 & {\mbox{ in }} \RR^n\setminus \Omega\,,
\end{array} \right.
$$
where $u_+:=\max\{u,0\}$ denotes  the positive part of $u$. With this at hand, we  can now give the following.

\begin{definition}\label{problemalapfrac0}
We say that $u\in X_{0}^{s}(\Omega)$ is a weak solution of $(P_{\lambda}^{+})$ if for every $\varphi\in X_{0}^{s}(\Omega)$, one has
\begin{equation} \label{def}
\displaystyle \int_{\RR^n\times \RR^n}{\frac{(u(x)-u(y))(\varphi(x)-\varphi(y))}{|x-y|^{n+2s}}\,dx\,dy} =\lambda
\int_\Omega {(u_+)^q \varphi\,dx}+\int_\Omega {(u_+)^{2^*_{s}-1}\varphi\, dx.}
\end{equation}
\end{definition}

\

In the sequel we will omit the term {\it weak} when referring
to solutions that satisfy the conditions of Definition \ref{problemalapfrac0}.
The crucial observation here is that, by the Maximum Principle \cite[Proposition 2.2.8]{sil}, if $u$ is a solution of $(P_{\lambda}^{+})$ then $u$ is strictly positive in $\Omega$ and, therefore, it is also a solution of $(P_{\lambda})$.

\

To find solutions of ${(P_{\lambda}^+)}$, we will use a variational approach. Hence, we will associate a suitable functional to our problem. More precisely, the Euler--Lagrange functional related to problem~${(P_{\lambda}^+)}$ is given by
$\mathcal J_{s,\,\lambda}:X_0^s(\Omega)\to \RR$ defined as follows
$$\mathcal J_{s,\,\lambda}(u)=\frac 1 2 \int_{\RR^n\times \RR^n}\frac{|u(x)-u(y)|^2}{|x-y|^{n+2s}}\,dx\,dy-\frac{\lambda}{q+1} \int_\Omega (u_+)^{q+1}\, dx-\frac{1}{2^*_{s}}\int_\Omega (u_+)^{2^*_{s}}\,dx\,.$$
Note that $\mathcal{J}_{s,\,\lambda}$ is $\mathcal{C}^{1}$ and that its critical points correspond to solutions of ${(P_{\lambda}^+)}$.

\

In both cases, $q<1$ and $q>1$, we will use  the Mountain Pass Theorem (MPT) by Ambrosetti and Rabinowitz (see \cite{ar}). In order to do that, we will show that $\mathcal J_{s,\,\lambda}$ satisfies a compactness property and has suitable geometrical features. The fact that the functional has the suitable geometry is easy to check.  Observe that the embedding $X_0^s(\Omega)\hookrightarrow L^{2^*_{s}}(\RR^n)$ is not compact (see \cite{adams} ). This is even true when the nonlocal operator has  a more general kernel (see  \cite[Lemma 9-b)]{servadeivaldinociBN}). Hence, the difficulty to apply MPT lies on proving a local Palais--Smale (PS for short) condition at level $c\in \RR$ ((PS)$_c$).
Moreover, since the PS condition does not hold globally, we have to prove that the Mountain Pass critical level of $\mathcal J_{s,\,\lambda}$ lies below the threshold of application of the (PS)$_c$ condition.

In the concave setting, $q<1$, the idea is to prove the existence of at least two positive solutions for an admissible small range of $\lambda$. For that we are using a contradiction argument, inspired by \cite{ABC}. The proof is divided into several steps: we first show that we have a solution that is a local minimum for the functional $\mathcal J_{s,\,\lambda}$. In the next step, in order to find a second solution,  we suppose that this local minimum is the only critical point of the functional, and then we prove a local (PS)$_c$ condition for $c$ under a critical level related with the best fractional critical Sobolev constant given in \eqref{Ss}. Also we find a path under this critical level localizing the Sobolev minimizers at the possible concentration on Dirac Deltas. These Deltas are obtained by the concentration-compactness result in \cite[Theorem~1.5]{pp} inspired in the classical result by P.L. Lions in \cite{Lions1,Lions2}. Applying the MPT given in \cite{ar} and its refined version given in \cite{GhP}, we will reach a contradiction.

In the convex case $q>1$ we also apply the MPT  to obtain the existence of at least one solution for ${(P_{\lambda}^+)}$ for suitable values of $\lambda$ depending on the dimension $n$.  As before, we prove a local (PS)$_c$ condition in a appropriate range related with the constant $S(n,s)$ defined on \eqref{Ss}.
The strategy to obtain a  solution follows the ideas given in \cite{bn} (see also \cite{struwe, willem}) adapted to our nonlocal functional framework.

The linear case $q=1$, when the right hand side of the equation is equal to $\lambda u+|u|^{2^*_s-2}u$, was treated in \cite{sY, sBNRES, servadeivaldinociBN, servadeivaldinociBNLOW, servadeivaldinociCFP}. {In these works} the authors studied also nonlinearities more general than those given by the power critical function {as well as} the existence of solutions not necessarily positive.


\section{The critical and concave case $0<q<1$}\label{sec:q<1}
This section is devoted to the study of problem~$(P_{\lambda})$ in the case of the exponent $0<q<1$. We point out that the result of Theorem \ref{lapfra0q} in the subcritical case  could be obtained by the arguments given in this paper.  However, in this subcritical case the PS condition is easier to prove  -it is indeed satisfied for any energy level- and the separation of solutions, presented  in Lemma \ref{separacion} below, is not needed. This approach has been carried out in \cite{bmp} where the authors obtain the equivalent to Theorem \ref{lapfra0q} for a related problem using a technique developed in \cite{Alama}.

We begin with the following result that uses, in its proof,  a standard comparison method as well as some ideas given in \cite[Lemma 3.1 and Lemma 3.4]{ABC}.

\begin{lemma}\label{lem:>}
Let $0<q<1$ and let $\Lambda$ be defined by
\begin{equation}\label{Lambda}
\Lambda :=\sup\Big\{ \lambda >0\ :\ \mbox{problem~$(P_{\lambda})$ has solution}\Big\}.
\end{equation}

Then, $0<\Lambda <\infty$ and the critical concave problem~$(P_{\lambda})$ has at least one solution for every $0<\lambda\leq\Lambda$. Moreover, for $0<\lambda<\Lambda$ we get a family of minimal solutions increasing with respect to $\lambda$.
\end{lemma}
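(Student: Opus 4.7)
The plan is to verify the four assertions in the order they are stated, with existence at the threshold $\lambda = \Lambda$ being the most delicate point. Throughout let $f_\lambda(t) := \lambda t_+^q + t_+^{2^*_s-1}$ and let $(\lambda_1, \phi_1)$ denote the first Dirichlet eigenpair of $(-\Delta)^s$ on $\Omega$, so $\phi_1 > 0$ in $\Omega$ and $(-\Delta)^s \phi_1 = \lambda_1 \phi_1$. To establish $\Lambda > 0$ I would use the sub/super-solution method. For $\varepsilon > 0$ sufficiently small the function $\varepsilon \phi_1$ is a subsolution of $(P_\lambda)$, because $(-\Delta)^s(\varepsilon\phi_1) = \lambda_1 \varepsilon \phi_1 \leq \lambda\varepsilon^q \phi_1^q + \varepsilon^{2^*_s-1}\phi_1^{2^*_s-1}$ once $\varepsilon^{q-1}$ is large enough, a possibility created by the assumption $q-1 < 0$. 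A supersolution is provided by $\bar u := Mw$, where $w$ solves $(-\Delta)^s w = 1$ in $\Omega$ with $w = 0$ in $\RR^n\setminus\Omega$; for $\lambda$ and $M$ suitably small the pointwise inequality $M \geq \lambda(Mw)^q + (Mw)^{2^*_s-1}$ holds. A monotone iteration trapped between $\varepsilon\phi_1$ and $Mw$ then delivers a solution of $(P_\lambda)$.

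To prove $\Lambda < \infty$ I would test any positive solution $u$ of $(P_\lambda)$ against $\phi_1$ in the weak formulation \eqref{def}, which gives
\begin{equation*}
\lambda_1 \int_\Omega u\,\phi_1\,dx = \int_\Omega \bigl(\lambda u^q + u^{2^*_s-1}\bigr)\phi_1\,dx.
\end{equation*}
Since $0 < q < 1 < 2^*_s - 1$, the function $t \mapsto \lambda t^{q-1} + t^{2^*_s-2}$ attains a positive minimum on $(0,\infty)$ of order $\lambda^\alpha$ with $\alpha := (2^*_s-2)/(2^*_s-1-q)>0$, so $\lambda t^q + t^{2^*_s-1} \geq C_{n,s,q}\,\lambda^\alpha\, t$ for every $t > 0$. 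Plugged into the integral identity this forces $\lambda_1 \geq C_{n,s,q}\,\lambda^\alpha$ and hence $\lambda \leq (\lambda_1/C_{n,s,q})^{1/\alpha}$.

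For $\lambda \in (0,\Lambda)$ fixed I would pick $\mu \in (\lambda,\Lambda)$ with $(P_\mu)$ admitting a solution $\tilde u$. Since $f_\lambda < f_\mu$ on $(0,\infty)$, $\tilde u$ is a supersolution of $(P_\lambda)$; combined with the subsolution $\varepsilon\phi_1$, the monotone iteration $(-\Delta)^s u_{k+1} = f_\lambda(u_k)$, $u_0 = \varepsilon\phi_1$, produces an increasing sequence bounded above by $\tilde u$ whose limit $u_\lambda$ solves $(P_\lambda^+)$ and hence $(P_\lambda)$. Any other solution $v$ satisfies $v \geq c\phi_1$ by a nonlocal Hopf-type bound, so for $\varepsilon$ small the iteration stays $\leq v$, giving $u_\lambda \leq v$ and the minimality of $u_\lambda$. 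Monotonicity in $\lambda$ is then immediate: for $\lambda < \mu$ the minimal $u_\mu$ is itself a supersolution of $(P_\lambda)$, and the same iteration against $u_\mu$ as barrier gives $u_\lambda \leq u_\mu$.

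The main obstacle is existence at $\lambda = \Lambda$. Taking $\lambda_n \nearrow \Lambda$ and $u_n := u_{\lambda_n}$, monotonicity provides the pointwise limit $u_n \nearrow u_*$, and the decisive step is a uniform $X_0^s(\Omega)$-bound on $\{u_n\}$. I would derive it from the semi-stability of the minimal solution, i.e.\ the nonnegativity of the quadratic form
\begin{equation*}
Q_n(\varphi) := \|\varphi\|_{X_0^s(\Omega)}^2 - q\lambda_n \int_\Omega u_n^{q-1}\varphi^2\,dx - (2^*_s-1)\int_\Omega u_n^{2^*_s-2}\varphi^2\,dx
\end{equation*}
on $X_0^s(\Omega)$, the nonlocal analogue of the stability property exploited in \cite{ABC}. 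Testing $Q_n(u_n) \geq 0$ and subtracting the identity coming from testing \eqref{def} against $u_n$ yields
\begin{equation*}
(2^*_s-2)\int_\Omega u_n^{2^*_s}\,dx \leq (1-q)\lambda_n \int_\Omega u_n^{q+1}\,dx,
\end{equation*}
after which H\"older's inequality with exponents $2^*_s/(q+1)$ and $2^*_s/(2^*_s-q-1)$ converts this into a uniform $L^{2^*_s}$-bound on $u_n$ and, via \eqref{def} tested with $u_n$ once more, on $\|u_n\|_{X_0^s(\Omega)}$. Passing to a weak limit and invoking the Monotone Convergence Theorem one can pass to the limit in \eqref{def}, obtaining a solution of $(P_\Lambda^+)$ and therefore of $(P_\Lambda)$ by the Maximum Principle. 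The hard part is precisely establishing the semi-stability of the minimal solution in this nonlocal concave-convex setting, which is where I expect most of the actual work to concentrate.
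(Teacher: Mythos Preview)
Your proposal is correct and follows essentially the same route the paper has in mind: the authors do not spell out the argument but refer to \cite[Lemma~3.1 and Lemma~3.4]{ABC}, which are precisely the semi-stability of the minimal solution and the passage to the extremal parameter that you outline. The sub/supersolution construction, the eigenfunction test for $\Lambda<\infty$, the monotone iteration yielding the minimal family, and the $X_0^s$-bound at $\lambda=\Lambda$ derived from $Q_n(u_n)\ge 0$ are all the standard ABC ingredients transported to the nonlocal setting, and the Hopf-type lower barrier you invoke is exactly what the paper later uses (see Lemma~\ref{separacion} and the references to \cite{crs,quim-ros}). One small wording issue: in the supersolution step you cannot take $M$ and $\lambda$ small independently, since $M^{q-1}\to\infty$ as $M\to 0$; the correct order is to fix $M$ small first (controlling the critical term) and then restrict $\lambda$ accordingly, but this does not affect the argument.
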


By Lemma~\ref{lem:>} we easily deduce statements~$(1)-(3)$ of Theorem~\ref{lapfra0q}. Hence, in the sequel we focus on proving statement~$(4)$ of that theorem, that is on the existence of a second solution for~$(P_{\lambda})$.

First we prove a regularity result which will be useful in certain parts of this section:
\begin{proposition}\label{prop:acotacion2}
Let $u$ be a positive solution to the problem
\begin{equation*}
\left\{
    \begin{array}{rcl@{\quad}l}
    (-\Delta)^{s}u & = & f(x, u)&\mbox{ in }\  \Omega,\\
    u & = & 0&\mbox{ in }\  \mathbb{R}^n\setminus \Omega,
    \end{array}
    \right.
    \end{equation*}
    and assume that $|f(x,t)|\le C(1+|t|^p)$, for some $1\leq p\leq 2^*_s-1$ and $C>0$.
    Then $u\in L^\infty(\Omega)$.

\end{proposition}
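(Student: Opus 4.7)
The plan is to perform a Moser-type iteration with truncated test functions, in the spirit of Brezis--Kato. Since $u\in X_0^s(\Omega)\hookrightarrow L^{2^*_s}(\Omega)$ and $|f(x,u)|\leq C(1+u^p)$ with $1\leq p\leq 2^*_s-1$, it will be enough to prove that $u\in L^r(\Omega)$ for every $r\in[1,\infty)$; the $L^\infty$ bound will then follow from the standard regularity theory for the fractional Laplacian (see \cite{sil}), applied to the equation $(-\Delta)^s u=f(x,u)$ once the right-hand side is known to lie in $L^r$ for every $r<\infty$.

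For parameters $\beta\geq 0$ and $T>0$, set $u_T:=\min\{u,T\}$ and use
\[
\varphi:=uu_T^{2\beta}\in X_0^s(\Omega)
\]
as test function in \eqref{def}; note that $\varphi$ is admissible because $u_T^{2\beta}\leq T^{2\beta}$ and composition with a bounded Lipschitz function preserves $X_0^s(\Omega)$. The key algebraic inequality is
\[
(a-b)\bigl(aa_T^{2\beta}-bb_T^{2\beta}\bigr)\geq c_\beta\bigl(aa_T^\beta-bb_T^\beta\bigr)^2,\qquad a,b\geq 0,
\]
with $a_T:=\min\{a,T\}$ and some $c_\beta>0$; it is a Cauchy--Schwarz type consequence of the chain rule for the truncated power. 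Applied pointwise with $a=u(x)$, $b=u(y)$ and integrated against $|x-y|^{-n-2s}$, it turns the left-hand side of \eqref{def} into a lower bound for $\|uu_T^\beta\|_{X_0^s(\Omega)}^2$; combined with the Sobolev inequality \eqref{Ss} this yields
\[
\|uu_T^\beta\|_{L^{2^*_s}(\Omega)}^2\leq C_\beta\int_\Omega f(x,u)\,uu_T^{2\beta}\,dx.
\]

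Using $|f(x,u)|\leq C(1+u^{2^*_s-2})$, the right-hand side is bounded by a harmless term plus $C\int_\Omega u^{2^*_s-2}(uu_T^\beta)^2\,dx$. The latter is treated by the Brezis--Kato splitting $\Omega=\{u\leq K\}\cup\{u>K\}$: on $\{u\leq K\}$ the factor $u^{2^*_s-2}$ is bounded by $K^{2^*_s-2}$, while on $\{u>K\}$ Hölder's inequality with exponents $2^*_s/2$ and $n/(2s)$ produces a prefactor
\[
\|u\|_{L^{2^*_s}(\{u>K\})}^{2^*_s-2}\,\|uu_T^\beta\|_{L^{2^*_s}(\Omega)}^2,
\]
and since $u\in L^{2^*_s}(\Omega)$, the prefactor can be made smaller than $1/(2C_\beta)$ by choosing $K$ large, so the term is absorbed into the left-hand side. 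Letting $T\to\infty$ by monotone convergence yields
\[
\|u\|_{L^{2^*_s(\beta+1)}(\Omega)}^{2(\beta+1)}\leq \widetilde C_\beta\left(1+\int_\Omega u^{2(\beta+1)}\,dx\right)
\]
whenever $u\in L^{2(\beta+1)}(\Omega)$. Starting from $\beta_0$ with $2(\beta_0+1)=2^*_s$ and iterating via $2(\beta_{k+1}+1)=2^*_s(\beta_k+1)$, the exponent $r_k=2^*_s(\beta_k+1)$ tends to $+\infty$, so $u\in L^r(\Omega)$ for every $r<\infty$, and the conclusion follows.

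The main obstacle is the very first step of the iteration in the critical-growth case $p=2^*_s-1$: a naive use of Hölder cannot close the estimate because the $L^{2^*_s}$-norm of $uu_T^\beta$ appears on both sides multiplied by $\|u\|_{L^{2^*_s}}^{2^*_s-2}$, which is not a priori small. The Brezis--Kato absorption step, which relies only on the absolute continuity of $K\mapsto\int_{\{u>K\}}u^{2^*_s}$, is precisely what unlocks this first step; after it, the iteration reduces to a standard Moser scheme with constants depending on $\beta$ but finite at each step.
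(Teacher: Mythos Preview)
Your approach is the same Moser--Br\'ezis--Kato iteration as the paper's: both truncate a power of $u$ to obtain an admissible test function, both use the splitting $\{u\le K\}\cup\{u>K\}$ together with the absolute continuity of $\int u^{2^*_s}$ to absorb the critical term at the first step, and both then iterate geometrically. Two minor differences are worth recording. First, the paper works through the pointwise convexity inequality $(-\Delta)^s\varphi(u)\le\varphi'(u)(-\Delta)^s u$ with a $C^1$ truncated power $\varphi_{T,\beta}$, while you test directly with $uu_T^{2\beta}$ and a discrete algebraic inequality on the Gagliardo integrand; these are interchangeable variants. Second, and more substantively, the paper tracks the growth of the iteration constants and extracts $\|u\|_{L^\infty}\le C_0A_1$ directly from the Moser scheme, whereas you stop at $u\in L^r$ for all $r<\infty$ and then invoke linear regularity for the fractional Dirichlet problem; this saves the constant bookkeeping at the price of an external reference. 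One slip: the hypothesis gives $|f(x,u)|\le C(1+u^{2^*_s-1})$, not exponent $2^*_s-2$; with the correct exponent your displayed term $C\int_\Omega u^{2^*_s-2}(uu_T^\beta)^2\,dx$ is exactly right, since $u^{2^*_s-1}\cdot uu_T^{2\beta}=u^{2^*_s-2}(uu_T^\beta)^2$.
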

\begin{proof}

The proof uses standard techniques for the fractional Laplacian, in particular the following inequality: if $\varphi$ is a convex and differentiable function, then
$$
(-\Delta)^{s}\varphi(u)\le \varphi'(u) \, (-\Delta)^{s}u.
$$
Let us define, for $\beta \ge 1$ and $T>0$ large,
$$
\varphi(t)=\varphi_{T,\beta}(t)=\left\{
\begin{array}{ll}
0, \quad&{\rm if} \quad t\leq0\\[2mm]
t^\beta, \quad&{\rm if} \quad 0<t<T\\[2mm]
\beta T^{\beta-1} (t-T)+T^\beta, \quad&{\rm if} \quad t\ge T.
\end{array}\right.
$$
 Observe that $\varphi(u) \in X_0^s(\Omega)$ since $\varphi$ is Lipschitz with constant $K=\beta T^{\beta-1} $ and, therefore,
 $$
\begin{array}{ll}
\| \varphi(u)\|_{X_0^s(\Omega)} &=\displaystyle
\left( \int_{\mathbb R^{n}\times\mathbb R^{n}}\frac{\,\,\,|\varphi(u(x))-\varphi(u(y))|^2}{|x-y|^{n+2s}}\,dx\,dy\right)^{1/2}\\[2mm]
&\le \displaystyle  \left( \int_{\mathbb R^{n}\times\mathbb R^{n}}\frac{\,\,\,K^2|u(x)-u(y)|^2}{|x-y|^{n+2s}}\,dx\,dy\right)^{1/2}=K
\| u\|_{X_0^s(\Omega)}.
\end{array}
$$
By \eqref{motorista} and the Sobolev embedding theorem given in \cite[Theorem 7.58]{adams}, we have
\begin{equation}\label{a}
\int_{\Omega} \varphi(u)(-\Delta)^{s}\varphi(u)=\| \varphi(u)\|^2_{X_0^s(\Omega)}\geq S(n,s)
 \| \varphi(u) \|^2_{L^{2^*_s}(\Omega)},
\end{equation}
where $S(n,s)$ is defined in \eqref{Ss}.
On the other hand, since $\varphi$ is convex, and $\varphi(u)\varphi'(u)\in X_{0}^{s}(\Omega)$,
$$ \int_\Omega \varphi(u)(-\Delta)^{s}\varphi(u)\leq \int_\Omega \varphi(u)\varphi'(u) \, (-\Delta)^{s}u \le C \int_\Omega  \varphi(u)\varphi'(u)\, \left(1+u^{2^*_s-1}\right). $$
From \eqref{a} and the previous inequality we get the following basic estimate:
\begin{equation}  \label{one}
 \| \varphi(u) \|^2_{L^{2^*_s}(\Omega)} \le C \int_\Omega  \varphi(u)\varphi'(u)\, \left(1+u^{2^*_s-1}\right).
\end{equation}

\

\noindent Since $u\,\varphi'(u)\le \beta \, \varphi(u)$ and $\varphi'(u)\le \beta\, (1+\varphi(u))$, the above estimate (\ref{one}) becomes
\begin{equation} \label{two}
\left(\int_\Omega \left(\varphi(u)\right)^{2^*_s}\right)^{2/{2^*_s}} \le C\, \beta \,
\left(1+\int_\Omega \left(\varphi(u)\right)^{2}+\int_\Omega \left(\varphi(u)\right)^{2}u^{2^*_s-2} \right).
\end{equation}
It is important to point out here that since $\varphi(u)$ grows linearly, both sides of (\ref{two}) are finite.

\

\noindent {\bf Claim}: Let $\beta_1$ be such that $2\beta_1=2^*_s$. Then $u\in L^{\beta_1 \, 2^*_s}$.

\

\noindent To see this, we take R large to be determined later. Then, H\"older's inequality with $p=\beta_1=2^*_s/2$ and $p'=2^*_s/(2^*_s-2)$ gives
$$
\begin{array}{ll}
\displaystyle \int_\Omega \left(\varphi(u)\right)^{2}u^{2^*_s-2}
&= \displaystyle \int_{\{u\le R\}} \left(\varphi(u)\right)^{2}u^{2^*_s-2}+ \int_{\{u>R\}} \left(\varphi(u)\right)^{2}u^{2^*_s-2}\\[5mm]
& \le \displaystyle \int_{\{u\le R\}} \left(\varphi(u)\right)^{2}R^{2^*_s-2}\\[5mm]
&+ \displaystyle \left(\int_\Omega \left(\varphi(u)\right)^{2^*_s}\right)^{2/{2^*_s}} \left(\int_{\{u>R\}} u^{2^*_s}\right)^{(2^*_s-2)/2^*_s}.
\end{array}
$$
By the Monotone Convergence Theorem, we may take $R$ so that
$$
\left(\int_{\{u>R\}} u^{2^*_s}\right)^{(2^*_s-2)/2^*_s}\le \frac 1{2 \, C\, \beta_1}.
$$
In this way, the second term above is absorbed by the left hand side of (\ref{two}) to get
\begin{equation} \label{three}
\left(\int_\Omega \left(\varphi(u)\right)^{2^*_s}\right)^{2/{2^*_s}} \le 2\, C\, \beta_1 \,
\left(1+\int_\Omega \left(\varphi(u)\right)^{2}+\int_{\{u\le R\}} \left(\varphi(u)\right)^{2}R^{2^*_s-2} \right).
\end{equation}
Using that $\varphi_{T,\beta_1}(u)\leq u^{\beta_1}$  in the right hand side of (\ref{three}) and then letting $T\longrightarrow \infty$ in the left hand side, since $2\beta_1=2^*_s$, we obtain
\begin{equation*}
\left(\int_\Omega u^{2^*_s\beta_1}\right)^{2/{2^*_s}} \le 2\, C\, \beta_1 \,
\left(1+\int_\Omega u^{2^*_s}+R^{2^*_s-2} \int_\Omega u^{2^*_s}\right)<\infty.
\end{equation*}
This proves the claim.

\

We now go back to inequality (\ref{two}) and we use as before  that $\varphi_{T,\beta}(u)\leq u^{\beta}$ in the rigth hand side and then we take $T\longrightarrow \infty$ in the left hand side. Then,
\begin{equation*}
\left(\int_\Omega u^{2^*_s\beta}\right)^{2/{2^*_s}} \le C\, \beta \,
\left(1+\int_\Omega u^{2\beta}+\int_\Omega u^{2\beta+2^*_s-2} \right).
\end{equation*}
Since $\displaystyle \int_\Omega u^{2\beta}\le |\Omega|\,+ \int_\Omega u^{2\beta+2^*_s-2}$,
we get  the following recurrence formula
$$
\left(\int_\Omega u^{2^*_s\beta}\right)^{2/{2^*_s}}  \le
2\,  C\, \beta \, (1+|\Omega|) \left(1+\int_\Omega u^{2\beta+2^*_s-2} \right).
$$
Therefore,
\begin{equation}\label{c}
\left(1+\int_\Omega u^{2^*_s\beta}\right)^{\frac{1}{2^*_s(\beta-1)}}  \le C_{\beta}^{\frac{1}{2(\beta-1)}} \left(1+\int_\Omega u^{2\beta+2^*_s-2} \right)^{\frac{1}{2(\beta-1)}},
\end{equation}
where $C_{\beta}=4\,  C\, \beta \, (1+|\Omega|)$.

\noindent For $m\ge 1$ we define $\beta_{m+1}$ inductively so that $2\beta_{m+1}+ 2^*_s-2=2^*_s\beta_m$, that is
$$\beta_{m+1}-1=\frac {2^*_s} 2(\beta_m-1)=\left(\frac {2^*_s} 2\right)^m(\beta_1-1).$$
Hence, from \eqref{c} it follows that
$$
\left(1+\int_\Omega u^{2^*_s\beta_{m+1}}\right)^{\frac{1}{2^*_s(\beta_{m+1}-1)}}  \le C_{\beta_{m+1}}^{\frac{1}{2(\beta_{m+1}-1)}} \left(1+\int_\Omega u^{2^*_s\beta_m} \right)^{\frac{1}{2^*_s(\beta_m-1)}},
$$
with $C_{m+1}:=C_{\beta_{m+1}}=4\,  C\, \beta_{m+1} \, (1+|\Omega|)$.
Then, defining for $m\geq 1$
$$A_m:=\left(1+\int_\Omega u^{2^*_s\beta_m}\right)^{\frac{1}{2^*_s(\beta_m-1)}},$$
by the Claim proved before, and using a limiting argument, we conclude that there exists $C_0>0$, independent of $m>1$, such that
$$A_{m+1}\leq \prod_{k=2}^{m+1}{C_{k}^{\frac{1}{2(\beta_k-1)}}}A_1\leq C_0A_1.$$
This implies that $\|u\|_{L^\infty(\Omega)}\leq  C_0A_1.$
 \end{proof}


Coming back to the proof Theorem~\ref{lapfra0q}, as we said in the Introduction, to find the existence of the second solution, we first show that the minimal solution ${u_{\lambda}>0}$ given by Lemma~\ref{lem:>} is a local minimum for the functional $\mathcal J_{s,\,\lambda}$. For that, following the ideas given in \cite{colorado} we establish a separation lemma in the topology of the class
\begin{equation}\label{space}
\mathcal{C}_{s}(\Omega):=\left\{w\in \mathcal{C}^{0}(\overline{\Omega}): \, \|w\|_{\mathcal{C}_{s}(\Omega)}:=\left\|\frac{w}{\delta^{s}}\right\|_{L^{\infty}(\Omega)}<\infty\right\},
\end{equation}
where $\delta(x)=\rm{dist} (x,\partial\Omega)$.
Then we have the following.
\begin{lemma}\label{separacion}
Assume $0<\lambda_1<\lambda_0<\lambda_2<\Lambda$. Let $u_{\lambda_1}$,
$u_{\lambda_0}$ and $u_{\lambda_2}$ be the corresponding minimal
solutions to $(P_{\lambda})$, for $\lambda=\lambda_1,\,\lambda_0$ and
$\lambda_2$ respectively. If
$$Z=\{u\in \mathcal{C}_s(\Omega)|
\,u_{\lambda_{1}}\leq u \leq u_{\lambda_{2}} \},$$ then there exists
$\varepsilon
>0$ such that
$$
\{u_{\lambda_0}\} + \varepsilon B_{1} \subset Z,
$$
with $B_{1}=\{w\in \mathcal{C}^{0}(\overline{\Omega}): \,\|\frac{w}{\delta^{s}}\|_{L^{\infty}(\Omega)}<1\}.$
\end{lemma}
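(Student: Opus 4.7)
\noindent\emph{Proof proposal.} The plan is to reduce the separation in the $\mathcal{C}_s(\Omega)$ topology to a fractional Hopf-type boundary estimate on the gaps between consecutive minimal solutions. Set
\[
v_1 := u_{\lambda_0}-u_{\lambda_1},\qquad v_2 := u_{\lambda_2}-u_{\lambda_0};
\]
by the monotonicity of the minimal family stated in Lemma \ref{lem:>}, both $v_1,v_2$ belong to $X_0^s(\Omega)$ and are nonnegative in $\Omega$. The whole statement will follow at once if one can exhibit constants $c_1,c_2>0$ such that $v_i(x)\geq c_i\,\delta(x)^s$ in $\Omega$ for $i=1,2$, because then for any $w\in B_1$ one has $|w(x)|\leq \|w/\delta^s\|_{L^\infty(\Omega)}\,\delta(x)^s < \delta(x)^s$, so picking $0<\varepsilon<\min\{c_1,c_2\}$ yields
\[
u_{\lambda_1}=u_{\lambda_0}-v_1\leq u_{\lambda_0}+\varepsilon w\leq u_{\lambda_0}+v_2=u_{\lambda_2},
\]
that is, $u_{\lambda_0}+\varepsilon w\in Z$.

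\medskip
First I would subtract the weak formulations of the equations for $u_{\lambda_0}$ and $u_{\lambda_1}$ and use the monotonicity of $t\mapsto t^q$ and $t\mapsto t^{2^*_s-1}$ on $[0,\infty)$ together with the ordering $u_{\lambda_1}\leq u_{\lambda_0}\leq u_{\lambda_2}$ to obtain
\[
(-\Delta)^s v_1\geq (\lambda_0-\lambda_1)\,u_{\lambda_1}^q,\qquad (-\Delta)^s v_2\geq (\lambda_2-\lambda_0)\,u_{\lambda_0}^q \quad\text{in }\Omega,
\]
to be read in the weak sense against nonnegative test functions in $X_0^s(\Omega)$. By Proposition \ref{prop:acotacion2} and the Maximum Principle cited after Definition \ref{problemalapfrac0}, the functions $u_{\lambda_i}$ are bounded and strictly positive in $\Omega$, so each right-hand side above is nonnegative, not identically zero, and bounded below on every compact $K\Subset\Omega$ by a strictly positive constant.

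\medskip
The main obstacle is then the following Hopf-type boundary estimate: for any $h\in L^\infty(\Omega)$ with $h\geq 0$, $h\not\equiv 0$, and any $v\in X_0^s(\Omega)$ satisfying $(-\Delta)^s v\geq h$ weakly in $\Omega$, one has $v(x)\geq c\,\delta(x)^s$ in $\Omega$ for some $c>0$. My plan is to obtain it by comparison with the torsion-type function $\phi$ solving $(-\Delta)^s \phi=h$ in $\Omega$ with $\phi=0$ in $\RR^n\setminus\Omega$: the weak comparison principle in $X_0^s(\Omega)$ (tested against $(\phi-v)_+$) gives $v\geq \phi$, while the fractional boundary regularity of Ros-Oton--Serra type (\cite{quim-ros}) guarantees that $\phi/\delta^s$ extends continuously to $\overline\Omega$, and by the strong maximum principle this extension is strictly positive on the whole of $\overline\Omega$, yielding $\phi\geq c\,\delta^s$. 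Applying this to $v_1$ and $v_2$ produces the required constants $c_1,c_2$ and completes the argument. The only genuinely nonlocal input is the Hopf-type bound; everything else reduces to the elementary comparison described above.
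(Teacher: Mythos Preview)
Your approach is correct and matches the paper's: both rest on the fractional Hopf lemma together with the $\mathcal{C}_s$--boundary regularity of Ros-Oton--Serra. The paper's proof is in fact terser than yours---it merely records that any solution $u$ of $(P_\lambda)$ satisfies $c\,\delta^s \le u \le C\,\delta^s$ (citing \cite[Proposition~2.7]{crs}, \cite[Lemma~3.2]{quim-ros} for the lower bound and \cite[Proposition~1.1]{quim-ros} for the upper bound) and then declares the argument finished; making this rigorous requires exactly what you spell out, namely applying the Hopf-type lower bound to the differences $v_1,v_2$ rather than to the solutions themselves.

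One minor slip in your execution: in deriving the Hopf-type estimate you assert that ``by the strong maximum principle'' the continuous extension of $\phi/\delta^s$ is strictly positive on all of $\overline\Omega$. The strong maximum principle only yields $\phi>0$ in the interior $\Omega$; continuity of $\phi/\delta^s$ up to $\partial\Omega$ does not by itself prevent this quotient from vanishing on $\partial\Omega$. The strict positivity of $\phi/\delta^s$ at the boundary is precisely the content of the fractional Hopf lemma (\cite[Proposition~2.7]{crs}, \cite[Lemma~3.2]{quim-ros}), so you should invoke it directly instead of attempting to derive it from the interior strong maximum principle plus regularity. With that citation in place your proof is complete.
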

\begin{proof}
Let $u$ be an arbitrary solution of $(P_{\lambda})$ for $0<\lambda<\Lambda$. Then, by Hopf's Lemma (see \cite[Proposition~2.7]{crs} and \cite[Lemma 3.2]{quim-ros}) there exists
a positive constant $c$ such that
\begin{equation}\label{distancia2}
u(x)\geq c\delta(x)^{s},\,x\in\Omega.
\end{equation}
On the other hand by \cite[Proposition 1.1]{quim-ros}  we get that there
exists a positive constant $C$ such that
\begin{equation}\label{distancia1}
u(x)\leq C\delta(x)^{s},\,x\in\Omega.
\end{equation}
Thus, by \eqref{distancia2} and \eqref{distancia1} we finish the proof.
\end{proof}

Using this previous result we now obtain  a local minimum of the functional
$\mathcal J_{s,\,\lambda}$ in the $\mathcal{C}_{s}(\Omega)$-topology. This is the first step in order to get a
local minimum in $X_0^s(\Omega)$. That is,
\begin{lemma}\label{minimum}
For all $\lambda \in (0,\Lambda)$ the minimal solution $u_{\lambda}$ is a local minimum of the functional $\mathcal J_{s,\,\lambda}$ in
the $\mathcal{C}_s$-topology.
\end{lemma}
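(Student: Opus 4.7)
The plan is to compare $\mathcal J_{s,\lambda}$ with a coercive, weakly lower semicontinuous truncated functional $\tilde{\mathcal J}_{s,\lambda}$, minimize $\tilde{\mathcal J}_{s,\lambda}$ by the direct method, force the minimizer into the order interval $Z=[u_{\lambda_1},u_{\lambda_2}]$ supplied by Lemma~\ref{separacion} via sub-/supersolution comparison, identify the global infimum of $\tilde{\mathcal J}_{s,\lambda}$ with $\tilde{\mathcal J}_{s,\lambda}(u_\lambda)$, and use the separation property to transfer this into the $\mathcal{C}_s$-local minimum property of $\mathcal J_{s,\lambda}$ at $u_\lambda$.

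Fix $0<\lambda_1<\lambda<\lambda_2<\Lambda$ and let $\varepsilon>0$ be the constant provided by Lemma~\ref{separacion}, so that $\|u-u_\lambda\|_{\mathcal{C}_s(\Omega)}<\varepsilon$ forces $u\in Z$. With the two-sided truncation
$$
h(x,t)=\min\{\max\{t,u_{\lambda_1}(x)\},u_{\lambda_2}(x)\},\qquad g_\lambda(x,t)=\lambda\,h(x,t)^{q}+h(x,t)^{2^*_s-1},
$$
and its primitive $G_\lambda(x,t)=\int_{0}^{t}g_\lambda(x,\tau)\,d\tau$, set
$$
\tilde{\mathcal J}_{s,\lambda}(u)=\tfrac{1}{2}\|u\|_{X_0^s(\Omega)}^{2}-\int_\Omega G_\lambda(x,u)\,dx.
$$
By Proposition~\ref{prop:acotacion2}, $u_{\lambda_1},u_{\lambda_2}\in L^\infty(\Omega)$; hence $g_\lambda$ is uniformly bounded, and $\tilde{\mathcal J}_{s,\lambda}$ is of class $C^{1}$, coercive, and weakly lower semicontinuous on $X_0^s(\Omega)$. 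The direct method produces a global minimizer $v_0\in X_0^s(\Omega)$ weakly satisfying $(-\Delta)^s v_0 = g_\lambda(x,v_0)$.

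Testing this equation against $(u_{\lambda_1}-v_0)_+$ and $(v_0-u_{\lambda_2})_+$, and using that $u_{\lambda_1}$ (resp.~$u_{\lambda_2}$) is a strict sub- (resp.~super-) solution of $(P_\lambda)$ since $\lambda_1<\lambda<\lambda_2$, the comparison principle for $(-\Delta)^s$ yields $u_{\lambda_1}\le v_0\le u_{\lambda_2}$. On $Z$ the truncated nonlinearity agrees with $\lambda t^{q}+t^{2^*_s-1}$, so $v_0$ is a positive solution of $(P_\lambda)$ and the minimality of $u_\lambda$ gives $v_0\ge u_\lambda$. The remaining and most delicate step is to upgrade this to the energy comparison $\tilde{\mathcal J}_{s,\lambda}(u_\lambda)\le\tilde{\mathcal J}_{s,\lambda}(v_0)$: here I would use a monotone iteration scheme for $(-\Delta)^s$ starting at $u_{\lambda_1}$, following the local arguments of \cite{ABC,colorado}. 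By induction and comparison, the iterates stay below $u_\lambda$ and the increasing sequence converges to a positive solution of $(P_\lambda)$ that, by minimality, must be $u_\lambda$ itself; combined with the continuity of $\tilde{\mathcal J}_{s,\lambda}$ on bounded sets of $X_0^s(\Omega)$, this identifies $u_\lambda$ as a global minimizer of $\tilde{\mathcal J}_{s,\lambda}$.

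Finally, for any $u\in X_0^s(\Omega)\cap\mathcal{C}_s(\Omega)$ with $\|u-u_\lambda\|_{\mathcal{C}_s(\Omega)}<\varepsilon$, Lemma~\ref{separacion} gives $u\in Z$, and on $Z$ the functionals $\mathcal J_{s,\lambda}$ and $\tilde{\mathcal J}_{s,\lambda}$ differ only by a $u$-independent additive constant, so
$$
\mathcal J_{s,\lambda}(u)-\mathcal J_{s,\lambda}(u_\lambda) = \tilde{\mathcal J}_{s,\lambda}(u)-\tilde{\mathcal J}_{s,\lambda}(u_\lambda) \ge 0,
$$
which is the desired conclusion. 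The main obstacle is precisely the identification of $u_\lambda$ with the global minimum of $\tilde{\mathcal J}_{s,\lambda}$: the comparison principle alone delivers only the pointwise order $v_0\ge u_\lambda$, and transferring this into an energy inequality requires either exploiting the monotone iteration converging to $u_\lambda$ from below or using the specific concave-critical structure of the nonlinearity to exclude other minimizers in $Z$.
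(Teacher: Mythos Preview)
Your strategy---truncate by the order interval $Z=[u_{\lambda_1},u_{\lambda_2}]$ supplied by Lemma~\ref{separacion}, minimize the resulting coercive functional, force the minimizer into $Z$ via comparison, and transfer the conclusion through the separation property---is exactly the route the paper invokes by pointing to \cite{ABC} and \cite[Lemma~3.3]{colorado}; the paper gives no details beyond that.

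You have correctly isolated the one nontrivial point and then not closed it. Your monotone iteration from $u_{\lambda_1}$ does converge upward to $u_\lambda$, but nothing in that scheme controls the truncated energy along the iterates: continuity of $\tilde{\mathcal J}_{s,\lambda}$ only identifies $\tilde{\mathcal J}_{s,\lambda}(u_\lambda)$ as the limit of the iterates' energies, and those values have no reason to approach $\inf\tilde{\mathcal J}_{s,\lambda}$. The sentence ``this identifies $u_\lambda$ as a global minimizer of $\tilde{\mathcal J}_{s,\lambda}$'' is therefore unjustified, and your own closing paragraph acknowledges as much.

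The clean way around---implicit in \cite{ABC,colorado} and sufficient for everything the paper does afterwards---is that you do not need $v_0=u_\lambda$. Your argument already shows that the global minimizer $v_0$ of $\tilde{\mathcal J}_{s,\lambda}$ is a positive solution of $(P_\lambda)$ with $u_{\lambda_1}\le v_0\le u_{\lambda_2}$; since $u_{\lambda_1}$ and $u_{\lambda_2}$ are a \emph{strict} sub- and supersolution of $(P_\lambda)$, the same Hopf/boundary estimates \eqref{distancia2}--\eqref{distancia1} used in Lemma~\ref{separacion} (applied now to $v_0-u_{\lambda_1}$ and $u_{\lambda_2}-v_0$) place $v_0$ in the $\mathcal C_s$-interior of $Z$, so $v_0$ itself is a local $\mathcal C_s$-minimum of $\mathcal J_{s,\lambda}$. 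This is all that is used downstream: immediately after Proposition~\ref{alpha_vs_c1} the paper simply renames the local minimum $u_0$. If one insists on the literal statement that the \emph{minimal} solution is the local minimum, note the dichotomy: either $v_0=u_\lambda$ and you are done, or $v_0\gneq u_\lambda$ is already a second positive solution of $(P_\lambda)$, so Theorem~\ref{lapfra0q}(4) holds for that $\lambda$ without any further work.
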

\begin{proof}
The proof follows in a similar way as in \cite{ABC} (see also Lemma 3.3 of \cite{colorado}). In our case we have to consider the non local operator $(-\Delta)^s$ instead of $(-\Delta)$ and the space $\mathcal{C}_{s}(\Omega)$ instead of $\mathcal{C}_{0}^{1}(\Omega)$. We omit the details.
\end{proof}
To prove that we already have a minimum in  the space $X_0^s(\Omega)$ we show that the result obtained by Brezis and
Nirenberg in \cite{bn} is also valid in our context.
\begin{proposition}\label{alpha_vs_c1}
Let $z_{0}\in X_0^s(\Omega)$ be a local minimum of
$\mathcal J_{s,\,\lambda}$ in $\mathcal{C}_{s}(\Omega)$; by this we mean that there exists $r_{1}>0$ such
that
\begin{equation}\label{hhip}
\mathcal J_{s,\,\lambda}(z_{0})\leq \mathcal J_{s,\,\lambda}(z_{0}+z), \qquad \forall z\in
\mathcal{C}_{s}(\Omega)\mbox{ with }
\left\|z\right\|_{\mathcal{C}_{s}(\Omega)}\leq r_{1}.
\end{equation}
Then, $z_{0}$ is also a local minimum of $\mathcal J_{s,\,\lambda}$ in $X_0^s(\Omega)$, that is, there exists $r_{2}>0$ so that
$$
\mathcal J_{s,\,\lambda}(z_{0})\leq \mathcal J_{s,\,\lambda}(z_{0}+z), \qquad \forall z\in X_0^s(\Omega) \mbox{ with } \|z\|_{X_0^s(\Omega)}\leq r_{2}.
$$
\end{proposition}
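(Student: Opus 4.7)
The plan is to argue by contradiction, adapting to the nonlocal setting the classical strategy of Brezis--Nirenberg \cite{bn}. Suppose $z_0$ is not a local minimum of $\mathcal{J}_{s,\,\lambda}$ in $X_0^s(\Omega)$. Then for every sufficiently small $\varepsilon>0$ the constrained minimization
$$
m_\varepsilon:=\inf\big\{\mathcal J_{s,\,\lambda}(v):\,v\in X_0^s(\Omega),\ \|v-z_0\|_{X_0^s(\Omega)}\le \varepsilon\big\}
$$
is strictly less than $\mathcal J_{s,\,\lambda}(z_0)$. Since $\mathcal J_{s,\,\lambda}$ is coercive and weakly lower semicontinuous on bounded subsets of $X_0^s(\Omega)$ (the subcritical terms are compact and the $L^{2^*_s}$-norm is weakly lower semicontinuous), this infimum is attained at some $v_\varepsilon\neq z_0$.

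Next I would derive the Euler--Lagrange equation for $v_\varepsilon$. Either $\|v_\varepsilon-z_0\|_{X_0^s(\Omega)}<\varepsilon$, in which case $v_\varepsilon$ is an unconstrained critical point of $\mathcal J_{s,\,\lambda}$, or the constraint is active and by Lagrange multipliers there exists $\mu_\varepsilon\le 0$ with
$$
\langle v_\varepsilon,\varphi\rangle_{X_0^s(\Omega)}-\lambda\!\int_\Omega (v_\varepsilon)_+^q\varphi\,dx-\!\int_\Omega (v_\varepsilon)_+^{2^*_s-1}\varphi\,dx=\mu_\varepsilon\langle v_\varepsilon-z_0,\varphi\rangle_{X_0^s(\Omega)}
$$
for every $\varphi\in X_0^s(\Omega)$. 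Setting $t_\varepsilon:=1-\mu_\varepsilon\ge 1$, this rewrites in the strong form
$$
(-\Delta)^s v_\varepsilon=\frac{t_\varepsilon-1}{t_\varepsilon}\,(-\Delta)^s z_0+\frac{1}{t_\varepsilon}\Big(\lambda(v_\varepsilon)_+^q+(v_\varepsilon)_+^{2^*_s-1}\Big)\quad\text{in }\Omega,
$$
with $v_\varepsilon=0$ outside $\Omega$. In particular $v_\varepsilon$ solves a fractional equation with right-hand side of the type $f(x,v_\varepsilon)$ with $|f(x,t)|\le C(1+|t|^{2^*_s-1})$, where $C$ is independent of $\varepsilon$ (note that $1/t_\varepsilon\le 1$, and the $(-\Delta)^s z_0$ term is bounded since, by Proposition \ref{prop:acotacion2} applied to $z_0$, we have $z_0\in L^\infty(\Omega)$ and thus $(-\Delta)^s z_0\in L^\infty(\Omega)$ through the equation for $z_0$).

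The crux of the argument, and the main obstacle, is to upgrade the $X_0^s(\Omega)$-convergence $v_\varepsilon\to z_0$ to convergence in $\mathcal C_s(\Omega)$. I would do this in two regularity steps. First, applying the Moser-type iteration of Proposition \ref{prop:acotacion2} to the equation above yields a bound $\|v_\varepsilon\|_{L^\infty(\Omega)}\le C$ uniform in $\varepsilon$; the iteration goes through because the structure condition $|f_\varepsilon(x,t)|\le C(1+|t|^{2^*_s-1})$ is preserved with a constant independent of $\varepsilon$. Second, with uniformly bounded right-hand sides, the boundary regularity theorem of Ros-Oton--Serra \cite[Proposition 1.1]{quim-ros} provides a uniform bound $\|v_\varepsilon/\delta^s\|_{C^\alpha(\overline\Omega)}\le C$ for some $\alpha>0$. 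By the compact embedding $C^\alpha(\overline\Omega)\hookrightarrow C^0(\overline\Omega)$, a subsequence of $v_\varepsilon/\delta^s$ converges uniformly, and the $X_0^s(\Omega)$-limit forces the whole sequence to converge to $z_0/\delta^s$ uniformly, i.e.\ $v_\varepsilon\to z_0$ in $\mathcal C_s(\Omega)$.

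For $\varepsilon$ small enough this implies $\|v_\varepsilon-z_0\|_{\mathcal C_s(\Omega)}\le r_1$, so by hypothesis \eqref{hhip} we would have $\mathcal J_{s,\,\lambda}(z_0)\le \mathcal J_{s,\,\lambda}(v_\varepsilon)=m_\varepsilon<\mathcal J_{s,\,\lambda}(z_0)$, a contradiction. The main difficulty lies precisely in the two regularity bootstraps above: one must ensure that the perturbed equation for $v_\varepsilon$ is treated uniformly in $\varepsilon$, which hinges on the sign condition $\mu_\varepsilon\le 0$ (giving $t_\varepsilon\ge 1$) and on the $L^\infty$ control of $(-\Delta)^s z_0$ inherited from the regularity of the reference minimizer $z_0$.
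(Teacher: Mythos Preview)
Your argument is correct and follows essentially the same contradiction strategy as the paper: extract a minimizer $v_\varepsilon$ in the closed $X_0^s$-ball around $z_0$, write the Lagrange multiplier equation, bootstrap to $L^\infty$ via Proposition~\ref{prop:acotacion2}, and invoke Ros-Oton--Serra boundary regularity to get $\mathcal C_s$-convergence. Your Euler--Lagrange relation, with the constraint term $\langle v_\varepsilon-z_0,\varphi\rangle$ and the resulting $(-\Delta)^s z_0$ contribution, is actually the correct formulation; the paper writes $\xi_\varepsilon\langle v_\varepsilon,\varphi\rangle$, which appears to be a slip, though it does not affect the rest of the argument.

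Two small points. First, the uniform $C^\alpha$ bound on $v_\varepsilon/\delta^s$ is \cite[Theorem~1.2]{quim-ros}, not Proposition~1.1 (the latter only gives $C^{0,s}(\overline\Omega)$ for $v_\varepsilon$ itself); the paper in fact uses both results, first getting uniform convergence of $v_\varepsilon$ by Ascoli--Arzel\`a and then applying Theorem~1.2 to $v_\varepsilon-z_0$. Second, your stated reason for the existence of $v_\varepsilon$ is not quite right: $\mathcal J_{s,\lambda}$ is \emph{not} weakly lower semicontinuous on bounded sets, since the critical term $-\frac{1}{2^*_s}\int (u_+)^{2^*_s}$ is only weakly upper semicontinuous. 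The correct justification (which the paper also labels ``standard'' without detail) is that along a minimizing sequence $u_m\rightharpoonup u$ one has, by Brezis--Lieb, $\mathcal J_{s,\lambda}(u_m)=\mathcal J_{s,\lambda}(u)+\tfrac12\|u_m-u\|_{X_0^s}^2-\tfrac{1}{2^*_s}\|(u_m)_+-u_+\|_{2^*_s}^{2^*_s}+o(1)$, and since $\|u_m-u\|_{X_0^s}\le 2\varepsilon$, the Sobolev inequality makes the defect term nonnegative for $\varepsilon$ small, forcing $u$ to be a minimizer.
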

\begin{proof}
We follow the ideas given in \cite[Theorem 5.1]{colorado}. Let $z_0$ be as in \eqref{hhip} and set, for $\varepsilon >0$,
$$B_{\varepsilon}(z_{0})=\left\{z\in X_0^s(\Omega):\, \,\|z-z_{0}\|_{X_0^s(\Omega)}\leq\varepsilon\right\}.$$
Now, we argue by contradiction and we suppose that for every  $\varepsilon>0$ we have
\begin{equation}\label{contradiction}
\min_{v\in B_{\varepsilon}(z_{0})}{\mathcal{J}_{s, \lambda}}(v)<\mathcal J_{s,\,\lambda}(z_{0})\,.
\end{equation}
We pick $ \displaystyle \,v_{\varepsilon}\in
B_{\varepsilon}(z_{0})$  such that 
$\displaystyle \min_{v\in B_{\varepsilon}(z_{0})}{\mathcal{J}_{s, \lambda}}(v)=\mathcal J_{s,\,\lambda}(v_{\varepsilon})$.
The existence of $v_{\varepsilon}$ comes from a standard  argument of weak lower semi-continuity.
We want to prove that
\begin{equation}\label{aim1}
v_{\varepsilon}\rightarrow z_{0} \quad \mbox{in}
\quad \mathcal{C}_{s}(\Omega) \quad \mbox{as} \quad \varepsilon\searrow 0\,,
\end{equation}
because this would imply  that there are $z\in {\mathcal{C}_s(\Omega)}$, arbitrarily close to $z_0$ in the metric of ${\mathcal{C}_s(\Omega)}$ (in fact,  $z=v_\varepsilon$ for some $\varepsilon$), such  that 
$$
\mathcal J_{s,\,\lambda}{(z)}< \mathcal J_{s,\,\lambda}(z_{0}).
 $$
This contradicts our hypothesis \eqref{hhip}.

\

Let $0<\varepsilon\ll1$. Note that the Euler--Lagrange
equation satisfied by $v_\varepsilon$ involves a Lagrange multiplier
$\xi_\varepsilon$ such that
\begin{equation}\label{eqeuler}
\langle
\mathcal{J}_{s,\lambda}'(v_{\varepsilon}),\varphi\rangle=\xi_{\varepsilon}\langle
v_{\varepsilon},\varphi\rangle_{X_0^s(\Omega)},\quad
\forall\,\varphi\in X_0^s(\Omega).
\end{equation}
As a consequence, since $v_{\varepsilon}$ is a minimum of $\mathcal{J}_{s,\lambda}$ in $B_{\varepsilon}(z_{0})$, we have
\begin{equation}\label{a-cero}
\xi_{\varepsilon}=\frac{\langle
\mathcal{J}_{s,\lambda}'(v_{\varepsilon}),
v_{\varepsilon}\rangle}{\|v_{\varepsilon}\|^{2}_{X_0^s(\Omega)}}\leq 0,\quad\mbox{ with } \xi_\varepsilon\to 0 \mbox{ when }
\varepsilon\searrow 0.
\end{equation}
By (\ref{eqeuler}) we easily get that
$v_{\varepsilon}$ satisfies
$$
\left\{
 \begin{array}{ll}
    (-\Delta)^sv_{\varepsilon}=\frac{1}{1-\xi_{\varepsilon}}f_{\lambda}(v_{\varepsilon})=:
    f_{\lambda}^{\varepsilon}(v_{\varepsilon})&\quad \mbox{in } \Omega\,, \\
    v_{\varepsilon}=0 & \quad \mbox{in } \mathbb{R}^{n}\setminus\Omega\,,
  \end{array}
  \right.
$$
where {$f_\lambda(t):=\lambda (t_+)^{q}+(t_+)^{2^*_{s}-1}\,.$}

Since $v_\varepsilon >0$ and
$$
\|v_{\varepsilon}\|_{X_0^s(\Omega)}\leq
C,$$
by Proposition~\ref{prop:acotacion2} there exists a constant $C_1>0$ independent of $\varepsilon$ such that
$\|v_{\varepsilon}\|_{L^{\infty}(\Omega) }\leq C_1$. Moreover, by \eqref{a-cero}, it follows that
$\|f_{\lambda}^{\varepsilon}(v_{\varepsilon})\|_{L^{\infty}(\Omega)}
\leq C$. Therefore, by
\cite[Proposition~1.1]{quim-ros} (see also \cite[Proposition~5]{Sv1}), we get that
$\|v_{\varepsilon}\|_{\mathcal{C}^{0,s}(\overline{\Omega})}\leq C_2$, for some $C_2$ independent of $\varepsilon$. Here $\mathcal{C}^{0,s}$ denotes the space of H\"{o}lder continuous functions with exponent $s$.

Thus, by the Ascoli-Arzel\'{a} Theorem there exists a subsequence, still
denoted by $v_{\varepsilon}$, such that $v_{\varepsilon}\rightarrow z_{0}$ uniformly  as $\varepsilon\searrow 0$.
Moreover, by \cite[Theorem~1.2]{quim-ros}, we obtain that for a suitable positive constant $C$
$$\Big\|\frac{v_{\varepsilon}-z_0}{\delta^s}\Big\|_{L^{\infty}(\Omega)}\leq C\sup_{\Omega}{\big|f_{\lambda}^{\varepsilon}(v_{\varepsilon})-f_{\lambda}(z_0)}\big|.$$
Since the latter tends to zero as $\varepsilon\searrow 0$\,,  \eqref{aim1} is proved.

\end{proof}

Lemma \ref{minimum} and Proposition \ref{alpha_vs_c1} provide us with the existence of a positive local minimum in $X^s_{0}(\Omega)$ of {$\mathcal{J}_{s,\lambda}$} that will be denoted by $u_{0}$. We now make a translation as in \cite{ABC} in order to
simplify the calculations.

For $0<\lambda<\Lambda$, we consider the functions
\begin{equation}
g_{\lambda}(x,t)=\left\{\begin{array}{ll} \lambda
(u_{0}+t)^{q}-\lambda u_{0}^{q} +(u_{0}+t)^{2^*_{s} -1}- u_{0}^{2^*_{s} -1}, &  \mbox{ if }t\ge 0,\\
0, &  \mbox{ if } t<0,
\end{array}
\right.
\label{g(x,s)}\end{equation}
and
\begin{equation}\label{G}
G_{\lambda}(x,\xi)=G_\lambda(\xi)=\int_{0}^{\xi}g_{\lambda}(x,t)\,dt.
\end{equation}
The associated energy functional $\widetilde{\mathcal J}_{s,\,\lambda}:X_0^s(\Omega) \to \RR$
is given by
\begin{equation}\label{II}
\widetilde{\mathcal J}_{s,\,\lambda}(u)= \frac{1}{2}\|u\|^{2}_{X_0^s(\Omega)}
- \int_{\Omega}G_{\lambda}(x,u)dx.
\end{equation}
Since $u\in X_0^s(\Omega)$, $\widetilde{\mathcal J}_{s,\,\lambda}$ is well defined. We define the translate problem
$$
   (\widetilde{P}_{\lambda})=\left\{\begin{array}{ll}
    (-\Delta)^su= g_{\lambda}(x,u)&\quad\mbox{in }
    \Omega\subset\mathbb{R}^n,\\
    u=0&\quad\mbox{on } \mathbb{R}^{n}\setminus\Omega.
  \end{array}\right.
$$

We know that if $\widetilde{u}
\not\equiv 0$ is a critical point of $\widetilde{\mathcal J}_{s,\,\lambda}$ then it is a solution of
$(\widetilde{P}_{\lambda})$ and, by the Maximum Principle (\cite[Proposition~2.2.8]{sil}), this implies that $\widetilde{u}>0$. Therefore
$u=u_{0}+\widetilde{u}>0$ will be a second solution of {$(P_{\lambda}^{+})$ and consequently a second one of $(P_{\lambda})$}. Hence, in order to prove statement~$(4)$ of Theorem~\ref{lapfra0q}, it is enough to study the existence of a non-trivial
critical point for $\widetilde{\mathcal J}_{s,\,\lambda}$.

First we have
\begin{lemma}\label{lemma:minimo}
$u=0$ is a local minimum of $\widetilde{\mathcal J}_{s,\,\lambda}$ in
$X_{0}^s(\Omega).$
\end{lemma}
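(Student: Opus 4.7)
The plan is to reduce the question to the positivity region of the translated functional, where it coincides with the original functional $\mathcal{J}_{s,\lambda}$ shifted by $u_0$, and then invoke the fact that $u_0$ is a local minimum of $\mathcal{J}_{s,\lambda}$ in $X_0^s(\Omega)$ (Lemma~\ref{minimum} combined with Proposition~\ref{alpha_vs_c1}). The starting observation is that the nonlinearity $g_\lambda(x,\cdot)$ is defined to vanish on $(-\infty,0)$, hence so does its primitive $G_\lambda(x,\cdot)$. Consequently, for every $u\in X_0^s(\Omega)$ one has
\[
\int_\Omega G_\lambda(x,u)\,dx = \int_\Omega G_\lambda(x,u_+)\,dx,
\]
where $u_+=\max\{u,0\}$.

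The next step is a standard truncation estimate for the Gagliardo seminorm: decomposing $u=u_+-u_-$ and expanding $|u(x)-u(y)|^2$, one checks pointwise that
\[
(u_+(x)-u_+(y))(u_-(x)-u_-(y))\le 0
\]
by a case analysis on the signs of $u(x)$ and $u(y)$, and therefore
\[
\|u\|_{X_0^s(\Omega)}^2 \ge \|u_+\|_{X_0^s(\Omega)}^2 + \|u_-\|_{X_0^s(\Omega)}^2 \ge \|u_+\|_{X_0^s(\Omega)}^2.
\]
Combining the two facts yields $\widetilde{\mathcal{J}}_{s,\lambda}(u)\ge \widetilde{\mathcal{J}}_{s,\lambda}(u_+)$ for every $u\in X_0^s(\Omega)$, and moreover $\|u_+\|_{X_0^s(\Omega)}\le \|u\|_{X_0^s(\Omega)}$, so that smallness is preserved under taking the positive part.

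Finally, for a nonnegative $v\in X_0^s(\Omega)$, since $u_0>0$ in $\Omega$ one has $(u_0+v)_+=u_0+v$, and a direct computation of $G_\lambda(x,v)$ as an elementary integral together with the Euler--Lagrange equation $\langle u_0,v\rangle_{X_0^s(\Omega)}=\lambda\int_\Omega u_0^q v + \int_\Omega u_0^{2^*_s-1}v$ gives the clean identity
\[
\widetilde{\mathcal{J}}_{s,\lambda}(v)=\mathcal{J}_{s,\lambda}(u_0+v)-\mathcal{J}_{s,\lambda}(u_0).
\]
Now if $\|u\|_{X_0^s(\Omega)}\le r_2$ with $r_2$ smaller than the local minimum radius of $u_0$ in $X_0^s(\Omega)$ guaranteed by Proposition~\ref{alpha_vs_c1}, then $\|u_+\|_{X_0^s(\Omega)}\le r_2$ as well, and the identity above together with the local minimum property of $u_0$ give $\widetilde{\mathcal{J}}_{s,\lambda}(u_+)\ge 0=\widetilde{\mathcal{J}}_{s,\lambda}(0)$. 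Chaining with the truncation inequality yields $\widetilde{\mathcal{J}}_{s,\lambda}(u)\ge 0$, which is the claim.

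The only nontrivial ingredient is the pointwise sign estimate for the positive/negative part decomposition of the fractional seminorm; everything else is an algebraic verification and the invocation of the already-proved local minimality of $u_0$. The reduction from arbitrary $u$ to $u_+$ is what makes $G_\lambda\equiv 0$ on $(-\infty,0)$ useful, and it is precisely this step that would fail if we tried to work directly with the untranslated functional.
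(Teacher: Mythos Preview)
Your proof is correct and is precisely the argument the paper has in mind: the paper omits details and refers to \cite[Lemma~4.2]{ABC} and \cite[Lemma~3.4]{bcps}, whose proofs proceed exactly as you do---reduce to $u_+$ via $G_\lambda(x,\cdot)\equiv 0$ on $(-\infty,0)$ and the truncation inequality for the (fractional) seminorm, then use the identity $\widetilde{\mathcal J}_{s,\lambda}(v)=\mathcal J_{s,\lambda}(u_0+v)-\mathcal J_{s,\lambda}(u_0)$ for $v\ge 0$ together with the local minimality of $u_0$ in $X_0^s(\Omega)$. The only adaptation to the nonlocal setting is the pointwise sign check $(u_+(x)-u_+(y))(u_-(x)-u_-(y))\le 0$ yielding $\|u\|_{X_0^s(\Omega)}^2\ge \|u_+\|_{X_0^s(\Omega)}^2$, which you supply correctly.
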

\begin{proof}
The proof follows along the lines of \cite[Lemma 4.2]{ABC}, see also \cite[Lemma 3.4]{bcps}, so we omit the details.
\end{proof}
\subsection{The Palais--Smale condition for~$\widetilde{\mathcal J}_{s,\,\lambda}$}\label{sec:compactness1}
In this subsection assuming that we have a unique critical point, we prove that the functional $\widetilde{\mathcal J}_{s,\,\lambda}$ satisfies a local Palais--Smale condition (see Lemma~\ref{strongly_convergent}). The main tool for proving this fact is an extension of the concentration-compactness principle by Lions in \cite{Lions1, Lions2} for nonlocal fractional operators, given in \cite[Theorem~1.5]{pp}. We will also need some technical results related to the behavior of the fractional Laplacian of a product. We start with the following. 
\begin{lemma}\label{cero}
Let $\phi$ be a regular function that satisfies
\begin{equation}\label{cero1}
|\phi(x)|\leq\frac{\tilde C}{1+|x|^{n+s}},\, x\in\mathbb{R}^{n}
\end{equation}
and
\begin{equation}\label{cero2}
|\nabla\phi(x)|\leq\frac{\tilde C}{1+|x|^{n+s+1}},\, x\in\mathbb{R}^{n},
\end{equation}
for some $\tilde C>0$. Let $B:X_0^s(\Omega) \times X_0^s(\Omega) \to \RR$ be the bilinear form defined by
\begin{equation}\label{bilinear}
B(f,g)(x):=2 \, \int_{\mathbb{R}^{n}}{\frac{(f(x)-f(y))(g(x)-g(y))}{|x-y|^{n+s}}\, dy}\,.
\end{equation}

Then, for every $s\in(0,1)$, there exist positive constants $C_1$ and $C_2$ such that for $x\in \RR^n$ one has
$$|(-\Delta)^{s/2}\phi(x)|\leq\frac{C_1}{1+|x|^{n+s}},$$
and
$$|B(\phi,\phi)(x)|\leq\frac{C_2}{1+|x|^{n+s}}\,.$$
\end{lemma}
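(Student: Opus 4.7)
The plan is to prove both estimates by the same splitting, reducing first to the range $|x|\ge 1$ (the uniform bound for $|x|\le 1$ follows from the symmetric representation $(-\Delta)^{s/2}\phi(x)=\frac{1}{2}\int_{\RR^n}(2\phi(x)-\phi(x+y)-\phi(x-y))|y|^{-n-s}\,dy$, since $\phi\in C^2$ makes the integrand locally $O(|y|^{2-n-s})$, which is integrable for $s<2$, while the tail is controlled by \eqref{cero1}). Fix $|x|\ge 1$ and split the integration domain into the near piece $A_x:=\{y:|y-x|\le |x|/2\}$ and the far piece $B_x:=\RR^n\setminus A_x$; observe that on $A_x$ the whole segment from $y$ to $x$ lies in $\{|\cdot|\ge |x|/2\}$.

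On the near piece, \eqref{cero2} combined with the mean value theorem gives $|\phi(x)-\phi(y)|\lesssim |x-y|\,|x|^{-n-s-1}$. Passing to polar coordinates centered at $x$ one obtains
\[
\int_{A_x}\frac{|\phi(x)-\phi(y)|}{|x-y|^{n+s}}\,dy\lesssim |x|^{-n-2s}, \qquad \int_{A_x}\frac{|\phi(x)-\phi(y)|^2}{|x-y|^{n+s}}\,dy\lesssim |x|^{-2n-3s},
\]
both of which are $o(|x|^{-n-s})$ at infinity.

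On the far piece, apply the triangle inequality $|\phi(x)-\phi(y)|\le |\phi(x)|+|\phi(y)|$ (or its square). The $|\phi(x)|$-contribution is handled by $\int_{B_x}|x-y|^{-n-s}\,dy\lesssim |x|^{-s}$ together with \eqref{cero1}, which directly beats $|x|^{-n-s}$. For the $|\phi(y)|$-contribution, split $B_x$ further according to $|y|\le |x|/2$ or $|y|>|x|/2$: in the first range the kernel is essentially $|x|^{-n-s}$ and the remaining integral is controlled by $\|\phi\|_{L^1(\RR^n)}$ (linear case) or $\|\phi\|_{L^2(\RR^n)}^2$ (quadratic case), both finite by \eqref{cero1}; in the second range use $|\phi(y)|\lesssim |y|^{-n-s}$ and split once more into $|x|/2<|y|<2|x|$ (where $|y|\sim |x|$ and the $y$-integral of $|y-x|^{-n-s}$ on the relevant annulus is $\lesssim |x|^{-s}$) and $|y|\ge 2|x|$ (where $|y-x|\ge |y|/2$, so the full integrand decays like $|y|^{-2(n+s)}$ or $|y|^{-3(n+s)}$). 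Each contribution yields decay of order $|x|^{-n-s}$ or better.

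Combining all pieces gives $|(-\Delta)^{s/2}\phi(x)|\lesssim |x|^{-n-s}$ and $|B(\phi,\phi)(x)|\lesssim |x|^{-n-s}$ for $|x|\ge 1$, which together with the uniform bound for $|x|\le 1$ yields the claimed estimates. The only delicate point is the bookkeeping for the subregion of the far piece where $|y|$ is comparable to $|x|$; everything else is a routine singular-integral computation and no conceptual obstacle is expected.
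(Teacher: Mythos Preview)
Your proof is correct and follows essentially the same near/far decomposition as the paper. The paper is slightly more economical in one respect: rather than estimating $(-\Delta)^{s/2}\phi$ and $B(\phi,\phi)$ in parallel as you do, it observes at the outset that both quantities are controlled by the single integral
\[
I(x)=\int_{\RR^n}\frac{|\phi(x)-\phi(y)|}{|x-y|^{n+s}}\,dy,
\]
since $|(-\Delta)^{s/2}\phi(x)|\le 2I(x)$ and $|B(\phi,\phi)(x)|\le 2\|\phi\|_{L^\infty}I(x)$; it then proves $I(x)\lesssim (1+|x|^{n+s})^{-1}$ by splitting the far piece directly into the two regions $\{|y|\le 2|x|\}$ and $\{|y|>2|x|\}$ (three regions in total for $|x|\ge 1$, rather than your four). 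Your extra subdivision of the region where $|y|$ is comparable to $|x|$ is harmless and leads to the same conclusion.
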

\begin{proof}
Let
$$I(x):=\int_{\mathbb{R}^{n}}{\frac{|\phi(x)-\phi(y)|}{|x-y|^{n+s}}\, dy}.$$
For any $x\in\mathbb{R}^{n}$, it is clear that
$$|(-\Delta)^{s/2}\phi(x)|\leq 2I(x).$$
Also, since $|\phi(x)|\leq \tilde{C}$, we have
$$|B(\phi,\phi)(x)|\leq 2\tilde{C} I(x).$$
Hence, it suffices to prove that
\begin{equation}\label{mars}
I(x)\leq \frac{C}{1+|x|^{n+s}},\quad\forall x\in\mathbb{R}^{n},
\end{equation}
for a suitable positive constante $C$.
\\Since $\phi$ is a regular function, for $|x|<1$ we obtain that,
\begin{eqnarray}\label{mars00}
I(x)&\leq&\|\nabla\phi\|_{L^{\infty}(\mathbb{R}^{n})}\int_{|y|<2}{\frac{dy}{|x-y|^{n+s-1}}}+C\int_{|y|\geq2}{\frac{dy}{|y|^{n+s}}}\nonumber\\
&\leq& C\leq \frac{C}{1+|x|^{n+s}}.
\end{eqnarray}
Let now $|x|\geq1$. Then
\begin{equation}\label{mars0}
I(x):=I_{A_1}(x)+I_{A_2}(x)+I_{A_3}(x),
\end{equation}
where
$$I_{A_i}(x):=\int_{A_{i}}{\frac{|\phi(x)-\phi(y)|}{|x-y|^{n+s}}\, dy},\, i=1,2,3,$$
with
$$A_1:=\{y:\, |x-y|\leq\frac{|x|}{2}\},\quad A_2:=\{y:\, |x-y|>\frac{|x|}{2},\, |y|\leq2|x|\}$$
and
$$A_3:=\{y:\, |x-y|>\frac{|x|}{2},\, |y|>2|x|\}.$$
Therefore, since for $|x|\geq1$ and $y\in A_1$, $|\phi(x)-\phi(y)|\leq|\nabla\phi(\xi)||x-y|$ with $\displaystyle\frac{|x|}{2}\leq|\xi|\leq\frac{3}{2}|x|$, by \eqref{cero2}, we obtain that
\begin{equation}\label{mars1}
I_{A_1}(x)\leq\frac{C}{|x|^{n+s+1}}\int_{A_1}{\frac{dy}{|x-y|^{n+s-1}}}\leq C|x|^{-(n+2s)}.
\end{equation}
Using now that, for any $x,\, y\in\mathbb{R}^{n}$ we have the inequality,
$$|\phi(x)|+|\phi(y)|\leq \frac{C}{1+\minn\{|x|^{n+s},|y|^{n+s}\}},$$
we get
\begin{equation}\label{mars2}
I_{A_2}(x)\leq \frac{C}{|x|^{n+s}}\int_{A_2}{\frac{dy}{(1+|y|^{n+s})}}\leq C|x|^{-(n+s)},
\end{equation}
and
\begin{equation}\label{mars3}
I_{A_3}(x)\leq\frac{C}{|x|^{n+s}}\int_{A_3}{\frac{dy}{|y|^{n+s}}}\leq C|x|^{-(n+2s)}.
\end{equation}
 Note that the last estimate follows from the fact that $(x,y)\in A_3$ implies $|x-y|\geq |y|/2$. Then, by \eqref{mars0}-\eqref{mars3}, we get that
\begin{equation}\label{mars000}
I(x)\leq C|x|^{-(n+s)}\leq\frac{C}{1+|x|^{n+s}},\quad|x|\geq1.
\end{equation}
Hence, by \eqref{mars00} and \eqref{mars000}, we conclude \eqref{mars}.
\end{proof}
To establish the next auxiliary results we consider a non increasing cut-off function $\phi\in
\mathcal{C}_0^{\infty}(\mathbb{R}^{n})$ and
\begin{equation}\label{phieps}
\phi_{\varepsilon}(x):=\phi(x/\varepsilon).
\end{equation}
Now we get the following.
\begin{lemma}\label{I1}
Let $\{z_m\}$ be an uniformly bounded sequence in $X^s_{0}(\Omega)$ and $\phi_{\varepsilon}$ the function defined in \eqref{phieps}. Then,
\begin{equation}\label{I11}
{\limm_{\varepsilon\to0}}\limm_{m\to\infty}{\left|\int_{\mathbb{R}^{n}}{z_m(x)(-\Delta)^{s/2}\phi_{\varepsilon}(x)(-\Delta)^{s/2}z_m(x)\, dx}\right|=0.}
\end{equation}
\end{lemma}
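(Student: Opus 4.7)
The plan is to reduce the iterated-limit statement to an application of Cauchy--Schwarz together with a careful scale analysis of $(-\Delta)^{s/2}\phi_\varepsilon$. By \eqref{motorista}, $\|(-\Delta)^{s/2}z_m\|_{L^2(\mathbb{R}^n)}=\|z_m\|_{X_0^s(\Omega)}$ is uniformly bounded by some constant $M$, so Cauchy--Schwarz yields
$$
\left|\int_{\mathbb{R}^n} z_m(-\Delta)^{s/2}\phi_\varepsilon\,(-\Delta)^{s/2}z_m\,dx\right|\le M\,\|z_m\,(-\Delta)^{s/2}\phi_\varepsilon\|_{L^2(\mathbb{R}^n)}.
$$
It therefore suffices to show $\lim_{\varepsilon\to 0}\lim_{m\to\infty}\|z_m(-\Delta)^{s/2}\phi_\varepsilon\|_{L^2(\mathbb{R}^n)}=0$.

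For the inner limit, I would exploit the compact embedding $X_0^s(\Omega)\hookrightarrow L^2(\Omega)$ (valid since $\Omega$ is bounded and $2<2^*_{s}$): up to passing to a subsequence, $z_m\rightharpoonup z$ weakly in $X_0^s(\Omega)$ and $z_m\to z$ strongly in $L^2(\Omega)$. Since $\phi\in\mathcal{C}_0^{\infty}(\mathbb{R}^n)$ trivially satisfies the hypotheses of Lemma~\ref{cero}, the function $(-\Delta)^{s/2}\phi$ is bounded on $\mathbb{R}^n$, and consequently for each fixed $\varepsilon>0$ the rescaled function $(-\Delta)^{s/2}\phi_\varepsilon=\varepsilon^{-s}((-\Delta)^{s/2}\phi)(\cdot/\varepsilon)$ lies in $L^\infty(\mathbb{R}^n)$. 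Multiplication by such an $L^\infty$ factor preserves strong $L^2$-convergence, so
$$
\lim_{m\to\infty}\|z_m(-\Delta)^{s/2}\phi_\varepsilon\|_{L^2(\mathbb{R}^n)}=\|z\,(-\Delta)^{s/2}\phi_\varepsilon\|_{L^2(\mathbb{R}^n)}.
$$

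For the outer limit I must show $\|z\,(-\Delta)^{s/2}\phi_\varepsilon\|_{L^2}\to 0$ as $\varepsilon\to 0$ for the fixed $z\in X_0^s(\Omega)\subset L^{2^*_{s}}(\mathbb{R}^n)$. My plan is to split the integration domain at $|x|=R\varepsilon$ for a large parameter $R>1$ and apply H\"older on each piece $A$ with the scale-invariant conjugate pair $(2^*_{s},n/s)$:
$$
\int_A|z|^2|(-\Delta)^{s/2}\phi_\varepsilon|^2\,dx\le\|z\|_{L^{2^*_{s}}(A)}^2\,\|(-\Delta)^{s/2}\phi_\varepsilon\|_{L^{n/s}(A)}^2.
$$
The substitution $y=x/\varepsilon$ leaves the $L^{n/s}$-norm invariant, giving $\|(-\Delta)^{s/2}\phi_\varepsilon\|_{L^{n/s}(B_{R\varepsilon})}=\|(-\Delta)^{s/2}\phi\|_{L^{n/s}(B_R)}$ and similarly on the complement; both are finite because the pointwise bound $|(-\Delta)^{s/2}\phi(x)|\le C_1/(1+|x|^{n+s})$ from Lemma~\ref{cero} places $(-\Delta)^{s/2}\phi$ in $L^{n/s}(\mathbb{R}^n)$. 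Letting $\varepsilon\to 0$ first, the inner contribution vanishes by absolute continuity of the integral since $|B_{R\varepsilon}|\to 0$ while $z\in L^{2^*_{s}}$; then sending $R\to\infty$, the outer contribution vanishes because the $L^{n/s}$-tails of $(-\Delta)^{s/2}\phi$ are integrable.

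The main subtlety is that the H\"older pairing $(2^*_{s},n/s)$ is \emph{exactly} scale invariant, so neither factor is small by itself as $\varepsilon\to 0$: the smallness has to be extracted jointly from the shrinking inner region (via absolute continuity of $z\in L^{2^*_{s}}$) and from the $L^{n/s}$-tail decay on the outer region. This is precisely where Lemma~\ref{cero} enters crucially, by providing the quantitative pointwise decay $|(-\Delta)^{s/2}\phi(x)|\le C_1/(1+|x|^{n+s})$ that simultaneously guarantees global $L^{n/s}$-integrability and tail smallness.
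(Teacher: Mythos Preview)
Your argument is correct and follows the same overall architecture as the paper's proof: Cauchy--Schwarz reduces the problem to $\|z_m(-\Delta)^{s/2}\phi_\varepsilon\|_{L^2}$, the inner limit in $m$ is handled via the compact embedding into $L^2(\Omega)$ together with the $L^\infty$ bound $|(-\Delta)^{s/2}\phi_\varepsilon|\le C\varepsilon^{-s}$, and the outer limit in $\varepsilon$ uses the scale-invariant H\"older pair $(2^*_s,n/s)$. The only genuine difference lies in how the outer limit $\|z(-\Delta)^{s/2}\phi_\varepsilon\|_{L^2}\to 0$ is obtained. The paper approximates $z^2$ by a smooth $\eta\in C_0^\infty(\Omega)$ in $L^{n/(n-2s)}$: the piece with $\eta$ gives an explicit $\varepsilon^{n-2s}$ gain from the $L^2$ norm of $(-\Delta)^{s/2}\phi_\varepsilon$, while the remainder $z^2-\eta$ is small in $L^{n/(n-2s)}$ and paired with the scale-invariant $L^{n/s}$ norm. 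You instead split the domain at $|x|=R\varepsilon$ and extract smallness from the absolute continuity of $\|z\|_{L^{2^*_s}}$ on the shrinking inner ball and from the $L^{n/s}$-tail of $(-\Delta)^{s/2}\phi$ on the outer region. Your route is arguably cleaner in that it avoids the density step and works directly with the decay provided by Lemma~\ref{cero}; the paper's route, on the other hand, yields a quantitative rate $O(\varepsilon^{(n-2s)/2})$ for the ``good'' part, which is not needed here but can be useful elsewhere.
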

\begin{proof}
First of all note that, as a
consequence of the fact that $\{z_m\}$ is uniformly bounded in the reflexive space $X^s_{0}(\Omega)$, say by $M$, we get that there exists $z\in X^s_{0}(\Omega)$, such that, up to a subsequence,
\begin{eqnarray}
\displaystyle z_{m}&\rightharpoonup& z \qquad
\mbox{ weakly in } X^s_{0}(\Omega), \nonumber\\
\displaystyle z_{m}&\to& z \qquad
\mbox{ strongly in } L^{r}(\Omega), \quad  \, 1\leq r < 2^*_{s} \label{estrella1},\\
\displaystyle z_{m}&\to& z \qquad
\mbox{ a.e. in
} \Omega.\nonumber
\end{eqnarray}
Also it is clear that
\begin{equation}\label{I10}
|(-\Delta)^{s/2}\phi_{\varepsilon}(x)|=\varepsilon^{-s}\left|\left((-\Delta)^{s/2}\phi\right)\left(\frac{x}{\varepsilon}\right)\right|\leq C\varepsilon^{-s}.
\end{equation}
Therefore defining
$$I_1:=\left|\int_{\mathbb{R}^{n}}{z_m(x)(-\Delta)^{s/2}\phi_{\varepsilon}(x)(-\Delta)^{s/2}z_m(x)\, dx}\right|,$$
from \eqref{I10} and the fact that $\|z_m\|_{X^s_{0}(\Omega)}<M$, we get
\begin{eqnarray}
I_1&\leq&\|(-\Delta)^{s/2}z_m\|_{L^{2}(\mathbb{R}^{n})}\|z_m(-\Delta)^{s/2}\phi_{\varepsilon}\|_{L^{2}(\Omega)}\nonumber\\
&\leq&M\|(z_m-z)(-\Delta)^{s/2}\phi_{\varepsilon}\|_{L^{2}(\Omega)}+M\|z(-\Delta)^{s/2}\phi_{\varepsilon}\|_{L^{2}(\Omega)}\nonumber\\
&\leq&C\varepsilon^{-s}\|z_m-z\|_{L^{2}(\Omega)}+M\|z(-\Delta)^{s/2}\phi_{\varepsilon}\|_{L^{2}(\Omega)}.\label{I101}
\end{eqnarray}
Since $\|z\|_{X^s_{0}(\Omega)}\leq M$ then $\|z\|_{L^{2^*_s}(\Omega)}\leq C$, that is $z^2\in L^{\frac{n}{n-2s}}(\Omega)$. Hence, for every $\rho>0$ there exits $\eta\in C^{\infty}_{0}(\Omega)$ such that
\begin{equation}\label{I102}
\|z^2-\eta\|_{L^{\frac{n}{n-2s}}(\Omega)}\leq\rho.
\end{equation}
Then, by \eqref{I10}, \eqref{I102} and  H\"{o}lder's inequality with $p=n/n-2s$ we obtain that
\begin{eqnarray}
\|z(-\Delta)^{s/2}\phi_{\varepsilon}\|^2_{L^{2}(\Omega)}&\leq&\int_{\mathbb{R}^{n}}{|z^2(x)-\eta(x)||(-\Delta)^{s/2}\phi_{\varepsilon}(x)|^2\, dx}\nonumber\\
&+&\int_{\mathbb{R}^{n}}{|\eta(x)||(-\Delta)^{s/2}\phi_{\varepsilon}(x)|^2\, dx}\nonumber\\
&\leq&\|z^2-\eta\|_{L^{\frac{n}{n-2s}}(\Omega)}\|(-\Delta)^{s/2}\phi_{\varepsilon}\|^2_{L^{\frac{n}{s}}(\mathbb{R}^{n})}\nonumber\\
&+&\|\eta\|_{L^{\infty}(\Omega)}\|(-\Delta)^{s/2}\phi_{\varepsilon}\|^2_{L^{2}(\mathbb{R}^{n})}\nonumber\\
&\leq&\rho\varepsilon^{-2s}\left(\int_{\mathbb{R}^{n}}{\left|\left((-\Delta)^{s/2}\phi\right)\left(\frac{x}{\varepsilon}\right)\right|^{\frac{n}{s}}\, dx}\right)^{\frac{2s}{n}}\nonumber\\
&+&C\varepsilon^{-2s}\int_{\mathbb{R}^{n}}{\left|\left((-\Delta)^{s/2}\phi\right)\left(\frac{x}{\varepsilon}\right)\right|^{2}\, dx}\nonumber\\
&\leq&\rho\left(\int_{\mathbb{R}^{n}}{|(-\Delta)^{s/2}\phi(z)|^{\frac{n}{s}}\, dz}\right)^{\frac{2s}{n}}\nonumber\\
&+&C\varepsilon^{n-2s}\int_{\mathbb{R}^{n}}{|(-\Delta)^{s/2}\phi(z)|^{2}\, dz}\nonumber\\
&\leq&C\rho+C\varepsilon^{n-2s}.\label{I103}
\end{eqnarray}
Hence, using \eqref{estrella1}, from \eqref{I101}, \eqref{I103} and the fact that $n>2s$, it follows that
$${\limm_{\varepsilon\to0}}\limm_{m\to\infty}{I_1}\leq\limm_{\varepsilon\to0}{C\left(\rho+\varepsilon^{n-2s}\right)^{\frac{1}{2}}}=C\rho^\frac{1}{2}.$$
Since $\rho>0$ is fixed but arbitrarily small, we conclude the proof of Lemma \ref{I1}.
\end{proof}
Also, we have the following.
\begin{lemma}\label{I2}
With the same assumptions of Lemma \ref{I1} we have that
\begin{equation}\label{I21}
{\limm_{\varepsilon\to0}}\limm_{m\to\infty}{\left|\int_{\mathbb{R}^{n}}{(-\Delta)^{s/2}z_m(x)B(z_m,\phi_{\varepsilon})(x)\, dx}\right|=0,}
\end{equation}
where $B$ is defined in \eqref{bilinear}.
\end{lemma}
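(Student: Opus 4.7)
The plan is to apply the Cauchy--Schwarz inequality in $L^2(\mathbb{R}^n)$, which reduces the claim to proving that $\|B(z_m,\phi_\varepsilon)\|_{L^2(\mathbb{R}^n)}$ vanishes in the iterated limit. Indeed, using \eqref{motorista} together with $\|z_m\|_{X_0^s(\Omega)}\leq M$,
\[
\left|\int_{\mathbb{R}^n}(-\Delta)^{s/2}z_m(x)\,B(z_m,\phi_\varepsilon)(x)\,dx\right|\leq \|(-\Delta)^{s/2}z_m\|_{L^2(\mathbb{R}^n)}\,\|B(z_m,\phi_\varepsilon)\|_{L^2(\mathbb{R}^n)}\leq M\,\|B(z_m,\phi_\varepsilon)\|_{L^2(\mathbb{R}^n)},
\]
so it suffices to control the right-hand factor.

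For the $L^2$-norm of $B(z_m,\phi_\varepsilon)$, the idea is to apply Cauchy--Schwarz pointwise on the inner integral, splitting the kernel as $|x-y|^{-(n+s)}=|x-y|^{-(n+2s)/2}\cdot|x-y|^{-n/2}$. This yields the pointwise bound
\[
|B(z_m,\phi_\varepsilon)(x)|^2\leq 4\,A_m(x)\,P_\varepsilon(x),
\]
where
\[
A_m(x):=\int_{\mathbb{R}^n}\frac{|z_m(x)-z_m(y)|^2}{|x-y|^{n+2s}}\,dy,\qquad P_\varepsilon(x):=\int_{\mathbb{R}^n}\frac{|\phi_\varepsilon(x)-\phi_\varepsilon(y)|^2}{|x-y|^{n}}\,dy.
\]
Note that $\int_{\mathbb{R}^n}A_m\,dx=\|z_m\|^2_{X_0^s(\Omega)}\leq M^2$. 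By the scaling $\phi_\varepsilon(\cdot)=\phi(\cdot/\varepsilon)$ one has $P_\varepsilon(x)=P_1(x/\varepsilon)$, and since $\phi\in \mathcal{C}^\infty_0(\mathbb{R}^n)$ one verifies that $P_1$ is bounded on $\mathbb{R}^n$ and decays like $|x|^{-n}$ at infinity, so (using $n>2s$) $P_1\in L^{n/(2s)}(\mathbb{R}^n)$ and
\[
\|P_\varepsilon\|_{L^{n/(2s)}(\mathbb{R}^n)}=\varepsilon^{2s}\,\|P_1\|_{L^{n/(2s)}(\mathbb{R}^n)}\longrightarrow 0\quad\text{as }\varepsilon\to 0.
\]

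To conclude, the approximation scheme of Lemma~\ref{I1} can be adapted. Writing $z_m=(z_m-z)+z$ and using $(a+b)^2\leq 2a^2+2b^2$ we obtain the pointwise estimate $A_m\leq 2A_{z_m-z}+2A_z$, reducing the analysis to the two pieces $\int A_z\,P_\varepsilon\,dx$ and $\int A_{z_m-z}\,P_\varepsilon\,dx$. The first one tends to $0$ as $\varepsilon\to 0$ by dominated convergence, since $A_z\in L^1(\mathbb{R}^n)$, $\|P_\varepsilon\|_{L^\infty}\leq\|P_1\|_{L^\infty}$, and $P_\varepsilon(x)\to 0$ pointwise for $x\neq 0$. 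For the second term, one applies H\"older's inequality with conjugate exponents $n/(n-2s)$ and $n/(2s)$ and then, exactly as in the proof of Lemma~\ref{I1}, approximates $(z_m-z)^2$ in $L^{n/(n-2s)}(\Omega)$ by a smooth compactly supported function $\eta$; after sending $m\to\infty$ first and exploiting the strong convergence $z_m\to z$ in $L^r(\Omega)$ for every $r<2^*_s$ recorded in \eqref{estrella1}, the $\varepsilon^{2s}$-gain from $\|P_\varepsilon\|_{L^{n/(2s)}}$ absorbs the remaining contribution. I expect the main obstacle to lie precisely in this last step, namely in bounding the quadratic density $A_{z_m-z}$ in a norm compatible with $P_\varepsilon\in L^{n/(2s)}$ uniformly in $m$, and in unwinding the interplay between the rescaling in $\varepsilon$ and the compactness in $m$ in the right order so that the iterated limit truly vanishes.
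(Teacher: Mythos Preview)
Your asymmetric kernel splitting $|x-y|^{-(n+s)}=|x-y|^{-(n+2s)/2}\cdot|x-y|^{-n/2}$ fails at the very first step: the quantity $P_1(x)=\int_{\mathbb{R}^n}\frac{|\phi(x)-\phi(y)|^2}{|x-y|^{n}}\,dy$ is not bounded, and in fact is $+\infty$ whenever $\phi(x)\neq 0$. Indeed, if $\phi(x)=c\neq 0$ then $\phi(y)=0$ for $|y|$ large, so the integrand equals $c^2|x-y|^{-n}$ there, and $\int_{|y|>R}|x-y|^{-n}\,dy$ diverges logarithmically. Hence the pointwise estimate $|B(z_m,\phi_\varepsilon)(x)|^2\leq 4A_m(x)P_\varepsilon(x)$ is vacuous on the support of $\phi_\varepsilon$, and nothing downstream (the $L^{n/(2s)}$ scaling, the dominated-convergence argument) can be salvaged.

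Even if you repaired this by choosing a different splitting that keeps $P_\varepsilon$ finite, the second obstacle you yourself flag is genuine: the density $A_{z_m-z}(x)=\int\frac{|(z_m-z)(x)-(z_m-z)(y)|^2}{|x-y|^{n+2s}}\,dy$ is only known to lie in $L^1$ with $\int A_{z_m-z}=\|z_m-z\|_{X_0^s}^2$, and this quantity does \emph{not} tend to $0$ (you have only weak convergence in $X_0^s$). There is no reason for $A_{z_m-z}$ to be bounded in $L^{n/(n-2s)}$ uniformly in $m$, and approximating $(z_m-z)^2$ in $L^{n/(n-2s)}$ is irrelevant since $A_{z_m-z}$ is not $(z_m-z)^2$.

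The paper avoids both problems by using the \emph{symmetric} Cauchy--Schwarz splitting $|B(f,g)(x)|^2\leq B(f,f)(x)\,B(g,g)(x)$, where all bilinear forms keep the same kernel $|x-y|^{-(n+s)}$. Then $B(\phi_\varepsilon,\phi_\varepsilon)(x)\leq C\varepsilon^{-s}$ is finite (Lemma~\ref{cero}), and $\int_{\mathbb{R}^n}B(z_m-z,z_m-z)\,dx=2\|z_m-z\|^2_{X_0^{s/2}(\Omega)}$. The decisive step is the interpolation
\[
\|z_m-z\|^2_{X_0^{s/2}(\Omega)}=\int_{\mathbb{R}^n}(z_m-z)(-\Delta)^{s/2}(z_m-z)\,dx\leq \|z_m-z\|_{L^2(\Omega)}\,\|z_m-z\|_{X_0^s(\Omega)},
\]
which converts one factor into $\|z_m-z\|_{L^2}$, vanishing as $m\to\infty$ and absorbing the $\varepsilon^{-s}$. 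For the piece $B(z,\phi_\varepsilon)$ the paper then uses the identity $(-\Delta)^{s/2}(z^2)=2z(-\Delta)^{s/2}z-B(z,z)$ together with the decay bounds of Lemma~\ref{cero}. This interpolation trick is the missing idea in your approach.
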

\begin{proof}
Let
$$I_2:=\left|\int_{\mathbb{R}^{n}}{(-\Delta)^{s/2}z_m(x)B(z_m,\phi_{\varepsilon})(x)\, dx}\right|.$$
Since $\|z_m\|_{X_0^s(\Omega)}\leq M$, then
\begin{eqnarray}
I_2&\leq&M\|B(z_m,\phi_{\varepsilon})\|_{L^{2}(\mathbb{R}^{n})}\nonumber\\
&\leq&M\|B(z_m-z,\phi_{\varepsilon})\|_{L^{2}(\mathbb{R}^{n})}+M\|B(z,\phi_{\varepsilon})\|_{L^{2}(\mathbb{R}^{n})},\label{I22}
\end{eqnarray}
where $z$ is, as in Lemma \ref{I1}, the weak limit of the sequence $\{z_m\}$ in $X^s_{0}(\Omega)$.
We  estimate each of the summands in the previous inequality. Let
\begin{equation}\label{phi0}
\psi(x):=\frac{1}{1+|x|^{n+s}}\quad\mbox{and}\quad\psi_{\varepsilon}(x):=\psi\left(\frac{x}{\varepsilon}\right).
\end{equation}
By Lemma \ref{cero} applied to $\phi$, we note that
\begin{equation}\label{I23}
B(\phi_{\varepsilon},\phi_{\varepsilon})(x) = \varepsilon^{-s}B(\phi,\phi)\left(\frac{x}{\varepsilon}\right)\leq C\varepsilon^{-s}\psi\left(\frac{x}{\varepsilon}\right)= C\frac{\varepsilon^{-s}}{1+\left|\frac{x}{\varepsilon}\right|^{n+s}}\leq C\varepsilon^{-s}.
\end{equation}
Therefore, by  Cauchy-Schwarz inequality and \eqref{I23}, it follows that
\begin{eqnarray}
\, \, \, \, \, \|B(z_m-z,\phi_{\varepsilon})\|^2_{L^{2}(\mathbb{R}^{n})}&\leq&\int_{\mathbb{R}^{n}}{B(z_m-z,z_m-z)(x)B(\phi_{\varepsilon},\phi_{\varepsilon})(x)\, dx}\label{I224}\\
&\leq&C\varepsilon^{-s}\int_{\mathbb{R}^{n}}{B(z_m-z,z_m-z)(x)\, dx}\nonumber\\
&=&C\varepsilon^{-s}\|z_m-z\|^2_{X_0^{\frac{s}{2}}(\Omega)}\nonumber\\
&=&C\varepsilon^{-s}\int_{\mathbb{R}^{n}}{(z_m-z)(x)(-\Delta)^{s/2}(z_m-z)(x)\, dx}\nonumber\\
&\leq&C\varepsilon^{-s}\|z_m-z\|_{L^2(\Omega)}\|(-\Delta)^{s/2}(z_m-z)\|_{L^2(\mathbb{R}^{n})}\nonumber\\
&\leq&C\varepsilon^{-s}\|z_m-z\|_{L^2(\Omega)}.\label{I24}
\end{eqnarray}
On the other hand, for a suitable function $f$, we have that
\begin{eqnarray}
\int_{\mathbb{R}^{n}}{z^2(x)(-\Delta)^{s/2}f(x)\, dx}&=&\int_{\mathbb{R}^{n}}{f(x)(-\Delta)^{s/2}z^2(x)\, dx}\nonumber\\
&=&\int_{\mathbb{R}^{n}}{f(x)\left(2z(x)(-\Delta)^{s/2}z(x)-B(z,z)(x)\right)\, dx}.\label{I2500}
\end{eqnarray}
Then, arguing as in \eqref{I224} and applying \eqref{I2500} with
$f:=\psi_{\varepsilon}(x)$, from \eqref{I23} we get
that
\begin{eqnarray}
\|B(z,\phi_{\varepsilon})\|^2_{L^2(\mathbb{R}^{n})}&\leq&\int_{\mathbb{R}^{n}}{B(z,z)(x)B(\phi_{\varepsilon},\phi_{\varepsilon})(x)\, dx}\nonumber\\
&\leq&C\varepsilon^{-s}\int_{\mathbb{R}^{n}}{B(z,z)(x)\psi_{\varepsilon}(x)\, dx}\nonumber\\
&\leq&C\varepsilon^{-s}\int_{\mathbb{R}^{n}}{\left(-z^2(x)(-\Delta)^{s/2}\psi_{\varepsilon}(x)+2z(x)\psi_{\varepsilon}(x)(-\Delta)^{s/2}z(x)\right)\, dx}\nonumber\\
&:=&I_{2,1}+I_{2,2}.\label{I26}
\end{eqnarray}
We estimate now $I_{2,1}$ and $I_{2,2}$ separately. Let $\rho>0$. By Lemma \ref{cero} applied to $\psi$ and \eqref{I10}, it follows that
\begin{eqnarray}
|I_{2,1}|&\leq&C\varepsilon^{-2s}\int_{\mathbb{R}^{n}}{z^2(x)\left|\left((-\Delta)^{s/2}\psi\right)\left(\frac{x}{\varepsilon}\right)\right|\, dx}\nonumber\\
&\leq&C\varepsilon^{-2s}\int_{\mathbb{R}^{n}}{z^2(x)\psi\left(\frac{x}{\varepsilon}\right)\, dx}\nonumber\\
&\leq&C\varepsilon^{-2s}\int_{\mathbb{R}^{n}}{(z^2-\eta)(x)\psi_{\varepsilon}(x)\, dx}+\varepsilon^{-2s}\int_{\mathbb{R}^{n}}{\eta(x)\psi_{\varepsilon}(x)\, dx}\label{I27},
\end{eqnarray}
where $\eta\in C^{\infty}_{0}(\Omega)$ is the function that satisfies \eqref{I102}. Then from \eqref{I27} we obtain
\begin{eqnarray}
|I_{2,1}|&\leq&C \rho\varepsilon^{-2s}\left\|\psi_{\varepsilon}\right\|_{L^{\frac{n}{2s}}(\mathbb{R}^{n})}+C \varepsilon^{-2s}\|\eta\|_{L^{\infty}(\mathbb{R}^{n})}\left\|\psi_{\varepsilon}\right\|_{L^{1}(\mathbb{R}^{n})}\nonumber\\
&\leq&C \rho\|\psi\|_{L^{\frac{n}{2s}}(\mathbb{R}^{n})}+C \varepsilon^{n-2s}\|\eta\|_{L^{\infty}(\mathbb{R}^{n})}\|\psi\|_{L^{1}(\mathbb{R}^{n})}.\label{I28}
\end{eqnarray}
On the other hand,
\begin{equation}\label{extra}
|I_{2,2}|\leq C\varepsilon^{-s}\|(-\Delta)^{s/2}z\|_{L^2(\mathbb{R}^{n})}\left\|z\psi_{\varepsilon}\right\|_{L^2(\Omega)}\leq C\varepsilon^{-s}\left\|z\psi_{\varepsilon}\right\|_{L^2(\Omega)}.
\end{equation}
Therefore, by \eqref{I102}, we get
\begin{eqnarray}
|I_{2,2}|^2&\leq&C\varepsilon^{-2s}\left(\int_{\Omega}{|(z^2-\eta)(x)|\left|\psi_{\varepsilon}(x)\right|^2\, dx}+\int_{\mathbb{R}^{n}}{\eta\left|\psi_{\varepsilon}(x)\right|^2\, dx}\right)\nonumber\\
&\leq&C\varepsilon^{-2s}\left(\rho \|\psi_{\varepsilon}\|^2_{L^{\frac{n}{s}}(\mathbb{R}^{n})}+\|\eta\|_{L^{\infty}(\mathbb{R}^{n})}\left\|\psi_{\varepsilon}\right\|^2_{L^{2}(\mathbb{R}^{n})}\right)\nonumber\\
&\leq&C\rho\|\psi\|^{2}_{L^{\frac{n}{s}}(\mathbb{R}^{n})}+C\varepsilon^{n-2s}\|\eta\|_{L^{\infty}(\mathbb{R}^{n})}\|\psi\|^2_{L^{2}(\mathbb{R}^{n})}.\label{I29}
\end{eqnarray}
Then, by \eqref{I28} and \eqref{I29}, it follows from \eqref{I26} that
\begin{equation}\label{I210}
\|B(z,\phi_{\varepsilon})\|^2_{L^2(\mathbb{R}^{n})}\leq C\left(\rho+\rho^\frac{1}{2}\right)+{C}\left(\varepsilon^{n-2s}+\varepsilon^{\frac{n-2s}{2}}\right).
\end{equation}
Hence, from \eqref{estrella1}, \eqref{I24} and \eqref{I210}, since $n>2s$, we obtain
\begin{eqnarray}
&&{\limm_{\varepsilon\to0}}\limm_{m\to\infty}\left(\|B(z_m-z,\phi_{\varepsilon})\|^2_{L^{2}(\mathbb{R}^{n})}+\|B(z,\phi_{\varepsilon})\|^2_{L^2(\mathbb{R}^{n})}\right)\nonumber\\
&\leq&{\limm_{\varepsilon\to0}}\, C\left(\rho^{\frac{1}{2}}+\varepsilon^{\frac{n-2s}{2}}\right)\nonumber\\
&=&C\rho^{\frac{1}{2}}.\nonumber
\end{eqnarray}
Thus, since $\rho$ is an arbitrary positive value,
\begin{equation}\label{I211}
{\limm_{\varepsilon\to0}}\limm_{m\to\infty}\Big(
\|B(z_m-z,\phi_{\varepsilon})\|^2_{L^{2}(\mathbb{R}^{n})}+
\|B(z,\phi_{\varepsilon})\|^2_{L^2(\mathbb{R}^{n})}\Big)=0.
\end{equation}
Finally, by \eqref{I22} and \eqref{I211}, we conclude that
$${\limm_{\varepsilon\to0}}\limm_{m\to\infty}{|I_2|}=0.$$
\end{proof}
Now we can prove the principal result of this subsection:
\begin{lemma}\label{strongly_convergent} If $u=0$ is the only
critical point of $\widetilde{\mathcal J}_{s,\,\lambda}$ in
$X^{s}_{0}(\Omega)$, then $\widetilde{\mathcal J}_{s,\,\lambda}$
satisfies a local Palais Smale condition below the critical level
\begin{equation}\label{cstar}
c^*=\frac{s}{n}S(n,s)^{\frac{n}{2s}},
\end{equation}
where $S(n,s)$ is the Sobolev constant defined in \eqref{Ss}.
\end{lemma}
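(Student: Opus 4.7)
The plan is to implement the classical concentration-compactness strategy of P.L.\ Lions in its nonlocal incarnation, building on \cite[Theorem 1.5]{pp} and using the commutator-type estimates of Lemmas \ref{cero}, \ref{I1} and \ref{I2} to handle the absence of a Leibniz rule for $(-\Delta)^{s/2}$. Fix a Palais--Smale sequence $\{u_m\}\subset X_0^s(\Omega)$ at level $c<c^*$, so that $\widetilde{\mathcal J}_{s,\,\lambda}(u_m)\to c$ and $\widetilde{\mathcal J}_{s,\,\lambda}'(u_m)\to 0$ in $X_0^s(\Omega)^*$. First I would show $\{u_m\}$ is bounded in $X_0^s(\Omega)$ by computing $\widetilde{\mathcal J}_{s,\,\lambda}(u_m)-\frac{1}{2^*_s}\langle \widetilde{\mathcal J}_{s,\,\lambda}'(u_m),u_m\rangle$ and observing that the non-critical part of $G_\lambda$ grows slower than $\|u_m\|^2_{X_0^s(\Omega)}$ (using $q+1<2$ and $u_0\in L^\infty(\Omega)$). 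By reflexivity and the compact embeddings $X_0^s(\Omega)\hookrightarrow L^r(\Omega)$ for $r<2^*_s$, extract a subsequence with $u_m\rightharpoonup u_\infty$ weakly in $X_0^s(\Omega)$, strongly in every subcritical $L^r(\Omega)$, and a.e.\ in $\Omega$. Passing to the limit in $\langle \widetilde{\mathcal J}_{s,\,\lambda}'(u_m),\varphi\rangle$ with $\varphi\in C_c^\infty(\Omega)$ shows that $u_\infty$ is a critical point of $\widetilde{\mathcal J}_{s,\,\lambda}$, and the standing hypothesis then forces $u_\infty\equiv 0$.

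The heart of the argument is to exclude concentration at the critical scale, so that in fact $u_m\to 0$ strongly in $L^{2^*_s}(\Omega)$. Applying \cite[Theorem 1.5]{pp} to $\{u_m\}$ yields at most countable families $\{x_j\}\subset \overline{\Omega}$ and $\{\mu_j\},\{\nu_j\}\subset (0,\infty)$ together with measures $\mu,\nu$ such that, in the weak$^*$ sense,
\begin{equation*}
|(-\Delta)^{s/2}u_m|^2 \rightharpoonup \mu \geq \sum_j \mu_j\,\delta_{x_j}, \qquad |u_m|^{2^*_s} \rightharpoonup \nu = \sum_j \nu_j\,\delta_{x_j},
\end{equation*}
with the reverse Sobolev inequality $\mu_j\geq S(n,s)\,\nu_j^{2/2^*_s}$ at every atom. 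To rule out each atom I would fix an index $j$ and test $\langle \widetilde{\mathcal J}_{s,\,\lambda}'(u_m), u_m\phi_\varepsilon(\cdot-x_j)\rangle\to 0$, with $\phi_\varepsilon$ the cutoff in \eqref{phieps}. Expanding the quadratic form via
\begin{equation*}
(-\Delta)^{s/2}(u_m\phi_\varepsilon)=\phi_\varepsilon(-\Delta)^{s/2}u_m + u_m(-\Delta)^{s/2}\phi_\varepsilon - B(u_m,\phi_\varepsilon),
\end{equation*}
Lemmas \ref{I1} and \ref{I2} guarantee that the two cross-terms are $o(1)$ in the iterated limit $\limm_{\varepsilon\to 0}\limm_{m\to\infty}$. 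The subcritical contribution from $g_\lambda$ vanishes by strong $L^r$ convergence of $u_m$ for $r<2^*_s$, while the critical piece yields $\int\phi_\varepsilon\,d\nu\to\nu_j$. We are left with $\mu_j\leq\nu_j$, which combined with $\mu_j\geq S(n,s)\nu_j^{2/2^*_s}$ produces the dichotomy $\nu_j=0$ or $\nu_j\geq S(n,s)^{n/(2s)}$.

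Finally, a Brezis--Lieb type decomposition applied to the energy identity, together with the fact that the subcritical pieces disappear in the limit because $u_m\to 0$ in $L^{q+1}(\Omega)$, gives
\begin{equation*}
c=\limm_m \widetilde{\mathcal J}_{s,\,\lambda}(u_m) \geq \Big(\frac{1}{2}-\frac{1}{2^*_s}\Big)\sum_j \nu_j = \frac{s}{n}\sum_j \nu_j.
\end{equation*}
Since $c<c^*=(s/n)S(n,s)^{n/(2s)}$, no index $j$ can have $\nu_j\geq S(n,s)^{n/(2s)}$, so every $\nu_j$ must vanish. Therefore $u_m\to 0$ in $L^{2^*_s}(\Omega)$, and plugging this into $\langle \widetilde{\mathcal J}_{s,\,\lambda}'(u_m),u_m\rangle\to 0$ yields $\|u_m\|_{X_0^s(\Omega)}\to 0$. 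The main technical obstacle is the localization step: without a Leibniz rule for $(-\Delta)^{s/2}$, one must carefully control the two commutator terms $u_m(-\Delta)^{s/2}\phi_\varepsilon$ and $B(u_m,\phi_\varepsilon)$ as $m\to\infty$ and then $\varepsilon\to 0$, which is precisely the content of Lemmas \ref{cero}, \ref{I1} and \ref{I2}.
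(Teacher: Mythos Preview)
Your proof follows the same concentration--compactness scheme as the paper and correctly singles out Lemmas~\ref{cero}, \ref{I1}, \ref{I2} as the substitute for a Leibniz rule. The one substantive tactical difference is that the paper does \emph{not} work directly with $\widetilde{\mathcal J}_{s,\lambda}$ and $u_m$: it translates back, setting $z_m=u_m+u_0$, checks that $\{z_m\}$ is a Palais--Smale type sequence for the \emph{original} functional $\mathcal J_{s,\lambda}$ at a level $\le c_1+\mathcal J_{s,\lambda}(u_0)$, and applies \cite[Theorem~1.5]{pp} to $(z_m)_+$, testing with $\phi_\varepsilon(z_m)_+$. The payoff of this detour is twofold. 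First, the nonlinearity in $\mathcal J_{s,\lambda}$ is a pure power, so the energy identity $\mathcal J_{s,\lambda}(z_m)-\tfrac12\langle\mathcal J'_{s,\lambda}(z_m),z_m\rangle$ yields $\mathcal J_{s,\lambda}(u_0)+\tfrac{s}{n}\nu_{k_0}$ on the nose, with no cross-terms to chase. Second, working with the positive part from the start keeps the concentration measure and the nonlinear term aligned. In your direct approach the nonlinearity $g_\lambda(x,t)$ vanishes for $t<0$, so the critical contribution captured by the localized test is actually $\int\phi_\varepsilon(u_m)_+^{2^*_s}$, not $\int\phi_\varepsilon|u_m|^{2^*_s}$; you should either apply the concentration--compactness result to $(u_m)_+$ (still bounded in $X_0^s(\Omega)$) together with the inequality $\langle u_m,(u_m)_+\phi_\varepsilon\rangle_{X_0^s}\ge\langle (u_m)_+,(u_m)_+\phi_\varepsilon\rangle_{X_0^s}$, or carry a separate measure $\nu^+$ through the argument. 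Similarly, the ``subcritical pieces'' in your final energy estimate include mixed terms of the type $\int_\Omega u_0\,(u_m)_+^{2^*_s-1}$, which are not controlled by $L^{q+1}$ convergence alone; they do vanish, but via a.e.\ convergence together with boundedness in $L^{2^*_s}$, and this step deserves a word. With these adjustments your route is a valid alternative to the paper's translation trick.
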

\begin{proof}
Let $\{u_{m}\}$ be a Palais-Smale sequence for $\widetilde{\mathcal J}_{s,\,\lambda}$
verifying
\begin{equation}\label{cc}
\widetilde{\mathcal J}_{s,\,\lambda}(u_m)\to
c_1<{c^*}\qquad\mbox{and}\qquad\widetilde{\mathcal J}_{s,\,\lambda}'(u_m)\to 0.
\end{equation}
{Then, since there exists $M>0$ such that $\|u_m\|_{X_0^s(\Omega)}\leq M$, and, by hypothesis $u=0$ is the unique critical point of $\widetilde{\mathcal J}_{s,\,\lambda}$, it follows that
\begin{eqnarray}
\displaystyle u_{m}&\rightharpoonup& 0 \qquad
\mbox{ weakly in } X^s_{0}(\Omega), \nonumber\\
\displaystyle u_{m}&\to& 0 \qquad
\mbox{ strongly in } L^{r}(\Omega), \quad  1\leq r < 2^*_{s}, \label{estrella0}\\
\displaystyle u_{m}&\to& 0 \qquad
\mbox{ a.e. in
} \Omega.\nonumber
\end{eqnarray}
}
Also, since $u_0$ is a critical point of $\mathcal J_{s,\,\lambda}$, we have that
{
\begin{eqnarray}
\mathcal J_{s,\,\lambda}(z_m)&=&\widetilde{\mathcal J}_{s,\,\lambda}(u_m)+\mathcal J_{s,\,\lambda}(u_0)\nonumber\\
&+&\lambda\int_{\Omega}{\left(\frac{(u_0+(u_m)_+)^{q+1}}{q+1}+u_0^q(u_m-(u_m)_+)-\frac{(u_0+u_m)_{+}^{q+1}}{q+1}\right)\, dx}\nonumber\\
&+&\int_{\Omega}{\left(\frac{(u_0+(u_m)_+)^{2^*_s}}{2^*_s}+u_0^{2^*_s-1}(u_m-(u_m)_+)-\frac{(u_0+u_m)_{+}^{2^*_s}}{2^*_s}\right)\, dx}\nonumber\\
&\leq&\widetilde{\mathcal J}_{s,\,\lambda}(u_m)+\mathcal J_{s,\,\lambda}(u_0), \label{tesis1}
\end{eqnarray}
}
where
\begin{equation}\label{zm}
z_{m}=u_{m}+u_{0}.
\end{equation}
{
Moreover, for every $\varphi\in X_0^s(\Omega)$,
\begin{eqnarray}
\langle\mathcal J_{s,\,\lambda}'(z_{m}),\varphi\rangle&=&\langle\widetilde{\mathcal J}_{s,\,\lambda}'(u_{m}),\varphi\rangle\nonumber\\
&+&\int_{\Omega}{\left(\lambda(u_0+(u_m)_+)^q+(u_0+(u_m)_+)^{2^*_s-1}\right)\varphi\, dx}\nonumber\\
&-&\int_{\Omega}{\left(\lambda(u_0+u_m)_{+}^q+(u_0+u_m)_+^{2^*_s-1}\right)\varphi\, dx}.
\label{tesis20}
\end{eqnarray}
Then, by \eqref{cc},  \eqref{estrella0} and \eqref{tesis20} we obtain that
\begin{equation}\label{tesis2}
{\mathcal J}_{s,\,\lambda}'(z_m)\to 0.
\end{equation}
}
From  \eqref{tesis1} and \eqref{tesis2} we get that the sequence
$\{z_m\}$ is uniformly bounded in $X^s_{0}(\Omega)$. As a
consequence, and the fact that $u=0$ is the unique critical point of $\mathcal J_{s,\,\lambda}$, up to a subsequence, we get that
\begin{eqnarray}
\displaystyle z_{m}&\rightharpoonup& u_0 \qquad
\mbox{ weakly in } X^s_{0}(\Omega), \nonumber\\
\displaystyle z_{m}&\to& u_0 \qquad
\mbox{ strongly in } L^{r}(\Omega), \quad  \, 1\leq r < 2^*_{s}, \label{estrella}\\
\displaystyle z_{m}&\to& u_0 \qquad
\mbox{ a.e. in
} \Omega.\nonumber
\end{eqnarray}

Following \cite{Mejers-Serrin} it is easy to prove that $X_0^s(\Omega)$ could also be defined as the closure of $C^\infty_0(\Omega)$ with respect to the $X_0^s(\Omega)$--norm (see also \cite{fsvnew}). Hence, applying \cite[Theorem~1.5]{pp} we have that there exist an index set $I\subseteq\mathbb{N}$,
a sequence of points $\{ x_k\}_{k\in I}\subset\Omega$, and two sequences of nonnegative real numbers
$\{ \mu_k\}_{k\in I},\,\{ \nu_k\}_{k\in I}$, such that
\begin{equation}\label{PS1}
|(-\Delta)^{s/2}{(z_m)_+}|^2 \rightarrow \mu \geq
|(-\Delta)^{s/2}u_0|^2  + \sum_{k\in I}
\mu_{k}\delta_{x_{k}}.
\end{equation}
Moreover,
\begin{equation}\label{PS2}
|{(z_{m})_+}|^{2^*_{s}} \to
\nu=|u_{0}|^{2^*_{s}} + \sum_{k\in I}
\nu_{k}\delta_{x_{k}},
\end{equation}
in the sense of measures, with
\begin{equation}\label{nuk}
\nu_{k}\leq S(n,s)^{-\frac{2^*_{s}}{2}}\mu_{k}^{\frac{2^*_{s}}{2}}\,\,\,\,\, \mbox{for every}\,\, k\in I\,.
\end{equation}
Here $\delta_{x_k}$ denotes the Dirac delta at $x_k$, while $S(n,s)$ is the constant given in \eqref{Ss}\,.
We fix $k_0\in I$, and we consider $\phi\in
\mathcal{C}_0^{\infty}(\mathbb{R}^{n})$ a nonincreasing cut-off
function satisfying
\begin{equation}\label{phi}
\phi = 1\, \mbox{ in }\, B_{1}(x_{k_{0}})\quad\mbox{and}\quad
\phi = 0\, \mbox{ in }\, B_{2}(x_{k_{0}})^{c}.
\end{equation}
Set now
\begin{equation}\label{phieps1}
\phi_{\varepsilon}(x)=\phi(x/\varepsilon), \quad x\in \mathbb R^n.
\end{equation}
Taking the  derivative of the identity given in \eqref{motorista}, see also \cite[Lemma 16]{sv}, for any $u, \varphi \in X_0^s(\Omega)$ we obtain that
\begin{equation}\label{look}
\int_{\mathbb{R}^{n}\times \mathbb R^n}{\frac{(u(x)-u(y))(\varphi(x)-\varphi(y))}{|x-y|^{n+2s}}\, dx\, dy}=\int_{\mathbb{R}^{n}}{\varphi(x)(-\Delta)^su(x)\, dx}.
\end{equation}
Then, using
$\phi_{\varepsilon}{(z_{m})_+}$ as a test function in (\ref{tesis2}), by \eqref{look}, {and the fact that
$$\int_{\mathbb{R}^{n}}{(\phi_{\varepsilon}(z_m)_+)(-\Delta)^sz_m\, dx}\geq\int_{\mathbb{R}^{n}}{(\phi_{\varepsilon}(z_m)_+)(-\Delta)^s(z_m)_+\, dx},$$
we have that}
\begin{eqnarray*}
0&{\geq}&\limm_{m\to\infty}\left(\int_{\mathbb{R}^{n}}{(\phi_{\varepsilon}{(z_m)_+})(-\Delta)^s{(z_m)_+}\, dx}\right.\\
&-&\left.\left(\lambda\int_{B_{2\varepsilon}(x_{k_0})}{{((z_m)_+)}^{q+1}\phi_{\varepsilon} \,dx}+\int_{B_{2\varepsilon}(x_{k_0})}{{((z_m)_+)}^{2^*_{s}}\phi_{\varepsilon} \,dx}\right)\right).
\end{eqnarray*}
Hence,
\begin{eqnarray*}
& &\quad  \limm_{m\to\infty}\left(\int_{\mathbb{R}^{n}}{{(z_m)_+}(x)(-\Delta)^{s/2}{(z_m)_+}(x)(-\Delta)^{s/2}\phi_{\varepsilon}(x)\, dx}\right.\\
& &-2\left. \int_{\mathbb{R}^{n}}{(-\Delta)^{s/2}{(z_m)_+}(x)\int_{\mathbb{R}^{n}}{\frac{(\phi_{\varepsilon}(x)-\phi_{\varepsilon}(y))({(z_m)_+}(x)-{(z_m)_+}(y))}{|x-y|^{n+s}}\, dx\, dy}}\right)\\
& &\leq \limm_{m\to\infty}\left(\lambda\int_{B_{2\varepsilon}(x_{k_0})}{{((z_m)_+)}^{q+1}
\phi_{\varepsilon} \,
dx}+\int_{B_{2\varepsilon}(x_{k_0})}{{((z_m)_+)}^{2^*_{s}}
\phi_{\varepsilon} \,
dx}\right.\\
& &-\left.\int_{B_{2\varepsilon}(x_{k_0})}{((-\Delta)^{s/2}{(z_m)_+})^2\phi_{\varepsilon}\, dx}\right).
\end{eqnarray*}
Therefore, by (\ref{estrella}), (\ref{PS1}) and (\ref{PS2}) we get
\begin{eqnarray}
& &\quad \limm_{\varepsilon\to0}\limm_{m\to\infty}\left(\int_{\mathbb{R}^{n}}{{(z_m)_+}(x)(-\Delta)^{s/2}{(z_m)_+}(x)(-\Delta)^{s/2}\phi_{\varepsilon}(x)\, dx}\right.\nonumber\\
& &-2\left. \int_{\mathbb{R}^{n}}{(-\Delta)^{s/2}{(z_m)_+}(x)\int_{\mathbb{R}^{n}}{\frac{(\phi_{\varepsilon}(x)-\phi_{\varepsilon}(y))({(z_m)_+}(x)-{(z_m)_+}(y))}{|x-y|^{n+s}}\, dx\, dy}}\right)\nonumber\\
 & & \; \leq \limm_{\varepsilon\to0}\left(\lambda\int_{B_{2\varepsilon}(x_{k_0})}u_{0}^{q+1}
\phi_{\varepsilon} \,
dx + \int_{B_{2\varepsilon}(x_{k_0})}\phi_{\varepsilon} \,
d\nu-\int_{B_{2\varepsilon}(x_{k_0})}\phi_{\varepsilon}
\, d\mu\right).\label{lions1}
\end{eqnarray}
Since $\phi$ is a regular function with compact support is clear that satisfies the hypothesis of Lemma \ref{cero}. Therefore, by Lemma \ref{I1} and Lemma \ref{I2} applied to the sequence $\{{(z_m)_+}\}$, it follows that the left hand side of \eqref{lions1} goes to zero. That is, we obtain that
$$\limm_{\varepsilon \to 0}
\left(\int_{B_{2\varepsilon}(x_{k_0})}\phi_{\varepsilon} \,
d\nu+\lambda\int_{B_{2\varepsilon}(x_{k_0})}u_{0}^{q+1}
\phi_{\varepsilon} \,
dx-\int_{B_{2\varepsilon}(x_{k_{0}})}\phi_{\varepsilon}
\, d\mu \right)
=\nu_{k_{0}}-\mu_{k_{0}}{\geq}0.$$
Thus, from \eqref{nuk}, we have that either $\nu_{k_0}=0$ or
\begin{equation}\label{ddorm}
\nu_{k_0}\geq S(n,s)^{\frac{n}{2s}}.
\end{equation}
\\Suppose now that $\nu_{k_{0}}\neq 0$. {By \eqref{tesis1}, \eqref{tesis2} and \eqref{ddorm} we obtain that}

\begin{eqnarray*}
c_1+\mathcal J_{s,\,\lambda}(u_0)&{\geq}&\limm_{m\to\infty}{\left(\mathcal J_{s,\,\lambda}(z_{m})- \frac{1}{2}\langle
\mathcal J_{s,\,\lambda}'(z_{m}),z_{m}\rangle\right)}\\
&\geq& \lambda
\left(\frac{1}{2}-\frac{1}{q+1}
 \right)\int_{\Omega}{u_{0}^{q+1}dx}+\frac{s}{n}\int_{\Omega}{u_{0}^{2^*_{s}}dx} +\frac{s}{n}\nu_{k_0}\\
 &\geq &\mathcal J_{s,\,\lambda}(u_{0})+\frac{s}{n}S(n,s)^{\frac{n}{2s}}\nonumber\\
 &=&\mathcal J_{s,\,\lambda}(u_{0})+c^*.
\end{eqnarray*}
This is a contradiction with  \eqref{cc}. Since $k_0$ was arbitrary, we deduce that
$\nu_{k}= 0$ for all $k\in I$. As a consequence, we obtain that {$(u_{m})_{+}\to 0$} in
$L^{2^*_{s}}(\Omega)$. Note that, since $u_m$ is equal to zero outside $\Omega$, indeed we have that {$(u_{m})_{+}\to 0$} in
$L^{2^*_{s}}(\mathbb{R}^{n})$. This implies convergence of ${\lambda ((u_{m})_{+})^{q}+((u_{m})_{+})^{2^*_s-1}}$
in $L^{\frac{2n}{n+2s}}(\mathbb{R}^{n})$. Finally,  using the continuity of the inverse operator
$(-\Delta)^{-s}$, we obtain strong convergence of $u_{m}$ in
$X^s_{0}(\Omega)$.
\end{proof}
\subsection{Proof of statement $(4)$ of Theorem~\ref{lapfra0q}}\label{sec:dimtheorem11}
\
In Lemma~\ref{strongly_convergent} we have proved that if $u\equiv 0$ is the only critical point of the functional~$\widetilde{\mathcal J}_{s,\,\lambda}$, then $\widetilde{\mathcal J}_{s,\,\lambda}$ verifies the Palais--Smale condition at any level $c_1<c^*$, where $c^*$ is the critical level defined in \eqref{cstar}.

Now, we want to show that we can obtain a local (PS)$_{c}$--sequence for $\widetilde{\mathcal J}_{s,\,\lambda}$ under the critical level $c^*$.
For this, assume, without loss of generality, that $0\in \Omega$.
By \cite{cotsiolis} (see also \cite{colorado1, lieb}) the infimum in \eqref{Ss} is attained at the function
\begin{equation}\label{talenti}
u_{\varepsilon}(x)=\frac{\varepsilon^{(n-2s)/2}}{(|x|^2+\varepsilon^2)^{(n-2s)/2}},\,\,\, \varepsilon>0,
\end{equation}
that is
\begin{equation}\label{ojo}
\|(-\Delta)^{s/2}u_{\varepsilon}\|^{2}_{L^{2}(\mathbb{R}^{n})}
=\int_{\mathbb{R}^{n}\times\mathbb{R}^{n}}{\frac{|u_{\varepsilon}(x)-u_{\varepsilon}(y)|^2}{|x-y|^{n+2s}}\, dx\, dy}=S(n,s)\|u_{\varepsilon}\|_{L^{2^*_{s}}(\mathbb{R}^n)}^{2}.
\end{equation}

Also, let us introduce a cut-off function $\phi_{0}\in
C^{\infty}(\mathbb{R})$, non increasing and satisfying
$$
\phi_{0}(t) = \left\{\begin{array}{ll}
1 & \mbox{ if } 0\leq t \leq \frac{1}{2},\\
0 & \mbox{ if } t\geq 1.
\end{array}\right.
$$
For a fixed $r>0$ small enough such that
$\overline{B}_{r}\subset\Omega$, set
$\phi(x)=\phi_{r}(x)=\phi_{0}(\frac{|x|}{r})$ and consider the family of {non negative} truncated functions
\begin{equation}\label{etaeps}
\eta_{\varepsilon}(x)= \frac{\phi  u_{\varepsilon}(x)}{\|\phi
u_{\varepsilon}\|_{L^{2^*_{s}}(\Omega)}}\in X_0^{s}(\Omega).
\end{equation}
Then, we have the following.
\begin{lemma}\label{underlevel}
There exists $\varepsilon
>0$ small enough such that
\begin{equation}\label{lowlevel}
\displaystyle \sup_{t\geq 0}\widetilde{\mathcal J}_{s,\,\lambda}(t\eta_{\varepsilon}) <
{c^*}.
\end{equation}
\end{lemma}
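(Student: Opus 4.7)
The plan is to implement the classical Brezis--Nirenberg energy estimate adapted to the shifted functional $\widetilde{\mathcal{J}}_{s,\lambda}$ and to the nonlocal setting. Define $h(t):=\widetilde{\mathcal{J}}_{s,\lambda}(t\eta_\varepsilon)$ for $t\ge 0$. Since $h(0)=0$, $h$ is continuous, and $h(t)\to-\infty$ as $t\to\infty$ because $\int G_\lambda(x,t\eta_\varepsilon)\,dx$ grows like $t^{2^*_s}$, the supremum is attained at some $t_\varepsilon\ge 0$ that, using $\|\eta_\varepsilon\|_{L^{2^*_s}(\Omega)}=1$ and the uniform bound on $\|\eta_\varepsilon\|_{X_0^s(\Omega)}$, lies in a compact interval independent of $\varepsilon$.

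The key algebraic ingredient I would use is the elementary inequality $(a+b)^r\ge a^r+r a^{r-1}b+b^r$, valid for $r\ge 2$ and $a,b\ge 0$, applied with $r=2^*_s\ge 2$ to the critical part of $G_\lambda$. After cancellation of the linear cross-term with $-u_0^{2^*_s-1}\xi$, this gives
$$G_\lambda(x,\xi)\ge \frac{\xi^{2^*_s}}{2^*_s}+K_\lambda(x,\xi),\qquad \xi\ge 0,$$
where $K_\lambda(x,\xi):=\frac{\lambda}{q+1}\bigl[(u_0+\xi)^{q+1}-u_0^{q+1}\bigr]-\lambda u_0^q\xi\ge 0$ by convexity of $t\mapsto t^{q+1}$. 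Consequently, using $\|\eta_\varepsilon\|_{L^{2^*_s}(\Omega)}=1$,
$$h(t)\le \tfrac{t^2}{2}\|\eta_\varepsilon\|^2_{X_0^s(\Omega)}-\tfrac{t^{2^*_s}}{2^*_s}-\int_\Omega K_\lambda(x,t\eta_\varepsilon)\,dx.$$
Maximizing the first two terms over $t$ yields the value $\tfrac{s}{n}\|\eta_\varepsilon\|^{n/s}_{X_0^s(\Omega)}$, and the standard fractional truncation estimate $\|\eta_\varepsilon\|^2_{X_0^s(\Omega)}\le S(n,s)+O(\varepsilon^{n-2s})$ (cf.\ \cite{servadeivaldinociBN,sBNRES}) converts this into the main-term contribution $c^*+O(\varepsilon^{n-2s})$.

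The final and most delicate step is to lower-bound the $K_\lambda$ correction so that it strictly dominates the $O(\varepsilon^{n-2s})$ error. I would use the pointwise estimate
$$K_\lambda(x,\xi)\ge c\lambda\,\min\bigl(u_0^{q-1}(x)\xi^2,\,\xi^{q+1}\bigr),$$
which follows from Taylor expansion for $\xi\lesssim u_0(x)$ and from $(u_0+\xi)^{q+1}\ge \xi^{q+1}$ for $\xi\gtrsim u_0(x)$. Since $u_0\ge c\delta^s$ on $\Omega$ by Hopf's lemma (see \eqref{distancia2}) and $0\in\Omega$, $u_0$ is uniformly bounded below in a fixed ball around the concentration point. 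Substituting the explicit Talenti profile \eqref{talenti} and carrying out the integration splits naturally into two regimes: in the sub-threshold range $q<2s/(n-2s)$ the integral is of order $\varepsilon^{(n-2s)(q+1)/2}$, while in the super-threshold range it is of order $\varepsilon^{n-(n-2s)(q+1)/2}$ (with a $|\log\varepsilon|$ gain at the threshold). In both cases one verifies that the exponent is strictly smaller than $n-2s$ for every $q\in(0,1)$ and $n>2s$, so the concave correction beats the error term.

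To close the argument, either $t_\varepsilon\to 0$ along a subsequence, in which case $\sup h\to 0<c^*$ trivially, or $t_\varepsilon\ge a>0$ uniformly, in which case the previous lower bound multiplied by $t_\varepsilon^{q+1}$ remains of the same order $\varepsilon^{\gamma}$ with $\gamma<n-2s$, giving $\sup_{t\ge 0}h(t)<c^*$ for all $\varepsilon$ sufficiently small. The main technical obstacle lies in the pointwise control of $K_\lambda$ and in the two-regime asymptotic analysis of the resulting integral, which requires a careful interplay between the dimension $n$, the fractional parameter $s$, and the concavity exponent $q$.
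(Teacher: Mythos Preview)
Your overall strategy is sound, but there is a genuine gap in the low-dimensional range $2s<n<4s$, and it stems from the decision to extract the correction term exclusively from the concave part of $G_\lambda$. The pointwise bound $K_\lambda(x,\xi)\ge c\lambda\min\bigl(u_0^{q-1}\xi^2,\xi^{q+1}\bigr)$ is correct, but the subsequent claim that ``in the sub-threshold range $q<2s/(n-2s)$ the integral is of order $\varepsilon^{(n-2s)(q+1)/2}$'' is not: that exponent is precisely what one obtains from $\int_\Omega\eta_\varepsilon^{q+1}$, which would require the pointwise inequality $K_\lambda\ge c\,\xi^{q+1}$, and this is \emph{false} for small $\xi$ because $K_\lambda(x,\xi)\sim \tfrac{\lambda q}{2}u_0^{q-1}\xi^2=o(\xi^{q+1})$ as $\xi\to 0^+$ (recall $q+1<2$). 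If one honestly integrates the $\min$-bound, the region $\{t\eta_\varepsilon\lesssim 1\}=\{|x|\gtrsim\sqrt{\varepsilon}\}$ contributes the $\xi^2$-branch, and for $2s<n<4s$ this piece is exactly of order $\varepsilon^{n-2s}$ (the $L^2$-mass of $u_\varepsilon$ sits at the outer scale in low dimension), while the inner $\xi^{q+1}$-piece is only of order $\varepsilon^{n/2}\ll\varepsilon^{n-2s}$. Thus the concave correction matches the truncation error $O(\varepsilon^{n-2s})$ and cannot strictly dominate it; the argument does not close.

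The paper's proof avoids this by drawing the correction from the \emph{critical} part of $G_\lambda$ rather than from the concave one. Using $(a+b)^p\ge a^p+b^p+\mu\,a^{p-1}b$ with $p=2^*_s$ one retains, beyond $\xi^{2^*_s}/2^*_s$, a genuine cross term: for $n\ge 4s$ the quadratic piece $\tfrac{\mu}{2}\,u_0^{2^*_s-2}\xi^2$ gives a correction $\gtrsim\|\eta_\varepsilon\|_{L^2}^2\sim\varepsilon^{2s}$ (with a logarithm at $n=4s$), while for $2s<n<4s$ one instead keeps $\tfrac{\mu}{2^*_s-1}\,u_0\,\xi^{2^*_s-1}$, yielding a correction $\gtrsim\|\eta_\varepsilon\|_{L^{2^*_s-1}}^{2^*_s-1}\sim\varepsilon^{(n-2s)/2}$. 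Both exponents are strictly smaller than $n-2s$ and the proof closes in every dimension. The missing idea in your argument is precisely this: in low dimensions the gain must come from the interaction of the bubble with $u_0$ through the critical nonlinearity, not through the concave one.
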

\begin{proof}
We follow the proof of \cite[Lemma 4.4]{ABC} (see also \cite[Lemma~3.9]{colorado}).
\\Assume $n\ge 4s$. Since
\begin{equation}\label{estimate-ABC1}
(a+b)^p\geq a^p + b^p + \mu a^{p-1}b,\, \mbox{ for some $\mu>0$ and every $a,b \geq 0$, $p>1,$}
\end{equation}
then the function $G_\lambda$ defined in \eqref{G}, satisfies
\begin{equation}\label{estimatesG}
G_\lambda(u) \geq \frac{1}{2^*_{s}}{(u_+)}^{2^*_{s}}+\frac{\mu}{2}{(u_+)}^2
u_{0}^{2^*_{s} -2}.
\end{equation}
Therefore,
$$
\widetilde{\mathcal J}_{s,\,\lambda}(t\eta_{\varepsilon}) \leq
\frac{t^2}{2}\|\eta_{\varepsilon}\|_{X^s_{0}(\Omega)}^{2}
- \frac{t^{2^*_{s}}}{2^*_{s}}-\frac{t^2}{2}\mu
\int_{\Omega}u_{0}^{2^*_{s}-2}\eta_{\varepsilon}^2dx.
$$
Since $u_{0}\ge a_0>0$ in $\supp(\eta_{\varepsilon})$ we get, for any $t\geq 0$ and $\varepsilon>0$ small enough,
\begin{equation}\label{extra1}
\widetilde{\mathcal J}_{s,\,\lambda}(t\eta_{\varepsilon})\leq
\frac{t^2}{2}\|\eta_{\varepsilon}\|_{X^s_{0}(\Omega)}^{2}-\frac{t^{2^*_{s}}}{2^*_{s}}-\frac{t^2}{2}\widetilde{\mu}\|\eta_{\varepsilon}\|_{L^{2}(\Omega)}^{2}.
\end{equation}
Moreover, since $\|u_{\varepsilon}\|_{L^{2^*_{s}}(\mathbb{R}^{n})}$ is independent of $\varepsilon$,
by \cite[Proposition~21]{servadeivaldinociBN} we have
\begin{eqnarray}
\|\eta_{\varepsilon}\|_{X^s_{0}(\Omega)}^{2}
&=&\frac{\|\phi u_{\varepsilon}\|_{X_0^s(\Omega)}^{2}}{\|\phi u_{\varepsilon}\|^{2}_{L^{2^*_{s}}(\Omega)}}\nonumber\\
&\leq&\frac{\displaystyle\int_{\mathbb{R}^{n}\times\mathbb{R}^{n}}{\frac{|u_{\varepsilon}(x)-u_{\varepsilon}(y)|^2}{|x-y|^{n+2s}}\, dx\, dy}}{\|\phi u_{\varepsilon}\|^{2}_{L^{2^*_{s}}(\Omega)}}+O(\varepsilon^{n-2s})\nonumber\\
&=&S(n,s)+O(\varepsilon^{n-2s}).\label{estimatesOld}
\end{eqnarray}
Furthermore, by \cite[Lemma~3.8]{colorado} (see also \cite[Proposition~22]{servadeivaldinociBN}) it follows that
\begin{equation}\label{estimatesOld22}
\|\eta_{\varepsilon}\|_{L^{2}(\Omega)}^{2}\geq \left\{
\begin{array}{ll}
C\varepsilon^{2s} &\mbox{ if } n> 4s,\\
C\varepsilon^{2s} \log(1\slash \varepsilon) &\mbox{ if }
n=4s.
\end{array}\right.
\end{equation}
Therefore, from \eqref{extra1}, \eqref{estimatesOld} and \eqref{estimatesOld22}, we get
\begin{equation}\label{Jg}
\widetilde{\mathcal J}_{s,\,\lambda}(t\eta_{\varepsilon})\leq\frac{t^2}{2}(S(n,s)+C\varepsilon^{n-2s})-\frac{t^{2^*_{s}}}{2^*_{s}}-\frac{t^2}{2}\widetilde{C}\varepsilon^{2s}:=g(t),
\end{equation}
with $\widetilde{C}>0$. Since $\displaystyle \limm_{t\to\infty}
g(t) = -\infty$, then
$\displaystyle\sup_{t\geq 0}
g(t)$ is attained at
some $t_{\varepsilon,\lambda}:=t_{\varepsilon}\geq0$. If $t_\varepsilon= 0$, then
$$\sup_{t\geq 0} \widetilde{\mathcal J}_{s,\,\lambda}(t \eta_\varepsilon)\leq \sup_{t\geq 0} g(t)=g(0)=0$$ for any $0<\lambda<\Lambda$ and \eqref{lowlevel} is trivially verified. Now, we suppose that $t_\varepsilon>0$. Differentiating the above function $g(t)$, we obtain that
\begin{equation}\label{difJ}
0=g'(t_{\varepsilon})=
t_{\varepsilon}(S(n,s)+C\varepsilon^{n-2s})-t_{\varepsilon}^{2^*_{s}
-1}-t_{\varepsilon}\widetilde{C}\varepsilon^{2s},
\end{equation}
which implies
\begin{equation}\label{fly}
t_{\varepsilon} \leq
(S(n,s)+C\varepsilon^{n-2s})^{\frac{1}{2^*_{s}-2}}.
\end{equation}
Also we have, for $\varepsilon>0$ small enough,
\begin{equation}\label{teps}
t_\varepsilon\ge c>0.\end{equation}
Indeed from \eqref{difJ} we get
$$
t_\varepsilon^{2^*_{s}-2}=S(n,s)+C\varepsilon^{n-2s}-\widetilde{C}\varepsilon^{2s}\geq c>0,
$$
provided $\varepsilon$ is small enough. Moreover, the function
$$
t \mapsto
\frac{t^2}{2}(S(n,s)+C\varepsilon^{n-2s})
- \frac{t^{2^*_{s}}}{2^*_{s}}
$$
is increasing on
$[0,(S(n,s)+C\varepsilon^{n-2s})^{\frac{1}{2^*_{s}-2}}]$.
Whence, by \eqref{fly} and \eqref{teps}, we obtain
$$
\displaystyle \sup_{t\geq 0}
g(t)=g(t_{\varepsilon})\leq
\frac{s}{n}(S(n,s)+C\varepsilon^{n-2s})^{\frac{n}{2s}}-
\overline{C}\varepsilon^{2s},
$$
for some $\overline{C}>0$. Therefore, by \eqref{Jg}, for $n>4s$, we get that
\begin{equation}\label{estimateJ}
\sup_{t\geq0}\widetilde{\mathcal J}_{s,\,\lambda}(t\eta_{\varepsilon})\leq g(t_{\varepsilon})\leq\frac{s}{n}S(n,s)^{\frac{n}{2s}}+C\varepsilon^{n-2s}-\overline{C}\varepsilon^{2s}<\frac{s}{n}S(n,s)^{\frac{n}{2s}}=c^*.
\end{equation}
If $n=4s$ the same conclusion follows.

The last case $2s <n<4s$ follows by using the estimate \eqref{estimate-ABC1} which gives
\begin{equation}\label{estimatesG2}
G_\lambda(u) \geq \frac{1}{2^*_{s}}{(u_+)}^{2^*_{s}}+\frac{\mu}{2^*_{s}-1}u_0 {(u_+)}^{2^*_{s} -1}.
\end{equation}
Then, \eqref{estimatesG2} jointly with the inequality (3.28) of \cite{colorado}, instead of \eqref{estimatesOld22}, and arguing in a similar way as above, finish the proof.

\end{proof}
To complete the existence of the second solution, that is statement $(4)$
in Theorem \ref{lapfra0q}, in view of the previous results, we look for a path with energy below the critical level $c^*$. Let us fix $\lambda\in (0, \Lambda)$. We consider $M_\varepsilon>0$ large enough so that $\widetilde{\mathcal J}_{s,\,\lambda}(M_\varepsilon \eta_\varepsilon)<\widetilde{\mathcal J}_{s,\,\lambda}(0)$.  Note that such $M_\varepsilon$ exists, since ${\displaystyle \lim_{t\to\infty}\widetilde{\mathcal J}_{s,\,\lambda}(t\eta_\varepsilon)=-\infty}$. Also, by Lemma \ref{lemma:minimo}, there exists $\alpha>0$ such that if $\|u\|_{X_0^{s}(\Omega)}=\alpha$, then $\widetilde{\mathcal J}_{s,\,\lambda}(u)\geq \widetilde{\mathcal J}_{s,\,\lambda}(0)$.
We define
$$
\Gamma_\varepsilon=\{ \gamma\in\mathcal{C}([0,1],X_0^s(\Omega)) :\: \gamma(0)=0,\, \gamma(1)=M_\varepsilon \eta_\varepsilon\},
$$
and the minimax value
\begin{equation}\label{cee}
c_\varepsilon=\inf_{\gamma\in \Gamma_\varepsilon}\sup_{0\le t\le 1}\widetilde{\mathcal J}_{s,\,\lambda}(\gamma(t)).
\end{equation}
By the arguments above, $c_\varepsilon\ge\widetilde{\mathcal J}_{s,\,\lambda}(0)$. Also, by Lemma \ref{underlevel}, for $\varepsilon\ll 1$ we obtain that
$$c_\varepsilon\le \sup_{0\le t\le 1}\widetilde{\mathcal J}_{s,\,\lambda}(t M_{\varepsilon}\eta_\varepsilon)=\sup_{t\ge 0}\widetilde{\mathcal J}_{s,\,\lambda}(t \eta_\varepsilon)<c^*.$$
Therefore, by Lemma \ref{strongly_convergent} and the MPT \cite{ar} if $c_\varepsilon>\widetilde{\mathcal J}_{s,\,\lambda}(0)$, or the corresponding refinement given in \cite{GhP} if the minimax level is equal to $\widetilde{\mathcal J}_{s,\,\lambda}(0)$, we obtain the existence of a non-trivial solution of $(\widetilde{P}_{\lambda})$, provided $u\equiv 0$ is its unique solution. Of course this is a contradiction. Thus, $\widetilde{\mathcal J}_{s,\,\lambda}$ admits a critical point $\tilde u$ different from the trivial function. As a consequence, $u=u_0+\tilde u$ is a solution, different of $u_0$, of problem $(P_{\lambda})$. This concludes the proof of Theorem~\ref{lapfra0q}.

\section{The critical and convex case $q>1$}\label{sec:q>1}
In this section we discuss the problem $(P_{\lambda})$ in the convex setting~$q>1$. Here, we argue essentially as in \cite{sY, sBNRES, servadeivaldinociBN, servadeivaldinociBNLOW, servadeivaldinociCFP}, where the authors studied the linear case $q=1$ using again variational techniques. With respect to the case $q=1$, there are some extra difficulties to prove the $(PS)_c$ condition and to obtain the estimates of the Mountain Pass critical value.
First of all it is easy to check the good geometry of the functional. That is we have the following.
\begin{proposition}\label{MPG}
Assume $\lambda>0$ and $1<q<2^{*}_{s}-1$. Then, there exist $\alpha>0$ and $\beta>0$ such that
\begin{itemize}
\item[a)]{for any $u\in X^{s}_0(\Omega)$ with
$||u||_{X^{s}_0(\Omega)}=\alpha$  one has that $\mathcal J_{s,\,\lambda}(u)\geq\beta$,}
\item[b)]{there exists a positive function $e\in X^{s}_0(\Omega)$ so that $||e||_{X^{s}_0(\Omega)}>\alpha$ and $\mathcal J_{s,\,\lambda}(e)<\beta$.}
\end{itemize}
\end{proposition}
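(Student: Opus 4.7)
The plan is to show (a) by treating the two nonlinearities as lower‐order perturbations of the quadratic part on small spheres, and (b) by exploiting that the exponent $2^\ast_s$ dominates the quadratic term along any ray.

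For part (a), I would first use the fractional Sobolev embedding $X_0^s(\Omega)\hookrightarrow L^p(\Omega)$, valid for every $1\le p\le 2^\ast_s$ since $\Omega$ is bounded (see \cite[Theorem 7.58]{adams} together with \eqref{motorista} and \eqref{Ss}). This produces constants $C_1,C_2>0$ such that
\begin{equation*}
\int_\Omega (u_+)^{q+1}\,dx \le \|u\|_{L^{q+1}(\Omega)}^{q+1}\le C_1\,\|u\|_{X_0^s(\Omega)}^{q+1},\qquad \int_\Omega (u_+)^{2^\ast_s}\,dx\le C_2\,\|u\|_{X_0^s(\Omega)}^{2^\ast_s}.
\end{equation*}
Here I am crucially using that $q+1<2^\ast_s$, which is exactly the assumption $q<2^\ast_s-1$. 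Plugging these bounds into the definition of $\mathcal{J}_{s,\lambda}$ gives, for every $u$ with $\|u\|_{X_0^s(\Omega)}=\alpha$,
\begin{equation*}
\mathcal{J}_{s,\lambda}(u)\ge \alpha^2\!\left(\tfrac{1}{2}-\tfrac{\lambda C_1}{q+1}\,\alpha^{q-1}-\tfrac{C_2}{2^\ast_s}\,\alpha^{2^\ast_s-2}\right).
\end{equation*}
Since $q>1$ and $2^\ast_s>2$, both correction terms are $o(1)$ as $\alpha\to 0^+$, so I can fix $\alpha>0$ small enough (depending on $\lambda$, $n$, $s$, $q$ and $|\Omega|$) that the parenthesis is $\ge 1/4$. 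Setting $\beta:=\alpha^2/4>0$ finishes (a).

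For part (b), I would pick any fixed nonnegative $\varphi\in X_0^s(\Omega)\setminus\{0\}$ (for instance a smooth nonnegative bump compactly supported in $\Omega$, which lies in $X_0^s(\Omega)$) and examine the map $t\mapsto \mathcal{J}_{s,\lambda}(t\varphi)$ for $t>0$. Since $\varphi\ge 0$, $(t\varphi)_+=t\varphi$, and
\begin{equation*}
\mathcal{J}_{s,\lambda}(t\varphi)=\frac{t^2}{2}\|\varphi\|_{X_0^s(\Omega)}^2-\frac{\lambda t^{q+1}}{q+1}\int_\Omega \varphi^{q+1}\,dx-\frac{t^{2^\ast_s}}{2^\ast_s}\int_\Omega \varphi^{2^\ast_s}\,dx.
\end{equation*}
Because $2^\ast_s>2$ and $\int_\Omega\varphi^{2^\ast_s}>0$, the last term dominates and $\mathcal{J}_{s,\lambda}(t\varphi)\to -\infty$ as $t\to\infty$. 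Choosing $t_0$ so large that simultaneously $t_0\|\varphi\|_{X_0^s(\Omega)}>\alpha$ and $\mathcal{J}_{s,\lambda}(t_0\varphi)<\beta$ (in fact one can force it to be negative), the function $e:=t_0\varphi$ does the job.

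The two parts are essentially routine; the only point that deserves attention is checking that both exponents $q+1$ and $2^\ast_s$ of the perturbations strictly beat the quadratic term at the right end (small $\alpha$ for (a), large $t$ for (b)), which is exactly where the hypothesis $1<q<2^\ast_s-1$ is used. No obstacle beyond this standard bookkeeping is anticipated.
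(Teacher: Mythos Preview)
Your proof is correct and follows essentially the same approach as the paper: for (a) you use the fractional Sobolev embedding to bound $\mathcal J_{s,\lambda}$ from below by a polynomial in $\|u\|_{X_0^s(\Omega)}$ and exploit $q>1$ and $2^*_s>2$ to make the nonlinear terms negligible for small $\alpha$; for (b) you evaluate the functional along a ray $t\varphi$ with $\varphi\geq 0$ and use $2^*_s>2$ to send it to $-\infty$. The only cosmetic difference is that you make the choice $\beta=\alpha^2/4$ explicit, whereas the paper simply asserts the existence of $\alpha$ with $g(\alpha)>0$.
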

\begin{proof}
\begin{itemize}
\item[a)]{By the Sobolev embedding theorem, since $q+1<2^{*}_{s}$, it can be easily seen that
$$\mathcal J_{s,\,\lambda}(u)\geq g(||u||_{X^{s}_0(\Omega)}),$$
where $g(t)=C_1t^2-\lambda C_2t^{q+1}- C_3t^{2^{*}_{s}}$, for some positive constants $C_1, C_2$ and $C_3$.
Therefore, there will exist $\alpha>0$ such that $\beta:=g(\alpha)>0$. Then, $\mathcal J_{s,\,\lambda}(u)\ge\beta$ for $u\in
X^{s}_0(\Omega)$ with $||u||_{X^{s}_0(\Omega)}=\alpha$.
}
\item[b)]{
Fix a positive function $u_0\in X^{s}_0(\Omega)$ such that $||u_0||_{X^{s}_0(\Omega)}=1$ and consider $t>0$. 
Since $2^{*}_{s}>2$, it follows that
$$\limm_{t\rightarrow\infty}\mathcal J_{s,\,\lambda}(t u_0)=-\infty.$$
Then, there exists $t_0$ large enough, such that for $e:=t_0u_0$, we get that $||e||_{X^{s}_0(\Omega)}>\alpha$ and $\mathcal J_{s,\,\lambda}(e)<\beta$.}
\end{itemize}
\end{proof}

By a similar argument, it follows that
\begin{equation}\label{limneg}
\limm_{t\rightarrow 0^+}\mathcal J_{s,\,\lambda}(t u_0)= 0.
\end{equation}

Let us check now that we have the compactness properties of~$\mathcal J_{s,\,\lambda}$\,.

\subsection{The Palais--Smale condition for~$\mathcal J_{s,\,\lambda}$}\label{sec:compactness}
In this subsection we show that the functional~$\mathcal J_{s,\,\lambda}$ satisfies the Palais--Smale condition in a suitable energy range involving the best fractional critical Sobolev constant~$S(n,s)$ given in \eqref{Ss}, that is we prove the following.

\begin{proposition}\label{propPS}
Let $\lambda>0$ and $1<q<2^{*}_{s}-1$.

Then, the functional~$\mathcal J_{s,\,\lambda}$ satisfies the (PS)$_{c_2}$ condition provided $c_2<c^*$\,, where $c^*$ is given in \eqref{cstar}.
\end{proposition}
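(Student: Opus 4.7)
The plan is to follow the Brezis--Nirenberg compactness scheme, adapted to the fractional framework in the spirit of \cite{sBNRES, servadeivaldinociBN}. Let $\{u_m\}\subset X_0^s(\Omega)$ be a Palais--Smale sequence at level $c_2<c^*$, that is $\mathcal{J}_{s,\lambda}(u_m)\to c_2$ and $\mathcal{J}_{s,\lambda}'(u_m)\to 0$ in the dual of $X_0^s(\Omega)$. First I would prove that $\{u_m\}$ is bounded by computing
$$\mathcal{J}_{s,\lambda}(u_m)-\frac{1}{q+1}\langle \mathcal{J}_{s,\lambda}'(u_m),u_m\rangle=\left(\frac{1}{2}-\frac{1}{q+1}\right)\|u_m\|_{X_0^s(\Omega)}^2+\left(\frac{1}{q+1}-\frac{1}{2^*_s}\right)\int_{\Omega}(u_{m,+})^{2^*_s}\,dx.$$
Since $1<q<2^*_s-1$, both coefficients are strictly positive, so $\|u_m\|_{X_0^s(\Omega)}\le C$. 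Up to a subsequence, $u_m\rightharpoonup u$ weakly in $X_0^s(\Omega)$, strongly in $L^r(\Omega)$ for every $1\le r<2^*_s$, and a.e.\ in $\Omega$. Passing to the limit in the Euler--Lagrange identity shows that $u$ is a weak solution of $(P_{\lambda}^+)$, hence by testing with $u$ itself and using $q>1$ one gets
$$\mathcal{J}_{s,\lambda}(u)=\lambda\left(\frac{1}{2}-\frac{1}{q+1}\right)\int_{\Omega}(u_+)^{q+1}\,dx+\frac{s}{n}\int_{\Omega}(u_+)^{2^*_s}\,dx\ge 0.$$

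Next I would carry out a Brezis--Lieb decomposition with $v_m:=u_m-u$. The Hilbert-space identity
$$\|u_m\|_{X_0^s(\Omega)}^2=\|u\|_{X_0^s(\Omega)}^2+\|v_m\|_{X_0^s(\Omega)}^2+o(1)$$
follows from weak convergence, while the classical Brezis--Lieb lemma, applied to $u_{m,+}\to u_+$ a.e.\ with the uniform $L^{2^*_s}$-bound provided by \eqref{Ss}, yields
$$\int_{\Omega}(u_{m,+})^{2^*_s}\,dx=\int_{\Omega}(u_+)^{2^*_s}\,dx+\int_{\Omega}(v_{m,+})^{2^*_s}\,dx+o(1).$$
The subcritical term $\int_{\Omega}(u_{m,+})^{q+1}\,dx$ converges to $\int_{\Omega}(u_+)^{q+1}\,dx$ by compact embedding. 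Subtracting the critical-point relation $\langle\mathcal{J}_{s,\lambda}'(u),u\rangle=0$ from $\langle\mathcal{J}_{s,\lambda}'(u_m),u_m\rangle=o(1)$ and using these splittings produces the fundamental balance
$$\|v_m\|_{X_0^s(\Omega)}^2-\int_{\Omega}(v_{m,+})^{2^*_s}\,dx=o(1).$$

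Finally I would run the Sobolev dichotomy. Let $a$ and $b$ denote (along a further subsequence) the limits of the two quantities above, so that $a=b$. The Sobolev inequality \eqref{Ss} gives $a\ge S(n,s)\,b^{2/2^*_s}$, whence either $b=0$ or $b\ge S(n,s)^{n/2s}$. Inserting the same splittings into the energy expansion yields
$$c_2=\mathcal{J}_{s,\lambda}(u)+\frac{1}{2}a-\frac{1}{2^*_s}b+o(1)=\mathcal{J}_{s,\lambda}(u)+\frac{s}{n}\,a,$$
so if $b\neq 0$ then, combining with $\mathcal{J}_{s,\lambda}(u)\ge 0$, one has $c_2\ge \frac{s}{n}S(n,s)^{n/2s}=c^*$, contradicting $c_2<c^*$. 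Hence $b=0$, so $a=0$, and $u_m\to u$ strongly in $X_0^s(\Omega)$.

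The main technical point will be making the Brezis--Lieb decomposition of the critical term rigorous in this nonlocal setting, handling the positive part via the a.e.\ convergence of $u_m$ and the uniform $L^{2^*_s}$-bound. The other delicate ingredient is the nonnegativity $\mathcal{J}_{s,\lambda}(u)\ge 0$, which is precisely what makes the simple energy bound $c_2<c^*$ sufficient to rule out concentration; this is exactly where the convex hypothesis $q>1$ enters and marks the structural difference with the concave case treated in Section~\ref{sec:q<1}, where the analogous quantity $\mathcal{J}_{s,\lambda}(u_0)$ is genuinely negative and the separate arguments of Lemma~\ref{strongly_convergent} and Lemma~\ref{underlevel} were required.
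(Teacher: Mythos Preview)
Your proposal is correct and follows essentially the same Brezis--Nirenberg compactness scheme as the paper's proof: boundedness of the (PS)$_{c_2}$--sequence, identification of the weak limit as a critical point with $\mathcal J_{s,\lambda}(u)\ge 0$ (using $q>1$), Brezis--Lieb splitting of the norm and of the critical term, the balance relation, and the Sobolev dichotomy to exclude concentration. The only cosmetic difference is that the paper writes the Brezis--Lieb remainder as $\int_\Omega|(u_m)_+-(u_\infty)_+|^{2^*_s}$ rather than your $\int_\Omega(v_{m,+})^{2^*_s}$; note that strictly speaking the latter does not come from Brezis--Lieb applied to $u_{m,+}$ but from the generalized Brezis--Lieb lemma applied to $u_m$ with $j(t)=(t_+)^{2^*_s}$, although the two remainders agree up to $o(1)$ and the subsequent argument is unaffected.
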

\begin{proof}
Let $\{u_m\}$ be a (PS)$_{c_2}$--sequence for $\mathcal J_{s,\,\lambda}$ in $X_0^s(\Omega)$, that is
\begin{equation}\label{Jc0}
\mathcal J_{s,\,\lambda}(u_m)\to c_2
\end{equation}
and
\begin{equation}\label{J'00}
\mathcal J_{s,\,\lambda}'(u_m)\to 0.
\end{equation}
First of all we get that $\{u_m\}$ is bounded in $X_0^s(\Omega)$. Indeed by \eqref{Jc0} and \eqref{J'00}, there exists $M>0$ such that
\begin{equation}\label{acot}
\|u_m\|_{X_0^s(\Omega)}\leq M.
\end{equation}

In order to prove our result we proceed by steps.

\medskip

\begin{claim}\label{claim3}
There exists $u_\infty\in X_0^s(\Omega)$ such that $\langle \mathcal J_{s,\,\lambda}'(u_\infty), \varphi\rangle=0$ for any $\varphi \in X_0^s(\Omega)$\,.
\end{claim}
\begin{proof}
By \eqref{acot} and the fact that $X_0^s(\Omega)$ is a reflexive space, up to a
subsequence, still denoted by $u_m$, there exists $u_\infty \in X_0^s(\Omega)$
such that~$u_m\rightharpoonup u_\infty$ weakly in~$X_0^s(\Omega)$, that is
\begin{equation}\label{convergenze0}
\begin{aligned}
 & \int_{\RR^n\times \RR^n}\frac{\big(u_m(x)-u_m(y)\big)\big(\varphi(x)-\varphi(y)\big)}{|x-y|^{n+2s}}\,dx\,dy \to \\
& \qquad \qquad
\int_{\RR^n\times \RR^n}\frac{\big(u_\infty(x)-u_\infty(y)\big)\big(\varphi(x)-\varphi(y)\big)}{
|x-y|^{n+2s}}\,dx\,dy  \quad \mbox{for any}\,\, \varphi\in X_0^s(\Omega).
\end{aligned}
\end{equation}
Moreover, we have
\begin{eqnarray}
\displaystyle u_{m}&\rightharpoonup& u_{\infty} \qquad
\mbox{ weakly in } L^{2^*_s}(\Omega), \label{convergenzaweak}\\
\displaystyle u_{m}&\to& u_{\infty} \qquad
\mbox{ strongly in } L^{r}(\Omega), \quad  \, 1\leq r < 2^*_{s} \label{convergenze0bis},\\
\displaystyle u_{m}&\to& u_{\infty} \qquad
\mbox{ a.e. in
} \Omega.\label{convergenze0ter}
\end{eqnarray}
Hence, taking the limit when $m\to\infty$, by \eqref{J'00}, \eqref{convergenze0}-\eqref{convergenze0ter} we conclude
\begin{eqnarray*}
\int_{\RR^n\times \RR^n}\frac{\left(u_\infty(x)-u_\infty(y)\right) \left(\varphi(x)-\varphi(y)\right)}{|x-y|^{n+2s}}\,dx\,dy&=&\lambda \int_\Omega {((u_\infty)_+)}^q\varphi\,dx\\
&+& \int_\Omega {((u_\infty)_+)}^{2^*_{s}-1}\varphi\,dx,
\end{eqnarray*}
for any $\varphi \in X_0^s(\Omega)$.
\end{proof}

\begin{claim}\label{step4}
The following equality holds:
$$\mathcal J_{s,\,\lambda}(u_m)  =\mathcal J_{s,\,\lambda}(u_\infty) + \frac 1 2 \|u_m-u_{\infty}\|^{2}_{X_0^s(\Omega)}-\frac{1}{2^*_{s}}\int_\Omega |{(u_m)_+}(x)-{(u_\infty)_+}(x)|^{2^*_{s}}dx+\textit{o}(1).
$$
\end{claim}

\begin{proof}
First of all, we observe that by \eqref{acot} and the Sobolev embedding theorem, the sequence $u_m$ is bounded in $X_0^s(\Omega)$ and in $L^{2^*_{s}}(\Omega)$. Hence, since \eqref{convergenze0bis} and \eqref{convergenze0ter} hold true, by the Brezis-Lieb Lemma (see \cite[Theorem~1]{bl}), we get
\begin{equation}\label{bl1}
\|u_m\|^2_{X_0^s(\Omega)}=\|u_m-u_{\infty}\|^{2}_{X_0^s(\Omega)}+\|u_\infty\|^2_{X_0^s(\Omega)}+ \textit{o}(1),
\end{equation}
\begin{equation}\label{bl2}
\int_\Omega|{(u_m)_+}|^{2^*_{s}}\,dx= \int_\Omega |{(u_m)_+}(x)-{(u_\infty)_+}(x)|^{2^*_{s}}\,dx+ \int_\Omega|{(u_\infty)_+}|^{2^*_{s}}\,dx+ \textit{o}(1)
\end{equation}
and
\begin{equation}\label{convq+1}
\|{(u_m)_+}\|_{L^{q+1}(\Omega)}\to \|{(u_\infty)_+}\|_{L^{q+1}(\Omega)}.
\end{equation}
Therefore, by \eqref{bl1}, \eqref{bl2} and \eqref{convq+1} we deduce that
\begin{eqnarray*}
\mathcal J_{s,\,\lambda}(u_m)&=&\frac 1 2 \|u_m-u_{\infty}\|^{2}_{X_0^s(\Omega)}+ \frac 1 2\|u_{\infty}\|^{2}_{X_0^s(\Omega)}\\
&-&\frac{\lambda}{q+1} \int_\Omega {((u_\infty)_+)}^{q+1} dx-\frac{1}{2^*_{s}}\int_\Omega |{(u_m)_+}(x)-{(u_\infty)_+}(x)|^{2^*_{s}}\,dx\\
&-&\frac{1}{2^*_{s}} \int_\Omega {((u_\infty)_+)}^{2^*_{s}}\,dx+\textit{o}(1)\\
&=&\mathcal J_{s,\,\lambda}(u_\infty)+\frac 1 2 \|u_m-u_{\infty}\|^{2}_{X_0^s(\Omega)}\\
&-&\frac{1}{2^*_{s}}\int_\Omega |{(u_m)_+}(x)-{(u_\infty)_+}(x)|^{2^*_{s}}\,dx+\textit{o}(1),
\end{eqnarray*}
which gives the desired assertion.
\end{proof}

\begin{claim}\label{step5}
The following estimate holds:
\begin{eqnarray*}
\|u_m-u_{\infty}\|^{2}_{X_0^s(\Omega)} &=& \int_\Omega |{(u_m)_+}(x)-{(u_\infty)_+}(x)|^{2^*_{s}}dx+\textit{o}(1)\\
&\leq&\int_\Omega |(u_m)(x)-(u_\infty)(x)|^{2^*_{s}}dx+\textit{o}(1).
\end{eqnarray*}
\end{claim}

\begin{proof}
Note that, as a consequence of \eqref{convergenzaweak} and \eqref{bl2}, we get
\begin{eqnarray}
&&\int_\Omega\left({((u_m)_+)}^{2^*_{s}-1}(x)-{((u_\infty)_+)}^{2^*_{s}-1}(x)\right)\left(u_m(x)-u_\infty(x)\right)\,dx\nonumber\\
&=& \int_\Omega {((u_m)_+)}^{2^*_{s}}\,dx -\int_\Omega {((u_\infty)_+)}^{2^*_{s}-1}u_m\,dx\nonumber\\
&-&\int_\Omega {((u_m)_+)}^{2^*_{s}-1}u_\infty\,dx+ \int_\Omega  {((u_\infty)_+)}^{2^*_{s}}\,dx\nonumber\\
&=& \int_\Omega {((u_m)_+)}^{2^*_{s}}\,dx- \int_\Omega  {((u_\infty)_+)}^{2^*_{s}}\,dx + \textit{o}(1)\nonumber\\
&=& \int_\Omega |{(u_m)_+}(x)-{(u_\infty)_+}(x)|^{2^*_{s}}\,dx+ \textit{o}(1).\label{ujuinfty}
\end{eqnarray}
Furthermore, \eqref{convergenze0bis} and \eqref{convq+1} give
\begin{eqnarray}
&&\int_\Omega\left({((u_m)_+)}^{q}(x)-{((u_\infty)_+)}^{q}(x)\right)\left(u_m(x)-u_\infty(x)\right)\,dx\nonumber\\
&=&\int_\Omega {((u_m)_+)}^{q+1}\,dx -\int_\Omega {((u_\infty)_+)}^{q}u_m\,dx\nonumber\\
&-&\int_\Omega {((u_m)_+)}^{q}u_\infty\,dx+ \int_\Omega  {((u_\infty)_+)}^{q+1}\,dx\nonumber\\
&=&\textit{o}(1).\label{ujuinftyq}
\end{eqnarray}
Then, by \eqref{J'00}, Claim~\ref{claim3}, \eqref{ujuinfty} and \eqref{ujuinftyq}, we conclude that
\begin{eqnarray*}
\textit{o}(1)&=&\langle\mathcal J'_{s,\,\lambda}(u_m), u_m-u_\infty\rangle\\
&=&\langle\mathcal J'_{s,\,\lambda}(u_m)-\mathcal J'_{s,\,\lambda}(u_\infty), u_m-u_\infty\rangle\\
&=&\|u_m-u_{\infty}\|^{2}_{X_0^s(\Omega)}-\lambda\int_\Omega\left({((u_m)_+)}^{q}(x)-{((u_\infty)_+)}^{q}(x)\right)(u_m(x)-u_\infty(x))\,dx\\
&-&\int_\Omega\left({((u_m)_+)}^{2^*_{s}-1}(x)-{((u_\infty)_+)}^{2^*_{s}-1}(x)\right)(u_m(x)-u_\infty(x))\,dx\\
&=& \|u_m-u_{\infty}\|^{2}_{X_0^s(\Omega)}- \int_\Omega |{((u_m)_+)}(x)-{((u_\infty)_+)}(x)|^{2^*_{s}}\,dx+ \textit{o}(1).
\end{eqnarray*}
\end{proof}

Now, we can finish the proof of Proposition~\ref{propPS}\,.

\medskip

By Claim~\ref{step5} we know that
\begin{equation}\label{sstu}
\frac{1}{2}\|u_m-u_{\infty}\|^{2}_{X_0^s(\Omega)}-\frac{1}{2^*_{s}}\int_\Omega |{((u_m)_+)}(x)-{((u_\infty)_+)}(x)|^{2^*_{s}}dx =\frac{s}{n}\|u_m-u_{\infty}\|^{2}_{X_0^s(\Omega)}+\textit{o}(1).
\end{equation}
Then,  by \eqref{Jc0}, Claim~\ref{step4} and \eqref{sstu} we obtain
\begin{equation}\label{bo}
\mathcal J_{s,\,\lambda}(u_\infty)+\frac s n\|u_m-u_{\infty}\|^{2}_{X_0^s(\Omega)}=\mathcal J_{s,\,\lambda}(u_m) +\textit{o}(1)=c_2+\textit{o}(1).
\end{equation}
On the other hand, by \eqref{acot}, up to a subsequence, we can assume that
\begin{equation}\label{L1}
\|u_m-u_\infty\|_{X_0^s(\Omega)}^2\to L\geq0,
\end{equation}
and then, as a consequence of Claim~\ref{step5},
$$\int_\Omega |u_m(x)-u_\infty(x)|^{2^*_{s}}\,dx\to {\tilde{L}\geq}L.$$
By the definition of $S(n,s)$ given in \eqref{Ss}, we have
$$L\geq S(n,s){\tilde{L}}^{2/2^*_{s}}{\geq S(n,s)L^{2/2^*_{s}}}\,,$$
so that
$$L=0 \quad \mbox{or} \quad L\geq S(n,s)^{\frac{n}{2s}}\,.$$
We now prove that the case $L\geq S(n,s)^{\frac{n}{2s}}$ can not occur. Indeed taking $\varphi=u_\infty\in X_0^s(\Omega)$ as a test function in Claim~\ref{claim3}, we have that
$$\|u_{\infty}\|^{2}_{X_0^s(\Omega)} =\lambda \int_\Omega
{((u_\infty)_+)}^{q+1}\,dx+ \int_\Omega {((u_\infty)_+)}^{2^*_{s}}dx.$$
That is,
\begin{equation}\label{step3}
\mathcal J_{s,\,\lambda}(u_\infty)=\lambda\left(\frac 1 2 -\frac{1}{q+1}\right)\|{((u_\infty)_+)}\|_{L^{q+1}(\Omega)}^{q+1}+\frac s n\,\|{((u_\infty)_+)}\|_{L^{2^*_{s}}(\Omega)}^{2^*_{s}}\geq 0\,,
\end{equation}
thanks to the positivity of $\lambda$ and the fact that $q>1$\,.
Therefore, if $L\geq S(n,s)^{\frac{n}{2s}}$, then, by \eqref{bo}, \eqref{L1} and \eqref{step3} we get
$$c_2=\mathcal J_{s,\,\lambda}(u_\infty)+\frac s n\,L \geq \frac s n\,L \geq \frac s n\, S(n,s)^{\frac{n}{2s}}\,,$$
which contradicts the fact that $c_2<c^*$, for the $c^*$  given in~\eqref{cstar}\,. Thus $L=0$ and so, by \eqref{L1}, we obtain that
$$\|u_m-u_\infty\|_{X_0^s(\Omega)}\to 0.$$
\end{proof}
\begin{remark}
Note that the proof of Proposition~\ref{propPS} could be also obtained by the concentration-compactness theory of Subsection \ref{sec:compactness1}. This simply means that the arguments performed in the last part of the proof of Lemma~\ref{strongly_convergent} can be adapted to the convex setting.
\end{remark}

\subsection{Proof of Theorem~\ref{lapfra0}}\label{sec:dimtheorem1}
By Proposition \ref{MPG} and \eqref{limneg} we get that
$\mathcal J_{s, \,\lambda}$ satisfies the geometric features required by the MPT (see \cite{ar}). Moreover, by Proposition~\ref{propPS} the functional~$\mathcal J_{s,\,\lambda}$ verifies the Palais--Smale condition at any level $c$, provided $c<c^*$.

Now, as in the concave case, we find a path with energy below the critical level $c^*$. That is, we have the following.
\begin{lemma}\label{underlevel2}
Let $\lambda>0$, $c^*$ be as in \eqref{cstar} and $\eta_\varepsilon$ be the {non negative} function defined in \eqref{etaeps}.
Then, there exists $\varepsilon
>0$ small enough such that
$$\sup_{t\geq 0}\mathcal J_{s,\,\lambda}(t\eta_{\varepsilon}) <c^*\,,$$
provided
\begin{itemize}
\item $n>\frac{2s(q+3)}{q+1}$ and $\lambda>0$ or
\item $n\leq\frac{2s(q+3)}{q+1}$ and $\lambda>\lambda_s$, for a suitable $\lambda_s>0$.
\end{itemize}
\end{lemma}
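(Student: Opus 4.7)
The plan is to reduce the problem to the one-dimensional optimization of
\[
g_\varepsilon(t) := \mathcal{J}_{s,\lambda}(t\eta_\varepsilon) = \frac{A_\varepsilon}{2}\,t^{2} - \frac{\lambda B_\varepsilon}{q+1}\,t^{q+1} - \frac{1}{2^*_s}\,t^{2^*_s},
\]
where $A_\varepsilon := \|\eta_\varepsilon\|^{2}_{X_0^s(\Omega)}$, $B_\varepsilon := \|\eta_\varepsilon\|^{q+1}_{L^{q+1}(\Omega)}$, and the $L^{2^*_s}$-coefficient equals $1$ by the very construction \eqref{etaeps}. This follows the Brezis--Nirenberg scheme of \cite{bn} adapted to the nonlocal setting. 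The estimate \eqref{estimatesOld} already furnishes $A_\varepsilon \le S(n,s) + C\varepsilon^{n-2s}$, so the first task is a sharp asymptotic for $B_\varepsilon$ as $\varepsilon \to 0^+$, obtained via the change of variables $x = \varepsilon y$ applied to $\phi u_\varepsilon$, in the spirit of \cite[Proposition~22]{servadeivaldinociBN} (or of \cite[Lemma~3.8]{colorado}). Three regimes appear according to the sign of $(n-2s)(q+1) - n$: $B_\varepsilon$ scales like $\varepsilon^{\,n - (n-2s)(q+1)/2}$, like $\varepsilon^{n/2}|\log\varepsilon|$, or like $\varepsilon^{(n-2s)(q+1)/2}$, respectively.

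Next, let $t_\varepsilon > 0$ be the maximizer of $g_\varepsilon$, characterized by the Euler equation $A_\varepsilon - t_\varepsilon^{2^*_s-2} = \lambda B_\varepsilon t_\varepsilon^{q-1}$; in particular $t_\varepsilon \le t_0 := A_\varepsilon^{1/(2^*_s-2)}$. Since $F(t) := \tfrac{A_\varepsilon}{2}t^{2} - \tfrac{1}{2^*_s}t^{2^*_s}$ is increasing on $[0,t_0]$, I would obtain
\[
\sup_{t\ge 0} g_\varepsilon(t) \;=\; F(t_\varepsilon) - \frac{\lambda B_\varepsilon t_\varepsilon^{q+1}}{q+1} \;\le\; \frac{s}{n}A_\varepsilon^{n/(2s)} - \frac{\lambda B_\varepsilon t_\varepsilon^{q+1}}{q+1}.
\]
Combining with the elementary bound $A_\varepsilon^{n/(2s)} \le S(n,s)^{n/(2s)} + C_1 \varepsilon^{n-2s}$, the desired inequality $\sup g_\varepsilon < c^*$ reduces to
\[
\lambda\, B_\varepsilon\, t_\varepsilon^{q+1} \;>\; C_2\, \varepsilon^{n-2s}.
\]

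The third step is to keep $t_\varepsilon$ bounded away from $0$. As long as $\lambda B_\varepsilon \le K$ for a fixed constant $K$, the Euler equation confines $t_\varepsilon$ to a compact subset of $(0,\infty)$ independent of the parameters (for $\varepsilon$ small), so $t_\varepsilon \ge c_0 > 0$, and it then suffices to prove $\lambda B_\varepsilon > C_3 \varepsilon^{n-2s}$. In the first regime $n > 2s(q+3)/(q+1)$, a direct inspection of the three cases above shows that the leading exponent of $\varepsilon$ in $B_\varepsilon$ is strictly smaller than $n - 2s$; thus $B_\varepsilon/\varepsilon^{n-2s} \to +\infty$, and any $\lambda > 0$ works once $\varepsilon$ is small enough. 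In the complementary regime $n \le 2s(q+3)/(q+1)$, $B_\varepsilon$ vanishes at least as fast as $\varepsilon^{n-2s}$, so no choice of $\varepsilon$ alone can suffice: I would couple the two parameters by choosing $\varepsilon = \varepsilon(\lambda)$ so that $\lambda B_\varepsilon$ remains of order one (which forces $\varepsilon \to 0$ as $\lambda \to \infty$). This simultaneously preserves the lower bound on $t_\varepsilon$ and makes $\lambda B_\varepsilon / \varepsilon^{n-2s}$ diverge, yielding the conclusion for all $\lambda > \lambda_s$ with a suitable threshold $\lambda_s > 0$.

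The main obstacle I foresee is the joint control of the maximizer $t_\varepsilon$ against the two asymptotic scales $\varepsilon^{n-2s}$ and $B_\varepsilon$: letting $\lambda$ grow while keeping $\varepsilon$ fixed would eventually drive $t_\varepsilon$ to $0$ and spoil the argument, so the coupling $\varepsilon = \varepsilon(\lambda)$ in the subcritical dimensional regime is essential rather than merely convenient. Tracking the logarithmic factor in the borderline subcase $(n-2s)(q+1) = n$, and handling the subcase $(n-2s)(q+1) < n$ (which can occur only when $n < 4s$), both require a careful verification that these subcases are indeed absorbed by the condition $n \le 2s(q+3)/(q+1)$ and do not demand a separate analysis.
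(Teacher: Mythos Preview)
Your treatment of the case $n>\tfrac{2s(q+3)}{q+1}$ is essentially the paper's proof: the same upper bound $A_\varepsilon\le S(n,s)+C\varepsilon^{n-2s}$, the same lower bound on $B_\varepsilon$, the same monotonicity argument for $F(t)$, and the same comparison of exponents. One simplification you missed: since $q>1$ and (in this case) $n>\tfrac{2s(q+3)}{q+1}\ge\tfrac{2s(q+1)}{q}$, one always has $(n-2s)(q+1)>n$, so only the first of your three regimes for $B_\varepsilon$ is ever relevant here; the paper accordingly treats just that one.

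For the case $n\le\tfrac{2s(q+3)}{q+1}$ your route diverges from the paper's, and the paper's is shorter. You propose coupling $\varepsilon=\varepsilon(\lambda)$ so that $\lambda B_\varepsilon$ stays of order one, which keeps $t_\varepsilon$ bounded below and makes $\lambda B_\varepsilon t_\varepsilon^{q+1}\gg\varepsilon^{n-2s}$. This works. The paper instead \emph{fixes} $\varepsilon$ and lets $\lambda\to+\infty$: from the Euler relation $S(n,s)+C\varepsilon^{n-2s}=t_{\varepsilon,\lambda}^{2^*_s-2}+\tilde C\lambda\,t_{\varepsilon,\lambda}^{q-1}\varepsilon^{n-(n-2s)(q+1)/2}$ one reads off $t_{\varepsilon,\lambda}\to 0$, and then directly $g(t_{\varepsilon,\lambda})\le\tfrac{t_{\varepsilon,\lambda}^2}{2}(S(n,s)+C\varepsilon^{n-2s})\to 0<c^*$. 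So the very phenomenon you flagged as the ``main obstacle'' (the maximizer collapsing to zero) is precisely what the paper exploits; no coupling, no lower bound on $t_\varepsilon$, and no asymptotics of $B_\varepsilon$ are needed in this regime. Your approach has the merit of staying within the Brezis--Nirenberg exponent-comparison paradigm throughout, while the paper's buys a two-line argument at the cost of abandoning that paradigm in the low-dimensional case.
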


\begin{proof}
Let $n>\frac{2s(q+3)}{q+1}$.

First of all note that since $q>1$ we get that $\displaystyle n>2s\left(1+\frac{1}{q}\right)$. Therefore, denoting by $N:=-(n-(n-2s)(q+1))>0$, for some positive constants $c$ and $\tilde{C}$, it follows that
\begin{eqnarray}
\int_{\mathbb{R}^{n}}{\eta_{\varepsilon}(x)^{q+1}\, dx}&=&C\int_{|x|<r}{u_{\varepsilon}^{q+1}\, dx}\nonumber\\
&=&C\varepsilon^{\left(\frac{n-2s}{2}\right)(q+1)}\int_{|x|<r}{\frac{dx}{(|x|^{2}+\varepsilon^2)^{\frac{(n-2s)(q+1)}{2}}}}\nonumber\\
&=&C\varepsilon^{-\left(\frac{n-2s}{2}\right)(q+1)}\int_{0}^{r}{\frac{\rho^{n-1}}{\left(1+\left(\frac{\rho}{\varepsilon}\right)^2\right)^{\frac{(n-2s)(q+1)}{2}}}\, d\rho}\nonumber\\
&=&C\varepsilon^{n-\left(\frac{n-2s}{2}\right)(q+1)}\int_{0}^{r/\varepsilon}{\frac{t^{n-1}}{(1+t^2)^{\frac{(n-2s)(q+1)}{2}}}\, dt}\nonumber\\
&\geq&C\varepsilon^{n-\left(\frac{n-2s}{2}\right)(q+1)}\int_{1}^{r/\varepsilon}{t^{n-1-(n-2s)(q+1)}\, dt}\nonumber\\
&=&\frac{C\varepsilon^{n-\left(\frac{n-2s}{2}\right)(q+1)}}{N}\left(1-\left(\frac{\varepsilon}{r}\right)^{N}\right)\nonumber\\
&\geq&\tilde{C}\varepsilon^{n-\left(\frac{n-2s}{2}\right)(q+1)}.\label{estimatesOld222}
\end{eqnarray}
Then, by \eqref{estimatesOld} and \eqref{estimatesOld222} for any $t\geq 0$ and $\varepsilon>0$ small enough we obtain
\begin{eqnarray}
\mathcal J_{s, \,\lambda}(t\eta_{\varepsilon})&=&\frac{t^{2}}{2}\|\eta_{\varepsilon}\|_{X^{s}_{0}(\Omega)}^{2}-\frac{t^{2^*_{s}}}{2^*_{s}}-\lambda\frac{t^{q+1}}{q+1}\int_{\Omega}{\eta_{\varepsilon}^{q+1}\, dx}\nonumber\\
&\leq&\frac{t^{2}}{2}(S(n,s)+C\varepsilon^{n-2s})-\frac{t^{2^*_{s}}}{2^*_{s}}-\tilde{C}\lambda\frac{t^{q+1}}{q+1}\varepsilon^{n-\left(\frac{n-2s}{2}\right)(q+1)}=:g(t).\label{Jg2}
\end{eqnarray}
It is clear that
\[\lim_{t\rightarrow\infty}g(t)=-\infty,\]
therefore $\sup_{t\geq0}{g(t)}$ is attained at
some $t_{\varepsilon,\lambda}:=t_{\varepsilon}\geq0$. As we comment in the proof of Lemma \ref{underlevel} we could suppose $t_{\varepsilon}>0$. Differentiating $g(t)$ and equaling to zero,
we obtain that
\begin{equation}\label{difJ2}
0=g'(t_{\varepsilon})=
t_{\varepsilon}(S(n,s)+C\varepsilon^{n-2s})-t_{\varepsilon}^{2^*_{s}
-1}-\tilde{C}\lambda t_{\varepsilon}^{q}\varepsilon^{n-\left(\frac{n-2s}{2}\right)(q+1)}.
\end{equation}
Hence,
$$t_{\varepsilon}<(S(n,s)+C\varepsilon^{n-2s})^{\frac{1}{2^*_{s}-2}}.$$
Moreover, we have that for $\varepsilon>0$ small enough
\begin{equation}\label{teps1}
t_\varepsilon\ge c>0.\end{equation}
Indeed, from \eqref{difJ2} it follows that
$$
t_\varepsilon^{2^*_{s}-2}+\tilde{C}\lambda t_{\varepsilon}^{q-1}\varepsilon^{n-\left(\frac{n-2s}{2}\right)(q+1)}=S(n,s)+C\varepsilon^{n-2s}\geq c>0,\, \mbox{for $\varepsilon>0$ small enough}.
$$
Also, since the function
$$
t \mapsto
\frac{t^2}{2}(S(n,s)+C\varepsilon^{n-2s})
- \frac{t^{2^*_{s}}}{2^*_{s}}
$$
is increasing on
$[0,(S(n,s)+C\varepsilon^{n-2s})^{\frac{1}{2^*_{s}-2}}]$,
by \eqref{Jg2} and \eqref{teps1} we obtain
\begin{eqnarray}
\displaystyle \sup_{t\geq 0}
g(t)=g(t_{\varepsilon})&\leq&
\frac{s}{n}(S(n,s)+C\varepsilon^{n-2s})^{\frac{n}{2s}}-
\overline{C}\varepsilon^{n-\left(\frac{n-2s}{2}\right)(q+1)}\nonumber\\
&\leq&\frac{s}{n}S(n,s)^{\frac{n}{2s}}+C\varepsilon^{n-2s}-
\overline{C}\varepsilon^{n-\left(\frac{n-2s}{2}\right)(q+1)},\label{sympli3}
\end{eqnarray}
for some $\overline{C}>0$.
Finally, from our hypothesis on $\displaystyle n$, 
we conclude from \eqref{sympli3}  that
$$\sup_{t\geq0}\mathcal J_{s,\,\lambda}(t\eta_{\varepsilon})\leq g(t_{\varepsilon})<\frac s n S(n,s)^{\frac{n}{2s}}.$$

\medskip

Consider now the case $n\leq \frac{2s(q+3)}{q+1}$.
Arguing exactly as in the previous case, 
we get that
\begin{equation}\label{sympli4}
(S(n,s)+C\varepsilon^{n-2s})=t_{\varepsilon,\lambda}^{2^*_{s}
-2}+\tilde{C}\lambda t_{\varepsilon,\lambda}^{q-1}\varepsilon^{n-\left(\frac{n-2s}{2}\right)(q+1)},\end{equation}
with $t_{\varepsilon,\lambda}>0$ the point where the $\sup_{t\geq0}g(t)$ is attained.
We claim that
\begin{equation}\label{zetalambda0}
t_{\varepsilon,\lambda} \to 0\quad\mbox{when}\quad\lambda \to +\infty.
\end{equation}
To see this assume that $\displaystyle\varlimsup_{\lambda\to \infty}t_{\varepsilon,\lambda}=\ell>0$. Then, passing to the limit when $\lambda \to +\infty$ in \eqref{sympli4} we would get $(S(n,s)+C\varepsilon^{n-2s})=+\infty$, which is a contradiction and \eqref{zetalambda0} follows.
If we take now $\beta$ the positive number given in Proposition \ref{MPG}, by \eqref{zetalambda0} we obtain that
\begin{eqnarray}
0\leq \sup_{t\ge 0}\mathcal J_{s,\,\lambda}(t \eta_\varepsilon)&\leq&g(t_{\varepsilon,\lambda})\nonumber\\
&=&\frac{t_{\varepsilon,\lambda}^{2}}{2}(S(n,s)+C\varepsilon^{n-2s})-\frac{t_{\varepsilon, \lambda}^{2^*_{s}}}{2^*_{s}}-\widetilde{C}\lambda\frac{t_{\varepsilon, \lambda}^{q+1}}{q+1}\varepsilon^{n-\left(\frac{n-2s}{2}\right)(q+1)}\nonumber\\
&\leq&\frac{t_{\varepsilon, \lambda}^{2}}{2}(S(n,s)+C\varepsilon^{n-2s})-\frac{t_{\varepsilon, \lambda}^{2^*_{s}}}{2^*_{s}}\to 0,\nonumber
\end{eqnarray}
when $\lambda\to\infty$. Then,
$$\lim_{\lambda\to +\infty}\sup_{t\geq 0} \mathcal J_{s,\,\lambda}(t \eta_\varepsilon)=0,$$
which easily yields the desired conclusion for the case $n\leq \frac{2s(q+3)}{q+1}$.
\end{proof}

We conclude now the proof of Theorem \ref{lapfra0}. In order to do so, we define
$$
\Gamma_\varepsilon=\{ \gamma\in\mathcal{C}([0,1],X_0^s(\Omega)) :\: \gamma(0)=0,\, \gamma(1)=M_\varepsilon \eta_\varepsilon\}
$$
for some $M_\varepsilon>0$ big enough such that $\mathcal{J}_{s,\,\lambda}(M_\varepsilon \eta_\varepsilon)<0$. Observe that
for every $\gamma\in \Gamma_{\varepsilon}$ the function $t\to\|\gamma(t)\|_{X_{0}^{s}(\Omega)}$ is continuous in $[0,1]$. Therefore, for the $\alpha$ given in Proposition \ref{MPG}, since $\|\gamma(0)\|_{X_{0}^{s}(\Omega)}=0<\alpha$ and $\|\gamma(1)\|_{X_{0}^{s}(\Omega)}=\|M_{\varepsilon}\eta_{\varepsilon}\|_{X_{0}^{s}(\Omega)}>\alpha$ for  $M_{\varepsilon}$ sufficiently large, there exists $t_0\in(0,1)$ such that $\|\gamma(t_0)\|_{X_{0}^{s}(\Omega)}=\alpha$. As a consequence,
$$\sup_{0\le t\le 1} \mathcal J_{s,\,\lambda}(\gamma(t))\geq \mathcal J_{s,\,\lambda}(\gamma(t_{0}))\geq\inf_{\|v\|_{X_{0}^{s}(\Omega)}=\alpha} \mathcal J_{s,\,\lambda}(v)\geq\beta>0,$$
where $\beta$ is the positive value given in Proposition \ref{MPG}. Hence,
$$c_\varepsilon=\inf_{\gamma\in \Gamma_\varepsilon}\sup_{0\le t\le 1} \mathcal J_{s,\,\lambda}(\gamma(t))>0.$$
Then, by Lemma \ref{underlevel2}, Proposition \ref{propPS} and the MPT given in \cite{ar} we conclude that the functional~$\mathcal J_{s,\,\lambda}$ admits a critical point $u\in X_0^s(\Omega)$, provided
$n>\frac{2s(q+3)}{q+1}$ and $\lambda>0$ or $n\leq\frac{2s(q+3)}{q+1}$ and $\lambda>\lambda_s$, for a suitable $\lambda_s>0$.
Moreover, since $\mathcal J_{s,\,\lambda}(u)=c_{\epsilon}\geq \beta>0$ and $\mathcal J_{s,\,\lambda}(0)=0$, the function $u$ is not the trivial one. This concludes the proof of Theorem~\ref{lapfra0}.

\begin{remark}
Some of the results obtained in Section~\ref{sec:q<1} and Section~\ref{sec:q>1} are true for integrodifferential operators more general than the fractional Laplacian, such as, for instance, the ones considered in \cite{svmountain, servadeivaldinociBN}.
\end{remark}

\end{document}